\newtheorem{thm}{Theorem}[section]
\newtheorem{prop}[thm]{Proposition}
\newtheorem{rmk}[thm]{Remark}
\newtheorem{lem}[thm]{Lemma}
\newtheorem*{prob*}{Problem}
\newtheorem*{cor*}{Corollary}
\newtheorem*{thm*}{Theorem}
\theoremstyle{definition}
\theoremstyle{plain} 
\newcommand{\thistheoremname}{}
\newtheorem{genericthm}[thm]{\thistheoremname}
\newtheorem*{genericthm*}{\thistheoremname}
\newenvironment{namedthm*}[1]
{\renewcommand{\thistheoremname}{#1}%
\begin{genericthm*}}
{\end{genericthm*}}
\theoremstyle{remark}
\numberwithin{equation}{section}
\newcommand{\R}{\mathbb{R}}
\newcommand{\C}{\mathbb{C}}
\newcommand{\SO}{\mathrm{SO}}
\newcommand{\SU}{\mathrm{SU}}
\newcommand{\Sp}{\mathrm{Sp}}
\newcommand{\G}{\mathrm{G}}
\newcommand{\Hg}{\mathrm{H}}
\newcommand{\U}{\mathrm{U}}
\newcommand{\Spin}{\mathrm{Spin}}
\newcommand{\Ker}{\mathrm{Ker}}
\newcommand{\Hom}{\mathrm{Hom}}
\newcommand{\Ad}{\mathrm{Ad}}
\newcommand{\Sym}{\mathrm{Sym}}
\newcommand{\Z}{\mathbb{Z}}
\newcommand{\RP}{\mathbb{RP}}
\newcommand{\vol}{\mathrm{vol}}
\newcommand{\SFF}{\mathrm{I\!I}}
\begin{document}
	
	\title{The Morse Index of Quartic Minimal Hypersurfaces}
	
	\author{Gavin Ball}
	\address{\textsc{University of Wisconsin-Madison},
		\textsc{Department of Mathematics},
		\textsc{Madison, WI, USA}}
	\email{gball3@wisc.edu}
	\urladdr{https://www.gavincfball.com/}
	
	\author{Jesse Madnick}
	\address{\textsc{University of Oregon},
		\textsc{Department of Mathematics},
		\textsc{Eugene, WA, USA}}
	\email{jmadnick@uoregon.edu}
	\urladdr{https://sites.google.com/view/jesse-madnick}
	
	\author{Uwe Semmelmann}
	\address{\textsc{Institut f\"ur Geometrie und Topologie},
		\textsc{Fachbereich Mathematik},
		\textsc{Universit\"at Stuttgart}}
	\email{uwe.semmelmann@mathematik.uni-stuttgart.de}
	
	\begin{abstract} The homogeneous minimal hypersurfaces in $S^n$ have $g \in \{1,2,3,4,6\}$ distinct (constant) principal curvatures.  While the Morse index and nullity have been calculated for all such hypersurfaces having $g = 1,2,3$, it has remained an open problem to compute these quantities for any of those with $g = 4$ or $6$.
		
	In this paper, we calculate the Morse index and nullity of two homogeneous minimal hypersurfaces in $S^n$ with $g = 4$.  Moreover, we observe that their Laplace spectra contain irrational eigenvalues that are not expressible in radicals.
	\end{abstract}
	
	\maketitle

        \tableofcontents
	
	\section{Introduction}
	
	\subsection{Background and results}
	
	A minimal hypersurface $M^{n-1} \subset S^n$ is a critical point of the area functional: for every variation $(M_t)$ of $M$, the function $A(t) = \mathrm{Area}(M_t)$ satisfies $A'(0) = 0$.  The \emph{Morse index} and \emph{nullity} of $M$, denoted $\mathrm{Ind}(M)$ and $\mathrm{Nul}(M)$, are the number of infinitesimal variations of $M$ that satisfy $A''(0) < 0$ and $A''(0) = 0$, respectively.  Simons \cite{Simons68} observed that $\mathrm{Ind}(M) \geq 1$ and $\mathrm{Nul}(M) \geq n$, with equality holding in either case if and only if $M$ is a totally geodesic $S^{n-1}$. \\
	\indent Given a minimal hypersurface $M \subset S^n$ with unit normal vector field $\nu$, along with a variation $\iota_t \colon M_t \to S^n$ having variation vector field $\left.\frac{d}{dt}\right|_{t = 0} \iota_t = f\nu$ with $f \in C^\infty(M)$, the second variation formula states that
	\begin{equation*}
		A''(0) = \int_M f\,\mathcal{J}_M(f)\,\mathrm{vol}_M
	\end{equation*}
	where $\mathcal{J}_M(f) = -(\Delta_M + |\SFF|^2 + (n-1))f$ is the \emph{Jacobi operator} of $M$.  Here, $\Delta_M$ is the (analyst's) Laplacian of $M$, and $\SFF$ is the second fundamental form of $M$.  As a strongly elliptic operator, $\mathcal{J}_M$ may be diagonalized: its spectrum is an increasing sequence of real numbers $\mu_1 < \mu_2 < \cdots < \mu_s = 0 < \mu_{s+1} < \cdots \to \infty$ of finite multiplicities $\ell_1, \ell_2, \ldots$.  From this point of view, the Morse index and nullity are given by
	\begin{align*}
		\mathrm{Ind}(M) & = \ell_1 + \cdots + \ell_{s-1} & \mathrm{Nul}(M) & = \ell_s
	\end{align*}
	\indent The Morse index in particular is a fundamental invariant of minimal hypersurfaces, and is the subject of various conjectures.  Unfortunately, computing $\mathrm{Ind}(M)$ and $\mathrm{Nul}(M)$ is often impossible in actual practice.  As such, there are presently very few minimal hypersurfaces whose Morse index and nullity are known explicitly.  The ones that we do know help guide conjectures, placing bounds on what is reasonable to expect.  For example, it is known that every non-equatorial minimal hypersurface satisfies $\mathrm{Ind}(M) \geq n+2$, and it is conjectured that equality holds precisely for the Clifford hypersurfaces $S^p(\sqrt{ \frac{p}{n-1} }) \times S^q(\sqrt{ \frac{q}{n-1} })$ with $p+q = n-1$: see, e.g., \cites{Urbano90,Perdomo01}.\\
	
	\indent In view of this difficulty, it is natural to restrict attention to special classes of minimal hypersurfaces.  Perhaps the most natural class consists of the \emph{homogeneous} ones, those given as orbits of Lie groups.  In that case, $M$ has $g \in \{1,2,3,4,6\}$ distinct principal curvatures, the cone $\mathrm{C}(M) = \{rx \in \R^{n+1} \colon r > 0, x \in M\}$ is an algebraic variety of degree $g$ \cites{Munz1,Munz2}, and the Jacobi operator reduces to
	$$\mathcal{J}_M = -(\Delta_M + g(n-1)),$$
	Thus, for these hypersurfaces, the spectrum of $\mathcal{J}_M$ is a shift of that of $\Delta_M$. However, in general, computing the spectrum of $\Delta_M$ is itself highly non-trivial.  Indeed, whereas the spectrum of a Riemannian homogeneous space $\G/\mathrm{H}$ can in principle be computed when the metric is normal, the situation for non-normal metrics is considerably more difficult.   (The metrics induced on homogeneous minimal hypersurfaces of spheres are, in general, not normal.) \\
	\indent The homogeneous minimal hypersurfaces in $S^n$ were classified by Hsiang and Lawson \cite{HsiangLawson71}, falling into $5$ infinite families and $9$ sporadic exceptions: see Figure \ref{fig-HomogenousMinimal}.  The Morse index and nullity have been computed for all of those having $g = 1$, $2$ and $3$ principal curvatures: the case of $g = 1$ is due to Simons \cite{Simons68}, and the $g = 3$ case to Solomon \cites{solomon90i,solomon90ii}.  By contrast, calculation of the Morse index and nullity for all of the examples with $g = 4$ or $6$ has remained an open problem.  Our main results make progress in the $g=4$ case:

	\begin{thm}  Let $M^8 \subset S^9$ denote the $\SO(5)$-invariant minimal isoparametric hypersurface.  Then $M$ has Morse index $275$ and nullity $35$.  In particular, every Jacobi field on $M$ is rotational (i.e., generated by the $\SO(10)$-action).
	\end{thm}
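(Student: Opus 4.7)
The plan is to reduce the problem to analyzing the Laplace spectrum on the Peter--Weyl summands of $L^2(M)$ and to enumerate the eigenvalues below the cutoff $g(n-1)=32$. First, I would fix the homogeneous presentation: the $\SO(5)$-invariant minimal isoparametric hypersurface with $g=4$ in $S^9$ has multiplicities $(2,2)$ and is realized as the orbit of a regular element for the $\SO(5)$-action on $\Lambda^2 \R^5 \cong \R^{10}$, giving $M \cong \SO(5)/T^2$ with $T^2$ a maximal torus. The $T^2$-isotropy representation on $\mathfrak{m} = \mathfrak{so}(5)/\mathfrak{t}$ decomposes as the four real $2$-planes indexed by the positive roots of $B_2$, and the metric induced from $\Lambda^2 \R^5$ is diagonal with possibly unequal scale factors on these summands; I would compute these scales explicitly from the embedding.

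The bulk of the work is a Peter--Weyl analysis. We have
\begin{equation*}
L^2(M) = \bigoplus_{\gamma \in \widehat{\SO(5)}} V_\gamma \otimes (V_\gamma^*)^{T^2},
\end{equation*}
and $\SO(5)$-equivariance forces $\Delta_M$ to act on each summand as $\mathrm{Id}_{V_\gamma} \otimes A_\gamma$ for a symmetric operator $A_\gamma$ on the $T^2$-invariant subspace $(V_\gamma^*)^{T^2}$. In the normal-metric case $A_\gamma$ would reduce to the Casimir scalar $c_\gamma\cdot\mathrm{Id}$, but here $A_\gamma$ is a genuine symmetric matrix whose entries are explicit linear combinations of matrix coefficients of the $\mathfrak{so}(5)$-action on $V_\gamma$ weighted by the non-normal scale factors. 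Since the Casimir eigenvalue is a polynomial in the highest weight (by Freudenthal), only finitely many $\gamma$ can produce an eigenvalue $\leq 32$; for each such $\gamma$ I would compute $\dim (V_\gamma^*)^{T^2}$ as the zero-weight multiplicity of $V_\gamma$ and diagonalize $A_\gamma$ in a concrete basis of $T^2$-invariants.

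Summing $\dim V_\gamma$ over eigenvalues $\mu < 32$ would give $\mathrm{Ind}(M) = 275$, and summing over eigenvalues $\mu = 32$ gives $\mathrm{Nul}(M) = 35$. The main obstacle will be the diagonalization step for the larger representations: the abstract warns that some Laplace eigenvalues on $M$ are irrational and not expressible in radicals, so the matrices $A_\gamma$ have characteristic polynomials whose Galois groups are large enough to preclude closed-form roots, and correct counting at the threshold $\mu = 32$ will require exact arithmetic (e.g. interval arithmetic or resultant-based separation) rather than floating-point approximation.

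To conclude that every Jacobi field is rotational, I would identify the rotational fields as the image of the map $\mathfrak{so}(10) \to C^\infty(M)$, $X \mapsto \langle X, \nu\rangle$, whose kernel is the Lie algebra of Killing fields of $S^9$ tangent to $M$. This kernel contains the $\mathfrak{so}(5)$ acting on $\Lambda^2\R^5$, and a direct check using the rigidity of the isoparametric structure shows there are no further tangent Killing fields, so the rotational Jacobi fields form a $45 - 10 = 35$-dimensional subspace of the Jacobi nullspace. Matching this dimension with the computed $\mathrm{Nul}(M) = 35$ forces equality.
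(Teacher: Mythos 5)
Your overall strategy is the same as the paper's: decompose $L^2(M)$ by Frobenius reciprocity into isotypic summands $V_\gamma\otimes (V_\gamma^*)^{T^2}$, observe that $\G$-invariance forces $\Delta_M=\mathrm{Id}\otimes A_\gamma$ on each one, truncate to finitely many $\gamma$, diagonalize, and count dimensions below and at the threshold $g(n-1)=32$; the identification of the nullspace with the $35$-dimensional space of rotational fields via $\dim\SO(10)-\dim\SO(5)$ is also exactly the paper's closing argument. However, there is one genuine gap in your truncation step. You assert that because the Casimir eigenvalue is polynomial in the highest weight, ``only finitely many $\gamma$ can produce an eigenvalue $\leq 32$.'' The Casimir controls the spectrum of the \emph{normal} homogeneous metric, not of the induced metric $g_M$, which is not normal (its scale factors on the four root planes are $\tfrac12\pm\tfrac{\sqrt2}{4}$ and $1\pm\tfrac{\sqrt2}{2}$). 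To pass from Casimir growth to a lower bound on the eigenvalues of $A_\gamma$ you need a comparison of the two Dirichlet forms: if $r_{\min}$ denotes the smallest eigenvalue of $g_M^{-1}$ relative to $g_{\mathrm{nor}}^{-1}$ (here $r_{\min}=2-\sqrt2$), then every eigenvalue $\lambda$ of $-\Delta_{g_M}$ on the $\gamma$-summand satisfies $\lambda\geq r_{\min}\,\mu(\gamma)$, where $\mu(\gamma)$ is the Casimir scalar. This Rayleigh-quotient comparison (which also uses that the two invariant volume forms differ by a constant) is precisely the paper's Theorem~\ref{thm:MainBound}, and without it your finite candidate list is not justified. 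With it, Freudenthal's formula $\mu(k,2\ell)=(k+\ell)(k+\ell+3)+\ell(\ell+1)$ yields the $22$ summands the paper actually examines.

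Beyond that, the step you leave most underspecified is the one that constitutes the bulk of the paper's work: producing a concrete, computable model of $A_\gamma$ on each zero-weight space. The paper does this not by matrix coefficients but by building explicit highest-weight functions $\zeta,\nu,\kappa,\sigma,\rho$ from the equivariant maps $\mathbf{x}$, $\mathbf{n}$, $\mathbf{x}\wedge\mathbf{n}$, $q(\mathbf{x})$, $s(\mathbf{x})$, proving that the appropriate monomials in them are linearly independent with cardinality equal to the zero-weight multiplicity $m(k,2\ell)=\ell(k+1)+\lfloor k/2\rfloor+1$, and then deriving closed-form recursions for $\Delta$ on these monomials from the pairwise gradient dot products. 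Your alternative of diagonalizing $A_\gamma$ directly as an operator on $(V_\gamma^*)^{T^2}$ built from the $\mathfrak{so}(5)$-action weighted by the inverse scale factors is a legitimate different route (it works because $\G$ is compact, so the non-normal Laplacian is still the right-invariant operator $-\sum_i a_{j(i)}^{-1}Y_i^2$), but as written it is a plan rather than a proof; if you pursue it you should verify that the resulting matrices reproduce, e.g., the eigenvalues $32\mp 4\sqrt{14}$ on the two copies of $V_{2,0}$. Finally, your remark that exact (non-floating-point) arithmetic is needed at the threshold $\mu=32$ is well taken and matches the paper, where the $32$-eigenspace is exactly the single copy of $V_{1,2}$ spanned by the components of $\mathbf{x}\wedge\mathbf{n}$.
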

	
	\begin{thm} Let $M^4 \subset S^5$ denote the $(\SO(3) \times \SO(2))$-invariant minimal isoparametric hypersurface.  Then $M$ has Morse index $39$ and nullity $11$.  In particular, every Jacobi field on $M$ is rotational (i.e., generated by the $\SO(6)$-action).
	\end{thm}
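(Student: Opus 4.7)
My plan is to mirror the strategy used for Theorem 1.1 in this lower-dimensional but structurally analogous setting. Because $g = 4$ and $n = 5$, the identity $\mathcal{J}_M = -(\Delta_M + g(n-1))$ specializes to $\mathcal{J}_M = -(\Delta_M + 16)$, so the task reduces to enumerating the eigenvalues $\mu$ of $\Delta_M$ with $\mu < 16$ (for $\mathrm{Ind}$) and with $\mu = 16$ (for $\mathrm{Nul}$), counted with multiplicity.

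First I would realize $M^4 = G/K$ as a homogeneous space with $G = \SO(3) \times \SO(2)$, where $K$ is the finite stabilizer of a chosen basepoint determined by the isoparametric foliation, and write down the induced Riemannian metric via a reductive decomposition $\mathfrak{g} = \mathfrak{k} \oplus \mathfrak{m}$. The critical point is that this metric is \emph{not} bi-invariant. By Peter--Weyl together with Frobenius reciprocity,
\begin{equation*}
L^2(M) \;\cong\; \bigoplus_{\pi \in \widehat{G}} V_\pi \otimes (V_\pi^*)^K,
\end{equation*}
and since the irreducible representations of $G$ are indexed by a pair $(\ell,k) \in \Z_{\geq 0} \times \Z$ with $\dim V_{(\ell,k)} = 2\ell + 1$, the isotypic components are very concrete. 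On each component the Laplacian acts as a small symmetric matrix $\Delta_{(\ell,k)}$, which I would write as the Casimir of $\mathfrak{g}$ plus explicit correction terms arising from the non-normality of the metric, and then diagonalize by hand.

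Next, the lower bound $\mathrm{Nul}(M) \geq 11$ has a purely geometric source: the $\SO(6)$-action on $S^5$ induces a linear map from $\mathfrak{so}(6)$ to normal vector fields along $M$, whose kernel has dimension equal to that of the stabilizer of $M$ in $\SO(6)$, namely $\dim(\SO(3) \times \SO(2)) = 4$. This produces $15 - 4 = 11$ linearly independent rotational Jacobi fields, all of which lie in $\ker \mathcal{J}_M$. Equality $\mathrm{Nul}(M) = 11$ then follows from showing that the Laplace eigenvalue $16$ has total multiplicity exactly $11$ in the Peter--Weyl sum.

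The main obstacle will be accurate bookkeeping of the low-lying spectrum under the non-normal metric. For each $(\ell,k)$ with Casimir eigenvalue not much larger than $16$, I would compute $\dim (V_{(\ell,k)}^*)^K$ by branching to $K$, write out $\Delta_{(\ell,k)}$ explicitly including the off-diagonal corrections induced by the non-normality, diagonalize, and then sum multiplicities of eigenvalues below $16$ (targeting $39$) and equal to $16$ (targeting $11$). The delicate point is that the off-diagonal corrections can push Casimir eigenvalues across the threshold $16$ in either direction, so a robust a priori bound on which $(\ell, k)$ can possibly contribute is essential to guarantee that no isotypic component has been overlooked.
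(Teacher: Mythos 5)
Your outline follows the same strategy as the paper: Frobenius reciprocity to decompose $L^2(M)$ into isotypic components indexed by $\widehat{\SO(3)\times\SO(2)}$, a finite matrix for $\Delta$ on each multiplicity space, the count of rotational Jacobi fields $\dim\SO(6)-\dim(\SO(3)\times\SO(2))=11$ for the nullity, and an a priori bound to make the enumeration exhaustive. The structural skeleton is right, and your multiplicity and threshold ($\mu<16$ versus $\mu=16$) bookkeeping is correct.

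However, the two places where you defer are exactly where the mathematical content lives, and as written the argument cannot be completed. First, you correctly flag that ``a robust a priori bound on which $(\ell,k)$ can possibly contribute is essential,'' but you do not supply one, and without it the enumeration is open-ended. The paper closes this by comparing the induced (non-normal) metric $g_M$ with the normal homogeneous metric $g_{\mathrm{nor}}$ on $N=(\SO(3)\times\SO(2))/\Z_2$: since $r_{\min}\, g_{\mathrm{nor}}^{-1}\le g_M^{-1}\le r_{\max}\, g_{\mathrm{nor}}^{-1}$ with $r_{\min}=2-\sqrt2$, a Rayleigh-quotient argument on each isotypic component gives $\lambda\ge(2-\sqrt2)\bigl(p(p+1)+q^2\bigr)$, reducing the search to an explicit finite list of $(p,q)$. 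You would need to prove some such comparison estimate; note that your hoped-for formula ``Casimir plus correction terms'' does not by itself yield a lower bound, because the corrections are non-scalar on the multiplicity space and can in principle be large. Second, ``diagonalize $\Delta_{(\ell,k)}$ by hand'' presupposes that you can actually produce the matrix of $\Delta_{g_M}$ on each multiplicity space. The isotropy group here is only $\Z_2$, so the invariant metric has six parameters including off-diagonal terms ($-\sqrt2\,\omega_3\omega_4$ in the minimal metric), and the restriction of $\Delta$ to a multiplicity space is a genuinely non-scalar operator whose entries must be computed. The paper does this by constructing explicit highest weight functions $\zeta,\nu,\kappa,\beta,\sigma,\rho$ from equivariant maps ($\mathbf{x}$, $\mathbf{n}$, and quadratic/quartic covariants), proving by evaluation at explicit points that their monomials give bases of each multiplicity space, and then computing all pairwise gradient dot products and Laplacians via moving frames. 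Your alternative (realizing $\Delta_{g_M}$ as a second-order element of $U(\mathfrak g)$ acting on $(V_\pi^*)^K$) is viable in principle, but you would still need to carry out that Lie-algebraic computation explicitly and verify that the resulting eigenvalues below and at $16$ sum to $39$ and $11$; the numbers are not forced by any of the soft structure you describe.
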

	
	\begin{rmk} That every Jacobi field on $M^{n-1} \subset S^n$ is rotational has consequences for minimal desingularizations of its cone $\mathrm{C}(M) = \{rx \in \R^{n+1} \colon r > 0, x \in M\}$.  Indeed, a result of Allard and Almgren \cite{AllAlm81} implies that such desingularizations are normal graphs over $\mathrm{C}(M)$.
	\end{rmk}
	
	\indent Now, in the case where $M \subset S^n$ is a homogeneous minimal hypersurface with $g = 3$, Solomon \cites{solomon90i,solomon90ii} computes the Morse index and nullity by deriving an explicit formula for the Laplace eigenvalues and their multiplicities. In particular, such spectra consist of \emph{integers}.  For our examples, we obtain an algorithm that computes the eigenvalues $\lambda$ of $M$ satisfying $\lambda \geq -K$ for any desired $K > 0$.  As a consequence, we find by stark contrast that minimal hypersurfaces with $g = 4$ can admit \emph{irrational} Laplace eigenvalues.  In fact, more is true:
	
	\begin{thm} Let $M$ denote the $\SO(5)$-invariant minimal hypersurface in $S^9$ or the $(\SO(3) \times \SO(2))$-invariant minimal hypersurface in $S^5$.  Then the Laplace spectrum of $M$ contains eigenvalues not expressible in radicals.
	\end{thm}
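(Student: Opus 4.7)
The plan is to exhibit, via the algorithm developed in the preceding sections, a specific finite-dimensional block of the Laplacian whose characteristic polynomial has an irreducible factor of degree at least $5$ with non-solvable Galois group over $\mathbb{Q}$.

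First I would identify a single isotypic component of $C^\infty(M)$ under the isometry group $G$ of $M$ on which $\Delta_M$ acts as a finite-dimensional self-adjoint operator $L$. Because the algorithm expresses the matrix entries of $L$ in terms of representation-theoretic data (projections built from branching/Clebsch--Gordan coefficients for the pair $G \supset \Hg$, where $M = G/\Hg$), these entries lie in $\mathbb{Q}$, and one obtains an explicit characteristic polynomial $p(x) \in \mathbb{Q}[x]$. I would then factor $p$ over $\mathbb{Q}$ and isolate an irreducible factor $q(x)$ of degree $n \geq 5$.

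To show that the Galois group $\mathrm{Gal}(q/\mathbb{Q})$ is non-solvable, I would apply the standard reduction-mod-$p$ strategy: for primes $\ell$ not dividing the leading coefficient or the discriminant of $q$, the cycle type of the Frobenius class in $\mathrm{Gal}(q/\mathbb{Q})$ matches the degree pattern of the factorization of $q$ modulo $\ell$. It then suffices to display primes producing (i) an $n$-cycle, (ii) a transposition (e.g. a factorization of type $(2,1,\ldots,1)$), and (iii) a $p$-cycle for some prime $p$ with $n/2 < p < n-1$; these three elements together generate $S_n$, which is non-solvable for $n \geq 5$. Repeating this for blocks arising from both $M^8 \subset S^9$ and $M^4 \subset S^5$ produces in each case a Laplace eigenvalue, namely a root of $q$, that is not expressible in radicals.

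The main obstacle is not the Galois-theoretic input, which is routine in a computer algebra system once $q$ has been produced, but rather the choice of block: one must push the algorithm far enough to reach an isotypic component whose characteristic polynomial admits a large irreducible factor, yet stay in a regime where the rational entries of $L$ remain small enough for exact arithmetic. A secondary subtlety is certifying that the putative factor $q$ is genuinely irreducible over $\mathbb{Q}$ and not an artifact of an insufficiently refined basis, but this too is a mechanical check.
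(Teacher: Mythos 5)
Your proposal follows essentially the same route as the paper: the authors pick a single isotypic block (the $5\times 5$ matrix of $\Delta$ on $\mathrm{span}(\mathcal{U}_{8,0})$ for the $\SO(5)$ hypersurface, and on $I(4,4)$ for the $\SO(3)\times\SO(2)$ one), compute its integer characteristic polynomial, and verify that its Galois group is $\mathrm{S}_5$, hence non-solvable. Your additional detail — certifying the Galois group by Dedekind reduction mod primes, and noting that one must check the rationality of the block's entries (which in the paper drops out of the explicit Laplacian product formulas despite the $\sqrt{2}$'s in the metric) — is exactly the computation the paper delegates to a computer algebra system.
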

	
	\indent An important point of view on homogeneous minimal hypersurfaces $M \subset S^n$ is that they are \emph{isoparametric}, i.e., have constant principal curvatures.  There is a vast body of literature on this subject, stretching back to Cartan in the 1930's \cites{Cartan38,Cartan39}: see \cite{Chi20} for a survey.  In this context, Tang and Yan \cite{tangyan13} proved that the first Laplace eigenvalue of $M$ satisfies $\lambda_1(M) = -(n-1)$, thereby verifying Yau's conjecture in the isoparametric case.  In addition, Solomon \cite{solomon92} studied the  spectrum of isoparametric minimal hypersurfaces with $g = 4$ distinct principal curvatures, proving several results.  In particular, he showed that $-2(n-1)$ is always a Laplace eigenvalue.

	\begin{figure}[h]
		\small
		$$\begin{tabular}{| c | c | c | c | c |}     \hline 
			$g$ & $\dim(M)$ & $M = \G/\mathrm{H}$ & $\mathrm{Ind}(M)$ & $\mathrm{Nul}(M)$  \\ \hline \hline
			$1$ & $n-1$ & $S^{n-1}$ & $1$ & $n$  \\ \hline
			$2$ & $n-1$ & $S^p \times S^{n-1-p}$ & $n+2$ & $ (p+1)(n-p)$  \\ \hline
			$3$ & $3$ & \rule{0pt}{4.2ex} $\displaystyle \frac{\SO(3)}{\Z_2 \times \Z_2}$ & $20$ & $7$  \\ 
			$3$ & $6$ & \rule{0pt}{4.2ex} $\displaystyle \frac{\SU(3)}{T^2}$ & $44$ & $20$  \\ 
			$3$ & $12$ & \rule{0pt}{4.2ex} $\displaystyle \frac{\Sp(3)}{\Sp(1)^3}$ & $119$ & $70$ \\ 
			$3$ & $24$ & \rule{0pt}{4.2ex} $\displaystyle \frac{\mathrm{F}_4}{\Spin(8)}$ & $377$ & $273$  \\ \hline
			$4$ & $2k-2$ \ $(k \geq 3$) & \rule{0pt}{4.2ex} $\displaystyle \frac{\SO(k) \times \SO(2)}{\SO(k-2) \times \Z_2}$ & $39 \text{ if } k = 3$ & $11 \text{ if } k = 3$  \\ 
			$4$ & $4k-2$ \ $(k \geq 2$) & \rule{0pt}{4.2ex} $\displaystyle \frac{\mathrm{S}(\U(k) \times \U(2))}{\SU(k-2) \times T^2}$ &  &     \\ 
			$4$ & $8k-2$ \ $(k \geq 2$) & \rule{0pt}{4.2ex} $\displaystyle \frac{\Sp(k) \times \Sp(2)}{\Sp(k-2) \times \Sp(1)^2}$ &  &    \\ 
			$4$ & $8$ & \rule{0pt}{4.2ex} $\displaystyle \frac{\SO(5)}{T^2}$ & $275$ & $35$   \\ 
			$4$ & $18$ & \rule{0pt}{4.2ex} $\displaystyle \frac{\U(5)}{\SU(2) \times \SU(2) \times \U(1)}$ &  &     \\ 
			$4$ & $30$ & \rule{0pt}{4.2ex} $\displaystyle \frac{\Spin(10) \times \U(1)}{\SU(4) \times \U(1)}$ &  &    \\ \hline
			$6$ & $6$ & \rule{0pt}{4.2ex} $\displaystyle \frac{\SO(4)}{\Z_2 \times \Z_2}$ & &     \\ 
			$6$ & $12$ & \rule{0pt}{4.2ex} $\displaystyle \frac{\G_2}{T^2}$ &  &    \\ \hline
		\end{tabular}$$
		\caption{Homogeneous Minimal Hypersurfaces in $S^n$}
		\label{fig-HomogenousMinimal}
	\end{figure}
	\normalsize

	\subsection{Method}
	
	\indent We now provide a brief outline of our method for computing the Morse index and nullity of a homogeneous minimal hypersurface $M^{n-1}$ in $S^n$.  It is a refinement and extension of a technique used by Solomon \cites{solomon90i,solomon90ii}, who applied it to the case of cubic ($g = 3$) minimal isoparametric hypersurfaces.  Roughly speaking, the idea is to decompose $L^2(M)$ into pieces, and then compute the Laplace spectrum on each piece. \\
	\indent To be more precise, let $M^{n-1} = \G/\mathrm{H}$ be an abstract homogeneous Riemannian manifold, and fix a positive constant $K > 0$.  The following four-step process yields an algorithm for calculating the Laplace eigenvalues $\lambda$ of $M$ (and their multiplicities) satisfying $\lambda \geq -K$.
	\begin{enumerate}
		\item  Decompose $L^2(M) = \bigoplus_\rho W_\rho$ into a direct sum of isotypical summands $W_{\rho}$.  Since the Laplacian is $\G$-invariant, the restrictions $\left.\Delta\right|_{W_\rho} \colon W_\rho \to W_\rho$ are well-defined. \\
		\item  For each summand $W_\rho$, find an explicit ``generating basis” $\mathcal{B}_\rho$.  (The finite set of functions $\mathcal{B}_\rho$ is typically a basis of a particular subspace of $W_\rho$, not of $W_\rho$ itself.) \\
		\item For each $F \in \mathcal{B}_\rho$, express $\Delta F$ as a linear combination of functions in $\mathcal{B}_\rho$.  Because $\mathcal{B}_\rho$ generates $W_\rho$ in a suitable sense, this is sufficient to compute the spectrum of $\left.\Delta\right|_{W_\rho}$. \\
		\item Apply the eigenvalue bound of Theorem \ref{thm:MainBound}. \\
	\end{enumerate}
	In $\S$\ref{subsec:Method}, we explain these steps in greater detail.  Note that the presence of an embedding $M^{n-1} \to S^n$ is not strictly necessary, but we find that it is quite helpful in constructing the ``generating basis" $\mathcal{B}_\rho$ (Step 2).
	
	\subsection{Outlook} It is plausible that an application of the above method could yield the Morse index and nullity of the three families of quartic minimal hypersurfaces in $S^{2k-1}$, $S^{4k-1}$, and $S^{8k-1}$.  On the other hand, we expect that the situation for the two sporadic quartic hypersurfaces of dimensions $18$ and $30$, as well as that for the two sextic hypersurfaces, is likely to be computationally challenging. \\
	\indent More generally, we expect that this technique may be applied to compute the spectrum of other non-normal Riemannian homogeneous spaces. \\
	
	\noindent \textbf{Conventions:}
	\begin{itemize}
		\item The Laplacian $\Delta := \Delta_M$ of a Riemannian manifold $M$ has non-positive spectrum.
		\item Via the Theorem of Highest Weight, we occasionally identify irreducible $\G$-representations with their highest weights.
		\item We sum over repeated indices unless otherwise indicated.
		\item Various computations in this work were performed with the aid of a computer algebra system (viz., \textsc{Maple}).
	\end{itemize}
	
	\noindent \textbf{Acknowledgments:} We thank Bruce Solomon and Christos Mantoulidis for helpful conversations.

	\section{Preliminaries} \label{sec:Method}

	\indent In $\S$\ref{subsec:Method}, we explain in detail our method for computing the Morse index and nullity of homogeneous minimal hypersurfaces in $S^n$.  It is a refinement and extension of a technique used by Solomon \cites{solomon90i,solomon90ii}, who applied it to the cubic case ($g = 3$).  Then, in $\S$\ref{subsec:EigenvalueBounds}, we state and prove the eigenvalue bound (Theorem \ref{thm:MainBound}) required for the last step in the process.  Finally, in $\S$\ref{subsec:MovingFrames}, we set up our moving frame apparatus for homogeneous hypersurfaces in $S^n$, which will enable us to efficiently compute geometric quantities in concrete examples.  We will employ basic concepts from the representation theory of Lie groups and Lie algebras; see the Appendix for details.
	
	\subsection{The method} \label{subsec:Method}
	
	\indent Let $M = \G/\mathrm{H}$ be a homogeneous space with $\G$ compact and $\mathrm{H} \leq \G$ closed, and let $g_M$ be a $\G$-invariant metric on $M$.  The $\G$-action on $M = \G/\mathrm{H}$ given by left-multiplication induces a $\G$-action on $L^2(M;\C)$ via the formula $(g^\sharp f)(x) := f(g^{-1}x)$, where $f \in L^2(M;\C)$ and $x \in M$.  The $\G$-module $L^2(M;\C)$ then decomposes into $\G$-submodules in the following way:
	
	\begin{prop}[Frobenius Reciprocity] \label{prop:FrobRecGen} Let $\widehat{\G}$ denote the set of equivalence classes of finite-dimensional, irreducible complex $\G$-modules.  There exists a $\G$-module isomorphism
		$$L^2(M; \C) \cong \bigoplus_{\rho \in \widehat{\G}} V_{\rho} \otimes \mathrm{Hom}_{\mathrm{H}}(V_\rho, \C) 
		= \bigoplus_{\rho \in \widehat{\G}} V_{\rho}^{\oplus m(\rho)}$$
		where the \emph{multiplicity of $V_\rho$} is
		$$m(\rho) = \dim_{\C}  \mathrm{Hom}_{\mathrm{H}}(V_{\rho}, \C) = \dim_{\C}\{v \in V_\rho \colon hv = v, \ \forall h \in \mathrm{H}\}$$
	\end{prop}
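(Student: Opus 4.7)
The plan is to deduce Proposition \ref{prop:FrobRecGen} from the Peter–Weyl theorem for the compact group $\G$, viewing $L^2(M;\C)$ as the subspace of right-$\mathrm{H}$-invariant functions on $\G$. Concretely, first I would observe that the projection $\pi \colon \G \to \G/\mathrm{H} = M$ induces a pullback $\pi^* \colon L^2(M;\C) \to L^2(\G;\C)$ whose image is exactly the closed subspace $L^2(\G;\C)^{\mathrm{H}}$ of functions satisfying $f(gh) = f(g)$ for all $h \in \mathrm{H}$; moreover $\pi^*$ is a $\G$-equivariant isometry (up to a volume normalization) with respect to the left-regular action $(g^\sharp f)(x) = f(g^{-1}x)$.

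Next, I would invoke the Peter–Weyl theorem, which gives an isomorphism of $\G\times\G$-modules
\begin{equation*}
L^2(\G;\C) \;\cong\; \bigoplus_{\rho \in \widehat{\G}} V_\rho^* \otimes V_\rho,
\end{equation*}
where the left-regular action acts on the $V_\rho^*$ factor via the dual representation and the right-regular action acts on the $V_\rho$ factor via $\rho$ (the isomorphism being given by matrix coefficients $v^* \otimes v \mapsto (g \mapsto v^*(\rho(g)v))$). Taking $\mathrm{H}$-invariants with respect to the right-regular action preserves only the second tensor factor, so
\begin{equation*}
L^2(M;\C) \;\cong\; L^2(\G;\C)^{\mathrm{H}} \;\cong\; \bigoplus_{\rho \in \widehat{\G}} V_\rho^* \otimes (V_\rho)^{\mathrm{H}},
\end{equation*}
as a $\G$-module under the residual left-regular action. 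Re-indexing by sending $\rho$ to its dual $\rho^{\vee}$ (a bijection of $\widehat{\G}$ with itself) turns this into $\bigoplus_{\rho} V_\rho \otimes (V_\rho^*)^{\mathrm{H}}$, and the tautological identification $(V_\rho^*)^{\mathrm{H}} = \mathrm{Hom}_{\mathrm{H}}(V_\rho, \C)$ yields the desired formula.

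Finally, for the two expressions of $m(\rho)$, I would note that $\dim_{\C}\mathrm{Hom}_{\mathrm{H}}(V_\rho, \C)$ counts the multiplicity of the trivial $\mathrm{H}$-representation in $V_\rho$, which by complete reducibility of the restriction $V_\rho|_{\mathrm{H}}$ (possible because $\mathrm{H}$ is compact) equals $\dim_{\C} V_\rho^{\mathrm{H}}$; equivalently, averaging over $\mathrm{H}$ against Haar measure gives an explicit projector onto $V_\rho^{\mathrm{H}}$ whose trace computes both quantities. The main (and only) subtlety here is careful tracking of conventions: one must ensure that the Peter–Weyl isomorphism is stated with respect to the same left-regular action used to define the $\G$-module structure on $L^2(M;\C)$, and that the $\mathrm{H}$-invariants appearing in the final sum match those appearing in the definition of $m(\rho)$. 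No substantive analytical obstacle arises, since all of the required tools (Peter–Weyl, complete reducibility for compact groups, and the pullback identification) are classical.
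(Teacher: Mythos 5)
Your argument is correct: the identification $L^2(M;\C)\cong L^2(\G;\C)^{\mathrm H}$, the Peter--Weyl decomposition $L^2(\G;\C)\cong\bigoplus_\rho V_\rho^*\otimes V_\rho$ with left- and right-regular actions on the respective factors, the passage to right-$\mathrm H$-invariants, and the reindexing by duality together constitute the standard proof of this form of Frobenius reciprocity, and your dimension count $\dim_\C\mathrm{Hom}_{\mathrm H}(V_\rho,\C)=\dim_\C V_\rho^{\mathrm H}$ via complete reducibility (or the averaging projector) is also sound. The paper states this proposition without proof, treating it as classical, so there is no alternative argument to compare against; yours is exactly the expected derivation, with the only (harmless) omission being that the Peter--Weyl direct sum is a Hilbert-space completion rather than an algebraic one.
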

	\noindent We refer to each $V_\rho^{\oplus m(\rho)} = V_\rho \oplus \cdots \oplus V_\rho$ as an \emph{isotypical summand}.  The method now proceeds as follows.  \\

	\textbf{Step 1.} Compute $m(\rho)$.  In general, this can be non-trivial. \\
	
	\textbf{Step 2a.} Find explicit $\G$-equivariant functions $\mathbf{z}_i \colon M \to V_{\rho_i}$.  Here, the $\G$-modules $V_{\rho_1}, \ldots, V_{\rho_\ell}$ need not be distinct.  The functions $\mathbf{z}_1, \ldots, \mathbf{z}_\ell$ should be sufficiently numerous so as to ``generate" $L^2(M; \C)$ in a sense to be described in Step 2b. \\
	
	\textbf{Step 2b.} For each irreducible representation $V_\rho$, its highest weight subspace $L_\rho \subset V_\rho$ has complex dimension $1$.  After choosing a non-zero highest weight vector, we may identify $L_\rho$ with $\C$.  Having done this, let $\pi_\rho \colon V_{\rho} \to \C$ denote the corresponding projection map onto $L_\rho \simeq \C$, and consider the \emph{highest weight functions}
	\begin{align*}
		\zeta_i := \pi_{\rho_i} \circ \mathbf{z}_i \colon M \to \C.
	\end{align*}
	We say that $\zeta_i$ has  \emph{weight} $\rho_i$. Crucial properties of such functions are: 
	
	\begin{prop}[\cite{solomon90i}] \label{prop:SpanHighestWeight} ${}$
		\begin{enumerate}[(a)]
			\item The set of highest weight functions with weight $\rho$ is a $\C$-vector space of complex dimension $m(\rho)$.
			\item If $\zeta_1, \ldots, \zeta_\ell$ are highest weight functions of weights $\rho_1, \ldots, \rho_\ell$, respectively, then $\zeta_1^{k_1} \cdots \zeta_{\ell}^{k_\ell}$ is a highest weight function of weight $k_1 \rho_1 + \cdots + k_\ell \rho_\ell$.
			\item If $\zeta$ is a highest weight function of weight $\rho$, then $\mathrm{span}_{\C}\{g^\sharp \zeta \colon g \in \G\} \cong V_\rho$.
			\item If $\zeta_1$, $\zeta_2$ are highest weight functions of weights $\rho_1$ and $\rho_2$, then  $\nabla \zeta_1 \cdot \nabla \zeta_2$ and $\Delta \zeta_i$ are highest weight functions of weights $\rho_1 + \rho_2$ and $\rho_i$, respectively.
		\end{enumerate}
	\end{prop}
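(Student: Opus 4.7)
The plan is to reduce all four parts to an infinitesimal characterization of highest weight functions. Fix a maximal torus $T \subset \G$ with Cartan subalgebra $\mathfrak{t} \subset \mathfrak{g}$ and a choice of positive root system, giving a nilpotent subalgebra $\mathfrak{n}^+ \subset \mathfrak{g}_{\C}$. The $\G$-representation on $C^\infty(M;\C)$ differentiates to an action of $\mathfrak{g}$ by derivations of pointwise multiplication. The first step is to check that a nonzero smooth function $\zeta$ is a highest weight function of weight $\rho$ in the sense of Step 2b precisely when
\begin{align*}
H \cdot \zeta = \rho(H)\,\zeta \ \ \forall\, H \in \mathfrak{t}, \qquad E \cdot \zeta = 0 \ \ \forall\, E \in \mathfrak{n}^+.
\end{align*}
This follows by differentiating $(g^\sharp \zeta)(x) = \zeta(g^{-1}x)$ and comparing with the standard characterization of the highest weight line $L_\rho \subset V_\rho$; equivariance of $\mathbf{z}$ ensures $\pi_\rho \circ \mathbf{z}$ has this infinitesimal form.

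With this intrinsic criterion in hand, part (a) identifies the space of weight-$\rho$ highest weight functions with the $\rho$-highest-weight subspace of $L^2(M;\C)$, on which $\G$ acts completely reducibly. Under the decomposition of Proposition \ref{prop:FrobRecGen}, only the isotypical summand $V_\rho^{\oplus m(\rho)}$ contributes highest weight vectors of weight $\rho$, since no irreducible $V_{\rho'}$ with $\rho' \neq \rho$ has $\rho$ as its highest weight. The highest weight subspace of $V_\rho^{\oplus m(\rho)}$ visibly has complex dimension $m(\rho)$.

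For (b) and (d), Leibniz's rule for the derivation action of $\mathfrak{g}$ gives
\begin{align*}
X \cdot (\zeta_1^{k_1} \cdots \zeta_\ell^{k_\ell}) \,=\, \sum_{i} k_i\, (X \cdot \zeta_i)\, \zeta_i^{k_i - 1} \prod_{j \neq i} \zeta_j^{k_j},
\end{align*}
which for $H \in \mathfrak{t}$ evaluates to $\bigl(\sum_i k_i \rho_i(H)\bigr)$ times the product and for $E \in \mathfrak{n}^+$ vanishes termwise; that is (b). The $\G$-invariance of $g_M$ and of $\Delta_M$ means that every $X \in \mathfrak{g}$ acts as a derivation of the bilinear pairing $(\nabla f_1, \nabla f_2) \mapsto g_M(\nabla f_1, \nabla f_2)$ and commutes with $\Delta_M$; the same Leibniz-and-annihilation calculation then shows that $\nabla \zeta_1 \cdot \nabla \zeta_2$ is a highest weight function of weight $\rho_1 + \rho_2$ and that $\Delta \zeta_i$ is a highest weight function of weight $\rho_i$, giving (d).

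For (c), let $W := \mathrm{span}_{\C}\{g^\sharp \zeta : g \in \G\}$, a $\G$-invariant subspace. Since $\zeta$ is a weight-$\rho$ vector annihilated by $\mathfrak{n}^+$, the cyclic $\mathfrak{g}$-submodule it generates is a quotient of the Verma module of highest weight $\rho$; since $W \subset L^2(M;\C)$ is unitary and hence completely reducible into finite-dimensional irreducibles, this quotient must be the unique finite-dimensional irreducible quotient, namely $V_\rho$. The $\G$-invariant subspace containing $\zeta$ is then $V_\rho$ itself, so $W = V_\rho$. I expect the main technical subtlety to lie in (a): the surjectivity of $\mathbf{z} \mapsto \pi_\rho \circ \mathbf{z}$ from $\G$-equivariant $V_\rho$-valued maps onto highest weight functions requires the intrinsic characterization above, and its injectivity uses the fact that matrix coefficients of a nonzero vector in an irreducible unitary representation cannot vanish identically on $\G$.
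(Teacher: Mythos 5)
The paper does not prove this proposition at all: it is imported verbatim from Solomon's work \cite{solomon90i}, so there is no in-paper argument to compare against. Your proof supplies the standard representation-theoretic argument, and in substance it is correct: the infinitesimal (weight-and-annihilation) characterization, the Leibniz rule for (b) and (d) together with $\G$-invariance of $g_M$ and $\Delta_M$, the identification of the weight-$\rho$ extreme vectors in $L^2(M;\C)$ with the $m(\rho)$-dimensional space coming from the isotypical summand for (a), and the injectivity of $\mathbf{z} \mapsto \pi_\rho \circ \mathbf{z}$ via non-vanishing of matrix coefficients. For (c), the Verma-module detour is heavier than necessary: the appendix's fact that the span of the $\G$-orbit of an extreme weight vector is irreducible, combined with complete reducibility of the finite-dimensional isotypical summand containing $\zeta$, gives the conclusion directly without leaving the category of unitary $\G$-modules.

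One point deserves more care than you give it. With the paper's conventions --- $(g^\sharp f)(x) = f(g^{-1}x)$ and $\mathbf{z}(gx) = g\cdot\mathbf{z}(x)$ --- differentiating yields $g^\sharp \zeta = (g\cdot\pi_\rho)\circ\mathbf{z}$ for the \emph{contragredient} action on $V_\rho^*$, in which $\pi_\rho$ is an extreme vector of weight $-\rho$ annihilated by $\mathfrak{n}^-$, not a weight-$\rho$ vector annihilated by $\mathfrak{n}^+$. So the literal output of your first step is that $\zeta$ generates a copy of $V_\rho^* \cong V_{-w_0\rho}$, and your asserted characterization holds only after either dualizing or adjusting the sign conventions. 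This is harmless for the applications in this paper (all $\SO(5)$-irreducibles are self-dual, and for $\SO(3)\times\SO(2)$ the discrepancy is absorbed by the symmetric roles of $V_{p,q}$ and $V_{p,-q}$), and it does not affect parts (a), (b), (d) or the dimension counts, but as written your claim that the differentiation ``compares with the standard characterization of the highest weight line'' papers over exactly the place where the conventions must be reconciled.
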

	
	For each isotypical summand $V_{\rho}^{\oplus m(\rho)}$, construct a basis $\mathcal{B}_\rho$ of the set of highest weight functions of weight $\rho$.  In practice, this amounts to finding a finite set $\mathcal{B}_\rho$ of monomials $\zeta_1^{k_1} \cdots \zeta_\ell^{k_\ell}$ that satisfies the following properties:
	\begin{enumerate}
		\item Each $F \in \mathcal{B}_\rho$ has weight $\rho$.
		\item $\mathcal{B}_\rho$ is linearly independent in the $\C$-vector space $L^2(M;\C)$.
		\item $\dim_{\C}(\mathrm{span}_{\C}(\mathcal{B}_\rho)) = m(\rho)$.
	\end{enumerate}
	Having done this, Proposition \ref{prop:SpanHighestWeight}(c) now implies that
	\begin{equation} \label{eq:IsotypicalID}
		V_\rho^{\oplus m(\rho)} \cong \mathrm{span}_{\C}\{ g^\sharp \mathcal{B}_\rho \colon g \in \mathrm{G}\}.
	\end{equation}
	
	\textbf{Step 3.} For each isotypical summand $V_\rho^{\oplus m(\rho)}$, compute the matrix representation of the Laplacian  $\Delta|_{V_\rho^{\oplus m(\rho)}} \colon V_\rho^{\oplus m(\rho)} \to V_\rho^{\oplus m(\rho)}$.  To do this, note that Proposition \ref{prop:SpanHighestWeight}(d) implies $\Delta(\operatorname{span}(\mathcal{B}_\rho)) \subset \operatorname{span}(\mathcal{B}_\rho)$.  Therefore, in view of (\ref{eq:IsotypicalID}) and the $\G$-invariance $\Delta(g^\sharp F) = \Delta F$, it suffices to compute $\Delta$ on the $m(\rho)$-dimensional subspace $\mathrm{span}_{\C}(\mathcal{B}_\rho) \subset \mathrm{span}_{\C}\{ g^\sharp \mathcal{B}_\rho \colon g \in \G\}$. 
	Computing $\Delta F$ for each $F \in \mathcal{B}_\rho$, and expressing the result as a linear combination of functions in $\mathcal{B}_\rho$, yields a matrix representation of $\Delta$ on the subspace $\mathrm{span}_{\C}(\mathcal{B}_\rho)$. \\
	\indent Now, by the Laplacian product rule, calculating $\Delta F = \Delta(\zeta_1^{k_1} \cdots \zeta_\ell^{k_\ell})$ requires the calculation of the Laplacians $\Delta \zeta_i$ and the pairwise dot products $\nabla \zeta_i \cdot \nabla \zeta_j$.  The difficulty of this computation depends on the complexity of $M$ and of the functions $\zeta_i$, which can in general have lengthy polynomial expressions. \\
	
	\indent \textbf{Step 4.} For each isotypical summand $V_\rho^{\oplus m(\rho)}$ of $L^2(M;\C)$, the result of Step 3 allows us to extract the eigenvalues of $\Delta|_{V_\rho^{\oplus m(\rho)}}$.  Each such eigenvalue has multiplicity $\dim_{\C}(V_\rho)$.  So, to compute the Morse index of $M^{n-1} \subset S^n$, we require an exhaustive list of those summands housing $-\Delta$ eigenvalues that are strictly less than $g(n-1)$. \\ 
	\indent Now, Theorem \ref{thm:MainBound} provides a lower bound on the $-\Delta$ eigenvalues on a given $V_\rho^{\oplus m(\rho)}$.  Therefore, if an isotypical summand $V_\rho^{\oplus m(\rho)}$ contributes to the Morse index, then its corresponding lower bound cannot exceed $g(n-1)$.  This criterion yields a finite list of candidate summands $V_\rho^{\oplus m(\rho)}$, thereby reducing the calculation of the index to a straightforward linear algebra computation.
		
	\subsection{Eigenvalue bounds} \label{subsec:EigenvalueBounds}
	
	Let $N = \G / \mathrm{H}$ be a homogeneous space with $\mathrm{G}$ compact. Let $g_1$ and $g_2$ be two $\G$-invariant metrics on $N$ and suppose $V$ is a finite-dimensional subspace of $L^2(N)$ invariant under both Laplacians $\Delta_{g_1}, \Delta_{g_2}.$ 
	
	Let $r_{\mathrm{min}}$ and $r_{\mathrm{max}}$ denote the minimum and maximum eigenvalues of $g^{-1}_1$ with respect to $g^{-1}_2,$ and let $\mu_{\mathrm{min}}$ and $ \mu_{\mathrm{max}}$ denote the minimum and maximum eigenvalues of $-\Delta_{g_2}$ on $V.$
	
	\begin{thm} \label{thm:MainBound}
		Let $\lambda$ be an eigenvalue of $-\Delta_{g_1}$ on $V$. We have the following bounds:
		\begin{equation*}
			r_{\mathrm{min}} \mu_{\mathrm{min}} \leq \lambda \leq r_{\mathrm{max}} \mu_{\mathrm{max}}
		\end{equation*}
	\end{thm}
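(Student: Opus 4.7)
The plan is to prove Theorem \ref{thm:MainBound} via the Rayleigh quotient characterization of Laplace eigenvalues, exploiting the fact that on a compact homogeneous space $N = \mathrm{G}/\mathrm{H}$ the $\mathrm{G}$-invariant measure is unique up to a positive constant. In particular, $dV_{g_1} = c\, dV_{g_2}$ for some $c > 0$, so the ratios $\int_N h\, dV_{g_1} / \int_N k\, dV_{g_1}$ and $\int_N h\, dV_{g_2} / \int_N k\, dV_{g_2}$ agree; this allows one to compare quadratic forms associated to the two different metrics without worrying about volume normalization.

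First I would record the pointwise comparison: by definition of $r_{\mathrm{min}}, r_{\mathrm{max}}$, and since $\mathrm{G}$-invariance of both metrics forces the eigenvalues of $g_1^{-1}$ with respect to $g_2^{-1}$ to be constant functions on $N$, one has
\begin{equation*}
r_{\mathrm{min}}\, g_2^{-1}(\alpha,\alpha) \;\leq\; g_1^{-1}(\alpha,\alpha) \;\leq\; r_{\mathrm{max}}\, g_2^{-1}(\alpha,\alpha)
\end{equation*}
for every cotangent covector $\alpha$ at every point of $N$. Next, given an eigenfunction $f \in V$ of $-\Delta_{g_1}$ with eigenvalue $\lambda$, integration by parts (with respect to $dV_{g_1}$, or equivalently $dV_{g_2}$ up to the constant $c$) gives
\begin{equation*}
\lambda \int_N |f|^2\, dV \;=\; \int_N g_1^{-1}(df, df)\, dV.
\end{equation*}
Combining the two displays, and dividing by $\int_N |f|^2\, dV > 0$, yields
\begin{equation*}
r_{\mathrm{min}}\, R_2(f) \;\leq\; \lambda \;\leq\; r_{\mathrm{max}}\, R_2(f), \qquad R_2(f) := \frac{\int_N g_2^{-1}(df,df)\, dV}{\int_N |f|^2\, dV}.
\end{equation*}

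Finally, since $V$ is $\Delta_{g_2}$-invariant and $\Delta_{g_2}$ is self-adjoint on $L^2(N, dV_{g_2})$, the restriction $-\Delta_{g_2}|_V$ is diagonalizable with spectrum contained in $[\mu_{\mathrm{min}}, \mu_{\mathrm{max}}]$. The Courant--Fischer min-max characterization applied within the finite-dimensional space $V$ then gives $R_2(f) \in [\mu_{\mathrm{min}}, \mu_{\mathrm{max}}]$ for every nonzero $f \in V$. Substituting this into the previous two-sided inequality produces $r_{\mathrm{min}} \mu_{\mathrm{min}} \leq \lambda \leq r_{\mathrm{max}} \mu_{\mathrm{max}}$, as required.

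The argument is essentially bookkeeping: there is no truly hard step, but the one that demands attention is the interaction between the two metrics at the level of volume forms and of inverse metrics. It is crucial that one uses the \emph{inverse} metric bound (the Laplacian pairs with $df$ via $g^{-1}$, not $g$) and that both Rayleigh quotients $R_1, R_2$ are computed against the same measure — which is legitimate precisely because $\mathrm{G}$-invariant densities on $N$ are unique up to scalar. Everything else is Rayleigh quotient manipulation in a finite-dimensional invariant subspace.
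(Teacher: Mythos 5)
Your proposal is correct and follows essentially the same route as the paper: a pointwise comparison of the inverse metrics on $df$, the observation that the $\mathrm{G}$-invariant volume forms agree up to a positive constant, and the Rayleigh quotient characterizations of $\lambda$ and of $\mu_{\mathrm{min}},\mu_{\mathrm{max}}$ on the finite-dimensional invariant subspace $V$. The only cosmetic difference is that you evaluate the Rayleigh quotient directly on an eigenfunction of $-\Delta_{g_1}$ via integration by parts, whereas the paper bounds the quotient for all $f\in V$ and then notes every eigenvalue is realized as such a quotient.
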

	
	\begin{proof}
		The minimum and maximum eigenvalues of $-\Delta_{g_2}$ are characterized in terms of the Rayleigh quotient by
		\begin{equation*} 
			\begin{aligned}
				\mu_{\mathrm{min}} &= \min_{f \in V} \frac{(-\Delta f, f)}{(f,f)}  = \min_{f \in V} \frac{\int_N g_2(df, df) \, \mathrm{vol}_{g_2}}{\int_N f^2 \,\mathrm{vol}_{g_2}}, \\
				\mu_{\mathrm{max}} &= \max_{f \in V} \frac{(-\Delta f, f)}{(f,f)}  = \max_{f \in V} \frac{\int_N g_2(df, df) \, \mathrm{vol}_{g_2}}{\int_N f^2 \,\mathrm{vol}_{g_2}}.
			\end{aligned}
		\end{equation*}
		Let $f$ be a function on $N$. We have
		\begin{equation*}
			r_{\mathrm{min}} g_{2}(df,df) \leq g_1(df, df) \leq r_{\mathrm{max}} g_{2}(df,df).
		\end{equation*}
		We may integrate this inequality on $M$ to obtain
		\begin{equation*}
			\begin{aligned}
				r_{\mathrm{min}} \mu_{\mathrm{min}} \leq & \, r_{\mathrm{min}} \frac{\int_N g_{2} (df, df) \, \mathrm{vol}_{g_2}}{\int_N f^2 \,\mathrm{vol}_{g_2}} \leq \frac{\int_N g_1(df, df) \, \mathrm{vol}_{g_2}}{\int_N f^2 \,\mathrm{vol}_{g_2}} \\
				& \leq r_{\mathrm{max}} \frac{\int_N g_2 (df, df) \, \mathrm{vol}_{g_2}}{\int_N f^2 \,\mathrm{vol}_{g_2}} \leq r_{\mathrm{max}} \mu_{\mathrm{max}}.
			\end{aligned}
		\end{equation*}
		for any function $f \in V.$ The volume forms $\mathrm{vol}_{g_1}$ and $\mathrm{vol}_{g_2}$ differ by a positive contant, so we have 
		\begin{equation*}
			\frac{\int_N g_1(df, df) \, \mathrm{vol}_{g_2}}{\int_N f^2 \,\mathrm{vol}_{g_2}} =  \frac{\int_N g_1(df, df) \, \mathrm{vol}_{g_1}}{\int_N f^2 \,\mathrm{vol}_{g_1}}.
		\end{equation*}
		To complete the proof, we note that any eigenvalue $\lambda$ of $-\Delta_{g_1}$ must lie in the set
		\begin{equation*}
			\left\lbrace \left. \frac{\int_N g_1(df, df) \, \mathrm{vol}_{g_1}}{\int_N f^2 \,\mathrm{vol}_{g_1}} \right\rvert f \in V \right\rbrace. \qedhere
		\end{equation*}
	\end{proof}
	
	\begin{rmk}
		In applications of this result, $g_2$ will be a normal homogeneous metric on $N$, and $V$ will be an isotypical component of $L^2(N)$ corresponding to some irreducible $\G$-representation $W$. The Laplacian of the normal metric on this isotypical component is equal to $-\mu(W) \mathrm{Id},$ where $\mu(W)$ is the eigenvalue of the Casimir operator of $\G$ on $W$. In particular, we have $\mu_{\mathrm{min}} = \mu(W) = \mu_{\mathrm{max}}.$
	\end{rmk}
	
	\subsection{Moving frames} \label{subsec:MovingFrames}
	
	\subsubsection{The ambient sphere}
	
	We will be interested in hypersurfaces in the unit $n$-sphere $S^n = \{ X \in \R^{n+1} \colon |X|^2 = 1\}$.  Note that $\SO(n+1)$ acts transitively on $S^n$ with stabilizer $\SO(n)$.  Let $\mathbf{g} \colon \SO(n+1) \to \mathrm{M}_{n+1}(\R)$ denote the obvious embedding into the set of $n \times n$ matrices, and write
	$$\mathbf{g} = \begin{bmatrix} \mathbf{x} & \mathbf{e}_1 & \cdots & \mathbf{e}_n \end{bmatrix}.$$
	We view the columns $\mathbf{x}, \mathbf{e}_1, \ldots, \mathbf{e}_n \colon \SO(n+1) \to S^n$ as vector-valued functions, and the map $\mathbf{x} \colon \SO(n+1) \to S^n$ as a principal $\SO(n)$-bundle.  In fact, $\SO(n+1) \cong F_{\SO(n)}S^n$ can be identified with the oriented orthonormal frame bundle of $S^n$.
	
	For computations, let $\eta \in \Omega^1(\SO(n+1); \mathfrak{so}(n+1))$ denote the Maurer-Cartan form, so that $\eta = \mathbf{g}^{-1}d\mathbf{g}$.  As a matrix-valued $1$-form, we write
	$$\eta = \begin{bmatrix} 0 & -\alpha_i \\ \alpha_i & \beta_{ij} \end{bmatrix}$$
	where $\beta_{ij} = -\beta_{ji}$.  Geometrically, the round metric $g_{\mathrm{rd}}$ on $S^n$ pulls back to 
	$$\mathbf{x}^*g_{\mathrm{rd}} = \alpha_1^2 + \cdots + \alpha_n^2,$$
	and its Levi-Civita connection form is the matrix-valued $1$-form $(\beta_{ij}) \in \Omega^1(F_{\SO(n)}S^n; \mathfrak{so}(n))$. Now, expanding the equation $d\mathbf{g} = \mathbf{g}\eta$, we obtain the \emph{structure equations}
	\begin{equation}
		\begin{aligned} \label{eq:StrEqns}
			d\mathbf{x} & = \alpha_i \mathbf{e}_i \\
			d\mathbf{e}_i & = -\alpha_i \mathbf{x} + \beta_{ij} \mathbf{e}_j.
		\end{aligned}
	\end{equation}

\subsubsection{Homogeneous hypersurfaces}

	Suppose a compact Lie group $\G$ acts on the $n$-sphere $S^n$ with cohomogeneity one via a linear isometric action $\rho \colon \G \to \SO(n+1)$. Let $\Hg$ denote the principal isotropy group and suppose the distance between the singular orbits is $r.$
	
	Suppose $h : [0,r] \to S^n$ is a geodesic in $S^n$ intersecting each $\G$-orbit exactly once and such that $h(0)$ and $h(r)$ lie in the singular $\G$-orbits. Let $\widetilde{h} : [0,r] \to \SO(n+1)$ be a lift of $h$ to $\SO(n+1)$ along the coset projection $\mathbf{x} : \SO(n+1) \to S^n = \SO(n+1) / \SO(n).$ 
	
	Consider the map $\psi \colon \G \times [0,r] \to \SO(n+1)$ given by
	\begin{equation*}
		\psi : (g, \theta) \mapsto \rho(g) \widetilde{h}( \theta).
	\end{equation*}
	For a fixed value of $\theta$, we obtain a map $\psi_\theta : \G \to \SO(n+1)$ defined by $\psi_\theta(g) = \psi(g, \theta).$ This map $\psi_\theta$ descends to a map $\varphi_\theta \colon \G / \Hg \to S^n$ given by $\varphi_\theta(gH) = \rho(g)(h(\theta))$.
	We obtain a commutative diagram
	\begin{equation*}
		\begin{tikzcd}
			{\mathrm{G}} & {\mathrm{SO}(n+1)} \\
			{\mathrm{G}/\mathrm{H}} & {S^n}
			\arrow["{\psi_\theta}", from=1-1, to=1-2]
			\arrow["\pi"', from=1-1, to=2-1]
			\arrow["\mathbf{x}", from=1-2, to=2-2]
			\arrow["{\varphi_\theta}"', from=2-1, to=2-2]
		\end{tikzcd}
	\end{equation*}
	The image $M_\theta := \varphi_\theta(\G / \Hg)$ for $\theta \in [0,r]$ is the $\G$-orbit of the element $h(\theta) \in S^n.$ In particular, if $\theta \in (0,r),$ then $\varphi_\theta$ is an embedding and $M_\theta$ is an isoparametric hypersurface in $S^n.$ The elements of $\psi_\theta(\G) \subset \SO(n+1)$ may be thought of as orthonormal frames for $S^n$ adapted to the intrinsic geometry of the hypersurface $\varphi_\theta(\G / \Hg)$, and $\psi_\theta(\G)$ itself is the principal $\Hg$-bundle over $\varphi_\theta(\G / \Hg)$ consisting of all such adapted $\Hg$-frames.

	Let $\omega \in \Omega^1(G; \mathfrak{g})$ denote the $\mathfrak{g}$-valued left-invariant Maurer-Cartan form on $\G$. Letting $\mathbf{f} \colon \G \to \mathrm{M}_{n+1}(\R)$ denote the embedding given by $\rho$, we have $\rho_*(\omega) = \mathbf{f}^{-1}d\mathbf{f}$. By expanding $\psi^*(\eta) = (\mathbf{f}\widetilde{h})^{-1} d(\mathbf{f}\widetilde{h}),$ we calculate
	\begin{equation*}
		\psi^* ( \eta ) = \widetilde{h}^{-1}(\theta) \rho_* (\omega) \widetilde{h}(\theta) + \widetilde{h}^{-1}(\theta) d (\widetilde{h}(\theta)).
	\end{equation*}
	Therefore, on the hypersurface $M_\theta$, we have
	\begin{equation} \label{eq:EtaAlphaBeta-Relation}
		\psi_\theta^* ( \eta ) = \widetilde{h}(\theta)^{-1} \rho_*(\omega) \widetilde{h}(\theta).
	\end{equation}
	Using this relation, we may write the components $\alpha_i$ and $\beta_{ij}$ of $\eta$ in terms of the Maurer-Cartan form of $\G.$ This allows us to easily compute the induced metric and second fundamental form of $M_\theta$ in examples.
	
	\subsubsection{Differentiation on $M_\theta$} \label{subsubsec:Differentiation}
	
	Let $Y_i,$ be a basis for the Lie algebra $\mathfrak{g}$ orthonormal for the Killing form and adapted so that $Y_1, \ldots, Y_{m}$ span $\mathfrak{h}$ and $Y_{m+1}, \ldots, Y_{m+n-1}$ span the isotropy representation $\mathfrak{m}$ of $\G / \Hg$. Using this basis, we may write the Maurer-Cartan form $\omega$ of $\G$ as $\omega = \sum \omega_i Y_i.$ 
	
	The technique of the previous section allows us to express the induced metric $g$ on $M_\theta$ in terms of the $\omega_i$ as
	\begin{equation*}
		\pi^*g = \sum_{i,j = m+1}^{n+m-1} S_{ij} \omega_i \omega_j
	\end{equation*}
	for some constant positive definite symmetric matrix $S_{ij}.$ We may diagonalize $S_{ij}$ to obtain an orthonormal coframe $\xi_1, \ldots, \xi_{n-1}$ for $\mathfrak{m}$ so that 
	\begin{equation*}
		\pi^*g = \sum_{i=1}^{n-1} \xi_i^2.
	\end{equation*}
	
	If $f$ is a function on $\G / \Hg,$ then we may pull back $f$ to $\G$ along the coset projection $\pi : \G \to \G / \Hg.$ We may then express $d (\pi^* f) = \pi^* (d f)$ in terms of the coframe $\xi_i$ as
	\begin{equation*}
		\pi^* df = \sum_{i=1}^{n-1} f_i \xi_i
	\end{equation*}
	for some functions $f_i$ on $\G.$ The functions $f_i$ for $ 1 \leq i \leq n-1$ may be thought of as the components of a section of the associated bundle $ \G \times_\Hg \mathfrak{m},$ i.e., the tangent bundle of $\G / \Hg.$ If $v_i$ are the vector fields on $\G$ dual to $\xi_i,$ then the vector field $\sum_{i=1}^{n-1} f_i v_i$ on $\G$ descends to give a vector field on $\G / \Hg$, namely the gradient $\nabla^g f.$  The Laplacian of $f$ is given by $\Delta f = *_g d *_g d f.$
	
	By combining the above discussion with the equations (\ref{eq:StrEqns}) and the expression for the 1-forms $\alpha_i,$ $\beta_{ij}$ in terms of $\omega_i,$ we may now compute the gradient and Laplacian of any function of the components of the vectors $\mathbf{x}$ and $\mathbf{e}_i.$
	
	\section{The $\SO(5)$-Invariant Minimal Hypersurface in $S^9$}
	
	\indent In this section, we implement the steps of $\S$\ref{subsec:Method} to compute the Laplace spectrum of the $\SO(5)$-invariant minimal hypersurface in $S^9$, and thereby calculate its Morse index and nullity.  We continue with the notation of $\S$\ref{subsec:MovingFrames}, taking $n = 9$, $\mathrm{G} = \SO(5)$, $\mathrm{H} = \mathrm{T}^2$, and $\rho = \mathrm{Ad} \colon \SO(5) \to \SO(10)$.  Moreover, $\eta$ and $\omega$ will denote the Maurer-Cartan forms on $\SO(10)$ and $\SO(5)$, respectively.

	\subsection{The Minimal Hypersurface $M \subset S^9$}
	
	\subsubsection{The $\SO(5)$-action on $S^9$}
	
	We will view the ambient space as $\R^{10} = \mathfrak{so}(5)$, or sometimes as $\R^{10} = \Lambda^2(\R^5)$.  We take the following basis of $\R^{10}$:
	\begin{align*}
		Y_1 & = E_{23} & Y_3 & = E_{12} & Y_7 & = \textstyle \frac{1}{\sqrt{2}}( E_{24} + E_{35} ) \\
		Y_2 & = E_{45} & Y_4 & = E_{13} & Y_8 & = \textstyle \frac{1}{\sqrt{2}}( E_{25} - E_{34} ) \\
		&  & Y_5 & = E_{14} & Y_9 & = \textstyle \frac{1}{\sqrt{2}}( E_{24} - E_{35} ) \\
		&  & Y_6 & = E_{15} & Y_{10} & = \textstyle \frac{1}{\sqrt{2}}( E_{25} + E_{34} ).
	\end{align*}
	Here, $E_{ij}$ is the matrix with $1$ in the $(i,j)$-entry, and $-1$ in the $(j,i)$-entry, and $0$'s elsewhere.  This basis of $\R^{10}$ is orthonormal with respect to the positive-definite inner product
	$$\langle X,Y \rangle = \textstyle -\frac{1}{6}\mathrm{tr}(\mathrm{ad}_X \circ \mathrm{ad}_Y) = -\frac{1}{2}\mathrm{tr}(XY).$$
	
	\indent Now, the Lie group $\G = \SO(5)$ acts on $\R^{10} = \mathfrak{so}(5)$ via the adjoint action, yielding an $\SO(5)$-action on the unit $9$-sphere $S^9 = \{ X \in \mathfrak{so}(5) \colon |X|^2 = 1\}$.  Let $\mathrm{T}^2 \leq \SO(5)$ the maximal torus whose Lie algebra is 
	$$\mathfrak{t}^2 = \mathrm{span}(Y_1, Y_2).$$
	Then the $\mathrm{T}^2$-action on $\mathfrak{so}(5)/\mathfrak{t}^2$ gives the $T^2$-invariant decomposition
	\begin{equation} \label{eq:T2Decomp}
		\frac{\mathfrak{so}(5)}{\mathfrak{t}^2} = (Y_3, Y_4)_{1,0} \oplus (Y_5, Y_6)_{0,1} \oplus (Y_7, Y_8)_{-1,1} \oplus (Y_9, Y_{10})_{1,1}
	\end{equation}
	where the subscripts denote the weights. It is well-known that the principal orbits of the $\SO(5)$-action on $S^9$ action have stabilizer $\Hg = \mathrm{T}^2$, and are therefore equivariantly diffeomorphic to $N := \SO(5)/\mathrm{T}^2$.
	
	\subsubsection{Invariant metrics on $N$}
	
	We pause to consider the abstract homogeneous space $N = \SO(5)/\mathrm{T}^2$.  As in $\S$\ref{subsec:MovingFrames}, we write the Maurer-Cartan form of $\SO(5)$ as $\omega = \sum \omega_i Y_i$ for $1$-forms $\omega_1, \ldots, \omega_{10} \in \Omega^1(\SO(5))$.  Then every $\SO(5)$-invariant metric on $N$ pulls back via the coset projection $\pi \colon \SO(5) \to N$ to
	$$\pi^*g_{a_1, a_2, a_3, a_4} = a_1(\omega_3^2 + \omega_4^2) + a_2(\omega_5^2 + \omega_6^2) + a_3(\omega_7^2 + \omega_8^2) + a_4(\omega_9^2 + \omega_{10}^2),$$
	where $a_i > 0$ are positive constants.  Note that the case $a_1 = a_2 = a_3 = a_4 = 1$ corresponds to the normal homogeneous metric.

	\subsubsection{The isoparametric hypersurfaces $M_\theta \subset S^9$}
	
	\indent We now study the principal $\SO(5)$-orbits in $S^9$.  By a theorem of Cartan, every element of $
	\R^{10} = \mathfrak{so}(5)$ is $\Ad_{\SO(5)}$-equivalent to an element in $\mathfrak{t}^2$.  In fact, the curve $h \colon [0, \frac{\pi}{4}] \to S^9$ given by
	$$h(\theta) = \cos(\theta) Y_1 + \sin(\theta)Y_2$$
	is a geodesic in $S^9$ that intersects each $\SO(5)$-orbit exactly once, and the principal orbits correspond to $\theta \in (0, \frac{\pi}{4})$.  For $\theta \in (0, \frac{\pi}{4})$, we let $\varphi_\theta \colon \SO(5)/\mathrm{T}^2 \to S^9$ denote the orbit map $\varphi_\theta(g\mathrm{T}^2) = \mathrm{Ad}_g(h(\theta))$, and let
	$$M_\theta := \varphi_\theta(N) \subset S^9$$
	denote the orbit of $h(\theta) \in S^9$. \\
	\indent To perform computations, we work on the level of Lie groups.  For this, we let $\widetilde{h} \colon [0, \frac{\pi}{4}] \to \SO(10)$ denote the following lift of $h$:
	$$\widetilde{h}(\theta) = \begin{pmatrix}
		\cos \theta & -\sin \theta & 0 \\
		\sin \theta & \cos \theta & 0 \\
		0 & 0 & \mathrm{Id}_8 \end{pmatrix}\!,$$
	and let $\psi_\theta \colon \SO(5) \to \SO(10)$ denote $\psi_\theta(g) = \mathrm{Ad}_g \circ \widetilde{h}(\theta)$, as in $\S$\ref{subsec:MovingFrames}.
	Altogether, we have a commutative diagram
	\begin{equation*}
		\begin{tikzcd}
			\SO(5) \arrow[r, "\psi_\theta"] \arrow[d, "\pi"'] & \SO(10) \arrow[d, "\mathbf{x}"] \\
			N \arrow[r, "\varphi_\theta"']  & S^9           
		\end{tikzcd}
	\end{equation*}
	Elements $(\mathbf{x}, \mathbf{e}_1, \ldots, \mathbf{e}_9)$ of $\SO(5) \leq \SO(10)$ may be viewed as adapted $\mathrm{T}^2$-frames on $M_\theta$.  Geometrically, at a point $p \in M_\theta$, we have that $\mathbf{x} = p$ is the position vector, $\mathbf{e}_1 = \mathbf{n}$ is the normal vector to $M_\theta \subset S^9$ at $p$, and in view of (\ref{eq:T2Decomp}), there is a decomposition
	$$T_pM_\theta = T_{1,0} \oplus T_{0,1} \oplus T_{1,-1} \oplus T_{1,1}$$
	where
	\begin{align*}
		T_{1,0} & = \mathrm{span}(\mathbf{e}_2, \mathbf{e}_3), & T_{0,1} & = \mathrm{span}(\mathbf{e}_4, \mathbf{e}_5), & T_{1,-1} & = \mathrm{span}(\mathbf{e}_6, \mathbf{e}_7), & T_{1,1} & = \mathrm{span}(\mathbf{e}_8, \mathbf{e}_9). \\
	\end{align*}
	
	\indent Working on $\SO(5)$ and suppressing pullbacks via $\psi_\theta \colon \SO(5) \to \SO(10)$ from the notation, an application of equation (\ref{eq:EtaAlphaBeta-Relation}) yields:
	\begin{align*}
		\alpha_1 & = 0 \\
		\begin{bmatrix} \alpha_2 \\ \alpha_3 \end{bmatrix} & = \cos \theta \begin{bmatrix} -\omega_4 \\ \omega_3 \end{bmatrix} & \begin{bmatrix} \alpha_6 \\ \alpha_7 \end{bmatrix} & = (\sin \theta - \cos \theta) \begin{bmatrix} -\omega_8 \\ \omega_7 \end{bmatrix} \\
		\begin{bmatrix} \alpha_4 \\ \alpha_5 \end{bmatrix} & = \sin \theta \begin{bmatrix} -\omega_6 \\ \omega_5 \end{bmatrix} & \begin{bmatrix} \alpha_8 \\ \alpha_9 \end{bmatrix} & = (\sin \theta + \cos \theta) \begin{bmatrix} -\omega_{10} \\ \omega_9 \end{bmatrix}
	\end{align*}
	Therefore, the induced metric on $M_\theta \subset S^9$ has
	\begin{align*}
		a_1 & = \cos^2\theta  & a_3 & = 1 - 2 \sin \theta \cos \theta \\
		a_2 & = \sin^2\theta & a_4 & = 1 + 2 \sin \theta \cos \theta.
	\end{align*}
	Moreover, the symmetric $2$-tensor on $\SO(5)$ given by $\SFF =  \alpha_2 \circ \beta_{12} +  \alpha_3 \circ \beta_{13} + \cdots \alpha_9 \circ \beta_{19}$
	descends to $N$ as the second fundamental form of $M_\theta \subset S^9$.  In fact, using (\ref{eq:EtaAlphaBeta-Relation}), we may calculate
	\begin{align*}
		\begin{bmatrix} \beta_{12} \\ \beta_{13} \end{bmatrix} & = \tan \theta \begin{bmatrix} \alpha_2 \\ \alpha_3 \end{bmatrix} & \begin{bmatrix} \beta_{16} \\ \beta_{17} \end{bmatrix} & = \frac{1 + \tan \theta}{1 - \tan \theta} \begin{bmatrix} \alpha_6 \\ \alpha_7 \end{bmatrix} \\
		\begin{bmatrix} \beta_{14} \\ \beta_{15} \end{bmatrix} & = -\cot \theta \begin{bmatrix} \alpha_4 \\ \alpha_5 \end{bmatrix} & \begin{bmatrix} \beta_{18} \\ \beta_{19} \end{bmatrix} & = \frac{\tan \theta - 1}{1 + \tan \theta} \begin{bmatrix} \alpha_8 \\ \alpha_9 \end{bmatrix}
	\end{align*}
	This verifies that $M_\theta \subset S^9$ is isoparametric with four distinct principal curvatures.  Its mean curvature is
	$$H(M_\theta) =  \frac{4 \sin(8\theta)}{\cos(8\theta) - 1},$$
	so that the \emph{minimal} isoparametric hypersurface has $\theta = \frac{\pi}{8}$.
	
	\subsubsection{The minimal hypersurface $M \subset S^9$} From now on, we fix $\theta = \frac{\pi}{8}$, and let $M := M_{\frac{\pi}{8}}$ denote the minimal isoparametric hypersurface in the $M_\theta$ family.  The induced metric $g_M$ on $M$ has
	\begin{equation}\label{eq:so5met}
		\begin{aligned}
			a_1 & = \textstyle \frac{1}{2} + \frac{1}{4}\sqrt{2}  & a_3 & = \textstyle 1 - \frac{1}{2}\sqrt{2} \\
			a_2 & = \textstyle \frac{1}{2} - \frac{1}{4}\sqrt{2} & a_4 & = \textstyle 1 + \frac{1}{2}\sqrt{2}.
		\end{aligned}
	\end{equation}
	By definition, $M$ consists of those skew-symmetric matrices in $\mathfrak{so}(5) = \R^{10}$ that are $\SO(5)$-conjugate to $h(\frac{\pi}{8}) = \cos(\frac{\pi}{8}) Y_1 + \sin(\frac{\pi}{8})Y_2$.  Thus,
	$$M = \textstyle \left\{ X \in \mathfrak{so}(5) \colon X \text{ has eigenvalues } 0,\, \pm i \cos(\frac{\pi}{8}),\, \pm i \sin(\frac{\pi}{8}) \right\}\!.$$
	It can be checked by direct computation that the Cartan-M\"{u}nzner polynomial of $M$ is the quartic $F \colon \R^{10} \to \R$ given by
	$$F(X) = 2 \left| X \wedge X \right|^2 - |X|^4,$$
	where we are viewing elements of $\R^{10} \cong \Lambda^2(\R^5)$ as $2$-forms.  Consequently, $M = F^{-1}(0) \cap S^9$.
	
	\begin{rmk} Later, we will need a specific adapted frame.  At the point $\mathbf{x} = \cos(\frac{\pi}{8})Y_1 + \sin(\frac{\pi}{8})Y_2$, we may choose $\mathbf{e}_1 = \textstyle -\sin(\frac{\pi}{8})Y_1 + \cos(\frac{\pi}{8})Y_2$ and
		\begin{align} \label{eq:SpecificFrame}
			\mathbf{e}_2 & = Y_4 & \mathbf{e}_4 & = Y_6 & \mathbf{e}_6 & = -Y_8 & \mathbf{e}_8 & = Y_{10} \\
			\mathbf{e}_3 & = -Y_3 & \mathbf{e}_5 & = -Y_5  & \mathbf{e}_7 & = Y_7 & \mathbf{e}_9 & = -Y_9. \notag
		\end{align}
	\end{rmk}
	
	\subsection{Decomposition of $L^2(M)$}
	
	\subsubsection{$\SO(5)$ representation theory}
	
	The complex Lie algebra $\mathfrak{so}(5; \C)$ has Cartan subalgebra $\mathfrak{t}^{\C} = \mathrm{span}_{\C}\{Y_1, Y_2\}$.  Let $\{\varepsilon_1, \varepsilon_2\}$ be the basis of $(\mathfrak{t}^{\C})^*$ that is dual to $\{iY_1, iY_2\}$.  It is well-known that $\mathfrak{so}(5; \C)$ has roots
	$$\{\pm \varepsilon_1, \pm \varepsilon_2, \pm(\varepsilon_1 + \varepsilon_2), \pm (\varepsilon_1 - \varepsilon_2)\}$$
	and fundamental weights $\omega_1 = \varepsilon_1$ and $\omega_2 = \frac{1}{2}(\varepsilon_1 + \varepsilon_2)$. \\
	\indent We let $V_{k,\ell}$ denote the irreducible complex $\SO(5)$-representation of highest weight $k\omega_1 + \ell \omega_2$, for $k,\ell \in \Z_{\geq 0}$.  The following examples will be particularly important in the sequel:
	\begin{align*}
		V_{1,0} & = \C^5 & V_{0,2} & = \mathfrak{so}(5;\C) = \Lambda^2(\C^5) \simeq \C^{10} \\
		V_{2,0} & = \Sym^2_0(\C^5) \simeq \C^{14} & V_{1,2} & \simeq \C^{35} \ \subset  \ \Lambda^2(\mathfrak{so}(5;\C)) \simeq \C^{45}
	\end{align*}
	Explicitly, let $\{v_1, \ldots, v_5\}$ be the standard basis of $V_{1,0} = \C^5$, let $\{Y_1, \ldots, Y_{10}\}$ be the basis of $V_{0,2} = \mathfrak{so}(5;\C)$ defined above, and let $\{ z_i z_j \colon 1 \leq i \leq j \leq 5 \}$ be the standard basis of $\Sym^2(\C^5)$.  A computation shows that the following are highest weight vectors in their respective representations:
	\begin{align*}
		v_2 + iv_3 & \in V_{1,0} & Y_9 + iY_{10} & \in V_{0,2} \\
		z_2^2  - z_3^2 + 2iz_2z_3 & \in V_{2,0} & (Y_3+iY_4) \wedge (Y_9 + iY_{10}) & \in V_{1,2}
	\end{align*}
	Accordingly, we define projection maps $\pi_{k,\ell} \colon V_{k, \ell} \to \C$ as follows:
	\begin{equation} \label{eq:HighestWeightProj-SO(5)}
		\begin{aligned}
			\pi_{1,0}( \textstyle \sum c_i v_i) & = c_2 + i c_3 \\
			\pi_{0,2}( \textstyle \sum c_i Y_i ) & = c_9 + ic_{10} \\
			\pi_{2,0}( \textstyle \sum c_{ij} z_i z_j) & = c_{22} - c_{33} + 2i c_{23}  \\
			\textstyle \pi_{1,2}(\sum (c_{i,j} - c_{j,i}) Y_i \wedge Y_j) & = (c_{9,3} - c_{3,9} + c_{4,10} - c_{10,4}) + i(c_{10,3} - c_{3,10} + c_{9,4} - c_{4,9}).
		\end{aligned}
	\end{equation}

	\subsubsection{The multiplicity formula (Step 1)}
	
	By Frobenius Reciprocity (Proposition \ref{prop:FrobRecGen}), there is an $\SO(5)$-invariant decomposition
	$$L^2(M; \C) = \bigoplus_{k, \ell \geq 0} V_{k, \ell}^{\oplus m(k, \ell)}$$
	where the multiplicity of $V_{k,\ell}$ is
	$$m(k, \ell) = \dim_{\C}\{ v \in V_{k,\ell} \colon h \cdot v = v, \ \forall h \in T^2\}.$$
	Since the isotropy group is the maximal torus $T^2$, the vector space $\{ v \in V_{k,\ell} \colon h \cdot v = v, \ \forall h \in T^2\}$ is simply the weight space of $V_{k,\ell}$ for weight $\mu = 0$, which is known \cite{bourbaki02} to have dimension
	\begin{align*}
		m(k,2\ell) & = \ell(k+1) + \left\lfloor \frac{k}{2} \right\rfloor + 1, \\
		m(k,2\ell+1) & = 0. 
	\end{align*}
	Explicitly, the decomposition reads as follows:
	\begin{align} \label{eq:L2Decomp}
		L^2(M; \C) & = V_{0,0} \oplus V_{1,0} \oplus  (V_{2,0} \oplus V_{2,0})  \oplus (V_{3,0} \oplus V_{3,0}) \oplus \cdots \\
		& \oplus (V_{0,2} \oplus V_{0,2}) \oplus (V_{1,2} \oplus V_{1,2} \oplus V_{1,2}) \oplus \cdots \notag \\
		& \oplus \cdots \notag
	\end{align}
	Our objective is to express $\Delta_M$ on each isotypical summand $V_{k,2\ell}^{\oplus m(k,\ell)} = V_{k,2\ell} \oplus \cdots \oplus V_{k,2\ell}$.
	
	\subsection{Explicit equivariant functions (Step 2a)}
	
	We now seek a sufficiently large supply of $\SO(5)$-equivariant functions $M \to V_{k, 2\ell}$, guided by the expansion (\ref{eq:L2Decomp}).  Two such functions are:
	\begin{enumerate}
		\item The complexified position function $\mathbf{x} \colon M \to V_{0,2} = \C^{10}$.
		\item The complexified normal vector $\mathbf{n} = \mathbf{e}_1 \colon M \to V_{0,2} = \C^{10}$.
	\end{enumerate}
	
	\noindent A third equivariant function is given by:
	
	\begin{prop} The image of $\mathbf{x} \wedge \mathbf{n} \colon M \to \Lambda^2(\C^{10})$ lies in $V_{1,2} \simeq \C^{35}$.
	\end{prop}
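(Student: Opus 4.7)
The plan is to reduce the statement to the fact that $\mathbf{x}$ and $\mathbf{n}$ commute as elements of $\mathfrak{so}(5)$. First I would decompose the target $\Lambda^2(\mathfrak{so}(5;\C)) \simeq \C^{45}$ as an $\SO(5)$-module. Since the Lie bracket $[\,\cdot\,,\,\cdot\,]\colon \Lambda^2\mathfrak{so}(5;\C) \to \mathfrak{so}(5;\C) = V_{0,2}$ is $\SO(5)$-equivariant and surjective, its kernel is a $35$-dimensional $\SO(5)$-submodule. On the other hand, the vector $(Y_3+iY_4) \wedge (Y_9+iY_{10})$ is a highest weight vector of weight $\omega_1 + 2\omega_2$ in $\Lambda^2(\mathfrak{so}(5;\C))$, a weight whose isotypic multiplicity in $V_{0,2}$ is zero. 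Hence the kernel must contain (and therefore equal, by dimension) a copy of $V_{1,2}$, giving
\begin{equation*}
\Lambda^2(\mathfrak{so}(5;\C)) \,=\, V_{0,2} \,\oplus\, V_{1,2},
\end{equation*}
with $V_{1,2} = \ker[\,\cdot\,,\,\cdot\,]$.

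It therefore suffices to prove that the pointwise bracket $[\mathbf{x},\mathbf{n}]$ vanishes identically on $M$. At the distinguished point $p_0 \in M$ with $\mathbf{x}(p_0) = \cos(\tfrac{\pi}{8})Y_1 + \sin(\tfrac{\pi}{8})Y_2$ and adapted normal $\mathbf{n}(p_0) = \mathbf{e}_1(p_0) = -\sin(\tfrac{\pi}{8})Y_1 + \cos(\tfrac{\pi}{8})Y_2$ (as fixed in the preceding remark), both vectors lie in the abelian subalgebra $\mathfrak{t}^2 = \mathrm{span}(Y_1, Y_2)$, so $[\mathbf{x}(p_0), \mathbf{n}(p_0)] = 0$.

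To propagate this to all of $M$, I would use $\SO(5)$-equivariance. The maps $\mathbf{x}, \mathbf{n}\colon M \to \mathfrak{so}(5;\C)$ are both $\SO(5)$-equivariant with respect to the adjoint action, and the Lie bracket is $\SO(5)$-equivariant, so $[\mathbf{x},\mathbf{n}]\colon M \to V_{0,2}$ is $\SO(5)$-equivariant. Since $M$ is a single $\SO(5)$-orbit and the map vanishes at $p_0$, it vanishes on $M$. Consequently $\mathbf{x}\wedge \mathbf{n}$ takes values in the kernel subspace $V_{1,2} \subset \Lambda^2(\mathfrak{so}(5;\C))$, as claimed.

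The argument is largely structural and I do not anticipate a serious obstacle; the only slightly delicate step is confirming that the multiplicity of the weight $\omega_1 + 2\omega_2$ inside $V_{0,2}$ is zero (needed to rule out that the kernel of $[\,\cdot\,,\,\cdot\,]$ could instead be a second copy of $V_{0,2}$), but this is immediate from the list of weights of the adjoint representation.
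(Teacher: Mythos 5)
Your proof is correct, and it verifies the key membership claim by a genuinely different (and more structural) route than the paper. The paper also rests on the decomposition $\Lambda^2(\mathfrak{so}(5;\C)) \cong V_{1,2} \oplus V_{0,2}$, but it checks that $\mathbf{x}\wedge\mathbf{n}$ avoids the $V_{0,2}$ summand by writing $\mathbf{n} = \tfrac14\nabla F$ and examining the explicit components $x_i \partial F/\partial x_j - x_j \partial F/\partial x_i$ of the quartic Cartan--M\"unzner polynomial, observing directly that they satisfy the ten linear relations cutting out the $35$-dimensional summand. You instead identify that summand invariantly as the kernel of the equivariant bracket map $\Lambda^2\mathfrak{so}(5;\C) \to V_{0,2}$ (correctly ruling out the alternative via the weight $\omega_1+2\omega_2$ and a dimension count), and then reduce everything to $[\mathbf{x},\mathbf{n}]=0$, which follows because at the base point $h(\pi/8)$ both $\mathbf{x}$ and $\mathbf{n}=\mathbf{e}_1$ lie in the abelian Cartan $\mathfrak{t}^2$, and the bracket $[\mathbf{x},\mathbf{n}]$ is an $\SO(5)$-equivariant map on a single orbit, hence vanishes identically. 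This buys conceptual clarity and avoids any computation with the quartic $F$; the paper's version buys brevity at the cost of an ``upon examination'' step that the reader must reconstruct. The one point worth making explicit in your write-up is the equivariance of $\mathbf{n}$ itself (immediate either from $\mathbf{n}=\tfrac14\nabla F$ with $F$ being $\SO(5)$-invariant, or from connectedness of $\SO(5)$ preserving the coorientation), but this is routine.
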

	
	\begin{proof} Note that $\mathbf{n} = \frac{1}{4}\nabla F$, where $\nabla$ is the gradient in $\R^{10}$.  Upon examination of
		$$\mathbf{x} \wedge \nabla F(\mathbf{x}) = (x_i Y_i) \wedge \left( \frac{\partial F}{\partial x_j} Y_j \right) = \left( x_i \frac{\partial F}{\partial x_j} - x_j \frac{\partial F}{\partial x_i} \right) Y_i \wedge Y_j,$$
		we see that it lies in a codimension $10$ subspace of $\Lambda^2(\C^{10})$.  In view of the $\SO(5)$-irreducible decomposition $\Lambda^2(\C^{10}) \cong V_{1,2} \oplus V_{0,2}$, we deduce the result.
	\end{proof}

	\indent Next, consider the quadratic function
	\begin{align*}
		q \colon \mathfrak{so}(5; \C) \cong \Lambda^2(\C^5) & \to \C^5 \\
		q(X) & := \ast(X \wedge X).
	\end{align*}
	Note that $|q(X)|^2 = 0$ if and only if $X$ is a decomposable $2$-form.  Explicitly, for an element $x_i Y_i \in \mathfrak{so}(5;\C) \cong \Lambda^2(\C^5)$, we have
	\begin{equation} \label{eq:Q-explicit}
		q(x_i Y_i) = \begin{bmatrix}
			2\,x_{{1}}x_{{2}}-{x_{{7}}}^{2}-{x_{{8}}}^{2}+{x_{{9}}}^{2}+{x_{{10}}}^{2} \\
			\sqrt {2}x_{{5}}x_{{7}}-\sqrt {2}x_{{5}}x_{{9}}+\sqrt {2} x_{{6}}x_{{8}}- \sqrt {2}x_{{6}}x_{{10}}-2\,x_{{2}}x_{{4}} \\
			-\sqrt {2}x_{{5}}x_{{8}}-\sqrt {2}x_{{5}}x_{{10}}+\sqrt {2}x_{{6}}x_{{7}}+\sqrt {2}x_{{6}}x_{{9}}+2\,x_{{2}}x_{{3}} \\
			-\sqrt {2}x_{{3}}x_{{7}}+\sqrt {2}x_{{3}}x_{{9}}+\sqrt {2}x_{{4}}x_{{8}}+\sqrt {2}x_{{4}}x_{{10}}-2\,x_{{6}}x_{{1}} \\
			-\sqrt {2}x_{{3}}x_{{8}}+\sqrt {2}x_{{3}}x_{{10}}-\sqrt {2}x_{{4}}x_{{7}}-\sqrt {2}x_{{4}}x_{{9}}+2\,x_{{5}}x_{{1}}
		\end{bmatrix}\!.
	\end{equation}
	
	Finally, define an quadratic map
	\begin{align*}
		s \colon \mathfrak{so}(5; \C) & \to \Sym^2_0(\C^5) \\
		s(X) & = \textstyle X^2 - \frac{1}{5}\mathrm{tr}(X^2)\,\mathrm{Id}_5
	\end{align*} 
	where $\mathrm{Id}_5$ is the $5 \times 5$ identity matrix. \\
	
	\noindent \textbf{Summary:} We now have five natural $\SO(5)$-equivariant functions on $M$:
	\begin{enumerate}
		\item The complexified position function $\mathbf{x} \colon M \to V_{0,2}$.
		\item The complexified normal vector $\mathbf{n} \colon M \to V_{0,2}$.
		\item The wedge product $\mathbf{x} \wedge \mathbf{n} \colon M \to V_{1,2}$.
		\item The quadratic function $q(\mathbf{x}) \colon M \to V_{1,0}$.
		\item The quadratic function $s(\mathbf{x}) \colon M \to V_{2,0}$.
	\end{enumerate}
	
	\begin{rmk} It turns out that the first four of these yield eigenfunctions because
		\begin{align*}
			\Delta \mathbf{x} & = -8\,\mathbf{x} & \Delta (\mathbf{x} \wedge \mathbf{n}) & = -32\,(\mathbf{x} \wedge \mathbf{n}) \\
			\Delta \mathbf{n} & = -24\,\mathbf{n} & \Delta q(\mathbf{x}) & = -16\,q(\mathbf{x}).
		\end{align*}
		This was observed in \cite{solomon90i,solomon90ii}, and can also be verified using the structure equations (\ref{eq:StrEqns}).  By contrast, using (\ref{eq:StrEqns}), one can compute
		$$\Delta \!\begin{bmatrix} s(\mathbf{x}) \\ s(\mathbf{n}) \end{bmatrix} = \begin{pmatrix} -18 & -2 \\ -14 & -46 \end{pmatrix} \begin{bmatrix} s(\mathbf{x}) \\ s(\mathbf{n}) \end{bmatrix}$$
		Upon diagonalizing this matrix, one finds that the components of $\left(-7 \pm 2\sqrt{14}\right) s(\mathbf{x}) + 7s(\mathbf{n})$ are eigenfunctions with eigenvalue $-32 \mp 4\sqrt{14}$.
	\end{rmk}
	
	\subsection{Decomposition of isotypical summands (Step 2b)}
	
	Recalling the maps $\pi_{k,\ell} \colon V_{k,\ell} \to \C$ in (\ref{eq:HighestWeightProj-SO(5)}), we define the following five functions $M \to \C$:
	\begin{align*}
		\kappa & := \pi_{1,0} \circ q(\mathbf{x}) &  \zeta & := \pi_{0,2} \circ \mathbf{x} & \rho & := \pi_{1,2} \circ (\mathbf{x} \wedge \mathbf{n}) \\
		\sigma & := \pi_{2,0} \circ s(\mathbf{x}) & \nu & := \pi_{0,2} \circ \mathbf{n}.
	\end{align*}
	Explicitly, from the expressions in (\ref{eq:HighestWeightProj-SO(5)}), we have:
	\begin{equation}\label{eq:hwvectsso5}
		\begin{aligned}
			\zeta(x_iY_i) & = x_9 + ix_{10}, \\
			\kappa(x_iY_i) & = \sqrt {2}x_{{5}}x_{{7}} - \sqrt {2}x_{{5}}x_{{9}} + \sqrt{2} x_{{6}}x_{{8}} - \sqrt{2} x_{{6}} x_{{10}} - 2 x_{{2}}x_{{4}} \\
			& \ \ \ \  + i \!\left( -\sqrt {2}x_{{5}}x_{{8}} - \sqrt {2}x_{{5}}x_{{10}}+ \sqrt{2} x_{{6}}x_{{7}} + \sqrt {2} x_{{6}} x_{{9}} + 2 x_{{2}}x_{{3}} \right)\!, \\
			\sigma(x_iY_i) & = -x_3^2 + x_4^2 - 2x_7 x_9 - 2x_8 x_{10} + i\!\left( -2x_3 x_4 - 2x_7 x_{10} + 2 x_8 x_9 \right)\!, \\
			\nu(x_iY_i) & = \frac{1}{4}\left( \frac{\partial F}{\partial x_9} + i \frac{\partial F}{\partial x_{10}} \right)\!. \\
			\rho(x_iY_i) & = \frac{1}{4}(x_9 + ix_{10}) \left( \frac{\partial F}{\partial x_3} + i\frac{\partial F}{\partial x_4} \right) -  \frac{1}{4}\left( \frac{\partial F}{\partial x_9} + i \frac{\partial F}{\partial x_{10}} \right) (x_3 + ix_4).
		\end{aligned}
	\end{equation}
	Now, for $u,v,x,y,z \in \Z_{\geq 0}$, we define
	$$U_{u,v,x,y,z} := \mathrm{span}_{\C}\! \left\{ g^\#( \zeta^u \kappa^v \nu^x \rho^y \sigma^z) \colon g \in \SO(5) \right\} \subset L^2(M;\C),$$
	where $g^\#$ denotes the $\SO(5)$-action on $L^2(M; \C)$ discussed in $\S$\ref{subsec:Method}.  By Proposition \ref{prop:SpanHighestWeight}(c), we have an isomorphism of  $\SO(5)$-modules
	$$U_{u,v,x,y,z} \cong V_{v+y+2z,\,2(u+x+y)}.$$
	\indent We will now prove that, for example,
	\begin{align*}
		V_{2,0}^{\oplus m(2,0)} & \cong U_{0,2,0,0,0} + U_{0,0,0,0,1} = \mathrm{span}_{\C}\{g^\sharp (\kappa^2), \, g^\sharp (\sigma) \colon g \in \SO(5)\} \\
		V_{1,2}^{\oplus m(1,2)} & \cong U_{0,0,0,1,0} + U_{1,1,0,0,0} + U_{0,1,1,0,0} = \mathrm{span}_{\C}\{g^\sharp (\rho),\, g^\sharp (\zeta \kappa),\, g^\sharp (\kappa \nu) \colon g \in \SO(5)\}
	\end{align*}
	More generally:
	
	\begin{prop} \label{prop:Isotypical-SO5} Define
		$$W(k,\ell) := \mathrm{span}\left\{ U_{u,v,x,y,z} \colon u,v,x,y,z \in \Z_{\geq 0},\ y \in \{0,1\},\ v+y+2z = k,\ u+x+y = \ell \right\}\!.$$
		Then
		$$V_{k, 2\ell}^{\oplus m(k, 2\ell)} \cong W(k,\ell).$$
		Consequently,
		$$L^2(M;\C) = \bigoplus_{k, \ell \geq 0} W(k, \ell).$$
	\end{prop}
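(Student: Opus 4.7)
The proof I propose rests on Proposition \ref{prop:SpanHighestWeight} combined with a monomial count. By Proposition \ref{prop:SpanHighestWeight}(b), each monomial $\zeta^u \kappa^v \nu^x \rho^y \sigma^z$ is a highest weight function whose weight is the sum of the weights of its factors. Since $\zeta, \kappa, \nu, \rho, \sigma$ have weights $(0,2), (1,0), (0,2), (1,2), (2,0)$ respectively, this sum is $(v+y+2z,\,2(u+x+y))$. Thus the constraints in the definition of $W(k,\ell)$ force each generating monomial to be a highest weight function of weight $(k, 2\ell)$, and by Proposition \ref{prop:SpanHighestWeight}(c) we have $W(k,\ell)$ contained in the $V_{k,2\ell}$-isotypical summand of $L^2(M;\C)$.

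I would next carry out the elementary count of the tuples $(u,v,x,y,z)$ defining $W(k,\ell)$. Partitioning by the value of $y$, the case $y=0$ contributes $(\lfloor k/2 \rfloor + 1)(\ell+1)$ monomials and the case $y=1$ contributes $(\lfloor (k-1)/2 \rfloor + 1)\,\ell$ monomials. A short case analysis on the parity of $k$ shows the total equals $\ell(k+1) + \lfloor k/2 \rfloor + 1 = m(k, 2\ell)$. Since Proposition \ref{prop:SpanHighestWeight}(a) asserts that the space of highest weight functions of weight $(k, 2\ell)$ has complex dimension $m(k, 2\ell)$, it suffices to verify that these $m(k, 2\ell)$ monomials are $\C$-linearly independent in $L^2(M;\C)$; once this is established, they form a basis, and Proposition \ref{prop:SpanHighestWeight}(c) together with \eqref{eq:IsotypicalID} yields $W(k,\ell) \cong V_{k, 2\ell}^{\oplus m(k, 2\ell)}$. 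The decomposition $L^2(M;\C) = \bigoplus_{k,\ell \geq 0} W(k,\ell)$ then follows from Frobenius Reciprocity (Proposition \ref{prop:FrobRecGen}).

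The principal obstacle is linear independence, which is equivalent to verifying that the truncation $y \in \{0,1\}$ sacrifices no degrees of freedom. My plan is to extract a single defining relation at weight $(2,4)$. There the total number of unconstrained monomials $\zeta^u \kappa^v \nu^x \rho^y \sigma^z$ of that weight is nine, whereas $m(2,4) = 8$, so exactly one relation must hold on $M$; by the shape of the candidates this relation must express $\rho^2$ as a $\C$-linear combination of the eight other monomials. I would determine this relation explicitly using the coordinate formulas \eqref{eq:hwvectsso5} for $\zeta, \kappa, \nu, \rho, \sigma$ in conjunction with the defining equations $|\mathbf{x}|^2 = 1$, $F(\mathbf{x}) = 0$, $\mathbf{x}\cdot\mathbf{n}=0$, and $|\mathbf{n}|^2 = 1$ that cut out the adapted frame over $M$, reducing to a finite linear-algebra problem among quadratic polynomials on $\R^{10}$. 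Multiplying this single quadratic relation by arbitrary monomials in $\zeta, \kappa, \nu, \sigma$ reduces every monomial with $y \geq 2$ to a $\C$-linear combination of monomials with $y \in \{0,1\}$; since such reductions, together with the count from the second paragraph, already saturate the upper bound $m(k, 2\ell)$, no further relations can appear at any bidegree, and the $m(k, 2\ell)$ chosen monomials must be linearly independent.

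As a backup verification --- useful for checking the computation concretely at small weights --- linear independence can also be confirmed numerically by parametrizing a neighborhood of $h(\pi/8) \in M$ via the $\SO(5)$-action together with the adapted frame \eqref{eq:SpecificFrame}, evaluating the $m(k, 2\ell)$ monomials on this slice, and verifying that the resulting evaluation matrix has full rank.
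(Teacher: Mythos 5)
Your first two paragraphs match the paper: the weight bookkeeping via Proposition \ref{prop:SpanHighestWeight}(b)--(c) and the count showing that the constrained monomials number exactly $m(k,2\ell)$ are both present in the paper's argument (the count is the final step of Lemma \ref{lem:LinIndep-SO5}). The difficulty, as you correctly identify, is linear independence --- but your proposed argument for it has a genuine gap.

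The gap is a circularity in the third paragraph. At weight $(2,4)$ you assert that since there are nine unconstrained monomials and $m(2,4)=8$, ``exactly one relation must hold.'' Proposition \ref{prop:SpanHighestWeight}(a) gives only the upper bound $\dim \leq 8$ for the span of those nine monomials, so you may conclude \emph{at least} one relation; ``exactly one'' already presupposes that the eight constrained monomials are independent, which is what you are trying to prove. The same problem recurs globally: from (i) the constrained monomials span the same subspace as the unconstrained ones (your reduction modulo the $\rho^2$ relation), (ii) that subspace sits inside the $m(k,2\ell)$-dimensional space of highest weight functions, and (iii) the constrained monomials number $m(k,2\ell)$, one cannot deduce independence --- the subspace could simply be a proper subspace, with additional relations not generated by the $(2,4)$ relation. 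To close the argument you would need to show that the monomials actually \emph{span} the full space of highest weight functions of each weight, and nothing in your proposal establishes that. (Your numerical backup would verify independence only at finitely many weights, not for all $(k,\ell)$.) The paper avoids this entirely by proving independence directly: it evaluates a putative linear relation on the explicit circle of points $p_t = 2^{-1/4}(\cos t\, Y_3 - \sin t\, Y_5) + \tfrac12(4-2\sqrt2)^{1/2} Y_7 \in M$, where $\zeta(p_t)=0$ and the remaining functions restrict to the trigonometric monomials of Lemma \ref{lem:Trigonometry}; the linear independence of those trigonometric functions forces the top-order coefficients to vanish, and a double induction (on $\ell$, then on $k$ via divisibility by $\zeta$, $\kappa$, or $\kappa^2$) finishes the proof. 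Some such direct evaluation or an independent spanning argument is needed; the counting alone cannot do the work.
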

	
	\noindent Proposition \ref{prop:Isotypical-SO5} is a consequence of the following two lemmas.
	
	\begin{lem} \label{lem:Trigonometry} Fix $k$ and let 
		\begin{align*}
			f_i(t) & = (\sin t)^{k-2i} (\cos t)^{2i}, & g_i(t) & = (\sin t)^{k-1-2i} (\cos t)^{2i+1}.
		\end{align*}
		Then the set of functions $\{ f_0, \ldots, f_{\lfloor k/2 \rfloor} \} \cup \{ g_0, \ldots, g_{\lfloor (k-1)/2 \rfloor} \}$ is linearly independent.
	\end{lem}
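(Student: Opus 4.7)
The plan is to observe that the collection $\{f_0, \ldots, f_{\lfloor k/2 \rfloor}\} \cup \{g_0, \ldots, g_{\lfloor (k-1)/2 \rfloor}\}$ is nothing more than the complete list of degree-$k$ monomials in $\sin t$ and $\cos t$. Indeed, $f_i(t) = (\sin t)^{k-j}(\cos t)^j$ with $j = 2i$ even, while $g_i(t) = (\sin t)^{k-j}(\cos t)^j$ with $j = 2i+1$ odd, and as $i$ ranges over the allowed values the exponent $j$ takes each value in $\{0, 1, \ldots, k\}$ exactly once (total count $\lfloor k/2 \rfloor + \lfloor (k-1)/2 \rfloor + 2 = k+1$). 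Thus the lemma reduces to showing that the $k+1$ functions $(\sin t)^{k-j}(\cos t)^j$ for $0 \le j \le k$ are linearly independent over $\C$.

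For the reduced problem, the plan is to apply the substitution $u = \tan t$. If $\sum_{j=0}^k c_j (\sin t)^{k-j}(\cos t)^j = 0$ holds for all $t$, then on the interval $\left(-\tfrac{\pi}{2}, \tfrac{\pi}{2}\right)$ one may divide by $(\cos t)^k$ to obtain $\sum_{j=0}^k c_j (\tan t)^{k-j} = 0$. Because $\tan$ maps this interval bijectively onto $\R$, the polynomial $\sum_{j=0}^k c_j u^{k-j}$ in $u$ must vanish identically, which forces every $c_j = 0$.

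There is no genuine obstacle here; the whole argument is elementary. The only point worth a brief sanity check is the indexing bookkeeping in the first paragraph, which one verifies separately in the cases $k = 2m$ and $k = 2m+1$ to confirm that the $f_i$ exhaust the even values of $j$ up to $k$ while the $g_i$ exhaust the odd values, with no gap or repetition.
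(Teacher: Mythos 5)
Your proof is correct, and it takes a genuinely different (and somewhat more direct) route than the paper. The paper first splits any linear relation into its even and odd parts in $t$ to handle $\{f_i\}$ and $\{g_i\}$ separately, and then uses the Pythagorean identity via the telescoping sums $\sum_{i=0}^{p}\binom{p}{i} f_i = (\sin t)^{k-2p}$ to reduce to the linear independence of distinct powers of $\sin t$. You instead observe that the given collection is exactly the full set of $k+1$ degree-$k$ monomials $(\sin t)^{k-j}(\cos t)^{j}$, $0 \le j \le k$ (your indexing check is right: the $f_i$ realize every even $j$ and the $g_i$ every odd $j$, with no overlap), and then divide by $(\cos t)^{k}$ on $\left(-\tfrac{\pi}{2},\tfrac{\pi}{2}\right)$ to convert a putative relation into a real polynomial in $u=\tan t$ vanishing on all of $\R$. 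This trades the paper's parity-plus-binomial manipulation for the single substitution $u=\tan t$; it is shorter, and it makes transparent why the count is $k+1$, which is what the dimension count $m(k,2\ell)$ in Lemma \ref{lem:LinIndep-SO5} ultimately relies on. The only point worth making explicit is that the linear relation need only hold on an interval where $\cos t \neq 0$, which is automatic since the hypothesis is an identity in $t$.
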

	
	\begin{proof} If $k$ is even, each $f_i$ is even and each $g_i$ is odd, and if $k$ is odd, each $f_i$ is odd and each $g_i$ is even, so it suffices to prove that the sets $\{f_i(t)\}$ and $\{g_i(t)\}$ are individually linearly independent.  For this, note that
		\begin{align*}
			f_i+f_{i+1} &= (\sin t)^{k-2i} (\cos t)^{2i} + (\sin t)^{k-2i-2} (\cos t)^{2i+2} = (\sin t)^{k-2i-2} (\cos t)^{2i}, \\
			g_i+g_{i+1} &= (\sin t)^{k-1-2i} (\cos t)^{2i+1} + (\sin t)^{k-3-2i} (\cos t)^{2i+3} = (\sin t)^{k-2i-3} (\cos t)^{2i+1},
		\end{align*}
		so that
		\begin{align*}
			\sum_{i=0}^p \binom{p}{i} f_i(t) & = (\sin t)^{k-2p} & \sum_{i=0}^p \binom{p}{i} g_i(t) & = (\sin t)^{k-1-2p} (\cos t).
		\end{align*}
		Since the sets $\{ (\sin t)^{k-2p} \colon 0 \leq p \leq \lfloor k/2 \rfloor\}$ and $\{ (\sin t)^{k-1-2p} (\cos t) \colon 0 \leq p \leq \lfloor (k-1)/2 \rfloor\}$ are linearly independent, it follows that $\{f_i(t)\}$ and $\{g_i(t)\}$ are, too.
	\end{proof}

 Define the following subsets of $W(k, \ell)$:
		\begin{align*}
			\mathcal{U}_{k,\ell} & := \left\{ \kappa^{k -2 i} \sigma^{i} \zeta^{\ell-j} \nu^j   \colon 0 \leq i \leq \lfloor k/2 \rfloor, \, 0 \leq j \leq \ell \right\}, \\
			\mathcal{V}_{k,\ell} & := \left\{  \rho \kappa^{k-1 -2 i} \sigma^{i} \zeta^{\ell-1-j} \nu^j  \colon 0 \leq i \leq \lfloor (k-1)/2 \rfloor, \, 0 \leq j \leq \ell - 1 \right\}.
		\end{align*}
	
	\begin{lem} \label{lem:LinIndep-SO5} The set $\mathcal{U}_{k,\ell} \cup \mathcal{V}_{k,\ell}$ is linearly independent.  Moreover,
		$$\dim( \mathrm{span}\{ \mathcal{U}_{k,\ell} \cup \mathcal{V}_{k,\ell}\}) = m(k,2\ell).$$
	\end{lem}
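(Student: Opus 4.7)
The dimension equality $|\mathcal{U}_{k,\ell}| + |\mathcal{V}_{k,\ell}| = m(k,2\ell)$ follows from the arithmetic identities $(\lfloor k/2\rfloor + 1)(\ell + 1) + (\lfloor (k-1)/2\rfloor + 1)\ell = \ell(k+1) + \lfloor k/2\rfloor + 1$, checked by splitting on the parity of $k$, so the content of the lemma is the linear independence assertion.

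My plan is to reduce linear independence to Lemma \ref{lem:Trigonometry} by restricting all the monomials to a cleverly chosen two-parameter family of points in $M$. Concretely, I would pick a family $p \colon (\phi,\psi) \mapsto p(\phi,\psi) = \mathrm{Ad}_{g(\phi,\psi)} h(\pi/8) \in M$ for some $g(\phi,\psi) \in \SO(5)$ of the form $g(\phi,\psi) = \exp(\phi A)\exp(\psi B)$ with $A,B$ chosen among the basis vectors $Y_3,\dots,Y_{10}$. The goal is to arrange the restrictions of the five highest weight generators so that, up to nonvanishing multiplicative factors, $\kappa|_p \propto \sin\phi$, $\sigma|_p \propto \cos^2\phi$, $\rho|_p \propto \cos\phi$ in the $\phi$-direction, and $\zeta|_p \propto \sin\psi$, $\nu|_p \propto \cos\psi$ in the $\psi$-direction.

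With such a parametrization, each monomial $\kappa^{k-2i}\sigma^i \zeta^{\ell-j}\nu^j \in \mathcal{U}_{k,\ell}$ restricts to a nonzero scalar multiple of $f_i(\phi)\cdot (\sin\psi)^{\ell-j}(\cos\psi)^j$, and each monomial $\rho\kappa^{k-1-2i}\sigma^i\zeta^{\ell-1-j}\nu^j \in \mathcal{V}_{k,\ell}$ restricts to a scalar multiple of $g_i(\phi)\cdot (\sin\psi)^{\ell-1-j}(\cos\psi)^j$, where $f_i,g_i$ are exactly the functions in Lemma \ref{lem:Trigonometry}. Linear independence of the whole family then follows by separating variables: Lemma \ref{lem:Trigonometry} gives independence of $\{f_i\}\cup\{g_i\}$ in $\phi$, while a standard trigonometric argument (passing to Fourier modes in $\psi$) gives independence of $(\sin\psi)^a(\cos\psi)^b$ across the relevant exponents. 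A dependence relation among the $\mathcal{U}_{k,\ell}\cup\mathcal{V}_{k,\ell}$ would descend to a nontrivial relation in one of the two variables, giving a contradiction.

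The principal obstacle is constructing the two-parameter family $g(\phi,\psi)$ explicitly and verifying that all five highest weight functions obey the required separation-of-variables scaling, \emph{especially} for the higher-degree generator $\rho$, whose explicit formula in (\ref{eq:hwvectsso5}) is a quartic polynomial in the $x_i$ and thus susceptible to unwanted cancellations. In practice, one computes $\Ad_{g(\phi,\psi)}h(\pi/8)$ using the commutator calculations in $\mathfrak{so}(5)$, substitutes the resulting expressions for $x_1(\phi,\psi),\dots,x_{10}(\phi,\psi)$ into the formulas of (\ref{eq:hwvectsso5}), and confirms that the $\phi$- and $\psi$-dependences factor as claimed. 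The delicate step is verifying this for $\rho$; if the first obvious choice of $(A,B)$ fails to produce factorization, one would need to iterate through root-vector choices — or, in the worst case, settle for a weaker evaluation scheme that directly produces a non-degenerate transition matrix between $\mathcal{U}_{k,\ell}\cup\mathcal{V}_{k,\ell}$ and an explicit basis of trigonometric monomials on $p(\phi,\psi)$.
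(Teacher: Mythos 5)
There is a genuine gap: your entire argument rests on the existence of a two-parameter family $p(\phi,\psi)\subset M$ on which all five generators simultaneously separate variables ($\kappa\propto\sin\phi$, $\sigma\propto\cos^2\phi$, $\rho\propto\cos\phi$, $\zeta\propto\sin\psi$, $\nu\propto\cos\psi$, all with nonvanishing constants), and you never construct it — indeed you flag its construction as the ``principal obstacle'' and concede it may fail for $\rho$. That construction is not a routine verification to be deferred: it is the entire content of the proof, and it is far from clear that such a family exists. In particular, the syzygy that forces $y\in\{0,1\}$ in Proposition \ref{prop:Isotypical-SO5} (an identity expressing $\rho^2$ in terms of the weight-$(2,4)$ monomials in $\kappa,\sigma,\zeta,\nu$) imposes nontrivial compatibility conditions on any putative separated restriction, so the existence of your family cannot be taken for granted. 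The separation-of-variables logic downstream of the construction is fine (fix $\psi$, apply Lemma \ref{lem:Trigonometry} in $\phi$, then use independence of the degree-$\ell$ and degree-$(\ell-1)$ trigonometric monomials in $\psi$), and your dimension count is correct, but as written the proof has a hole exactly where the work is.

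The paper's proof shows that this strong construction is unnecessary. It uses only a \emph{one}-parameter circle $p_t=2^{-1/4}(\cos t\,Y_3-\sin t\,Y_5)+\tfrac12(4-2\sqrt2)^{1/2}Y_7$ on which $\zeta\equiv 0$, $\nu$ is a nonzero constant, and $\kappa,\rho,\sigma$ restrict to $\sin t$, $\cos t$, $\cos^2 t$ up to constants. Evaluating a putative relation on this circle kills every term containing a positive power of $\zeta$, and Lemma \ref{lem:Trigonometry} then forces the surviving coefficients ($j=\ell$ in $\mathcal U$, $j=\ell-1$ in $\mathcal V$) to vanish; the remaining relation is divisible by $\zeta$, which sets up an induction on $\ell$. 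The base case $\ell=0$ is handled by a second induction on $k$, factoring out $\kappa$ (odd $k$) or evaluating at $p_0$ and factoring out $\kappa^2$ (even $k$). This is essentially the ``weaker evaluation scheme'' you mention in your last sentence; to repair your argument you should develop that route rather than search for a fully separated two-parameter family.
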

	
	\begin{proof} We will argue by induction, reasoning as follows. Let $T(k,l)$ be the statement
		$$\text{``$\mathcal{U}_{k,l} \cup \mathcal{V}_{k,l}$ is linearly independent."}$$
		The base case $T(0,0)$ is trivial (note that $\mathcal{V}_{0,0}$ is empty). We will first prove $T(k,\ell-1) \implies T(k,\ell)$ for $\ell >0$.  Then we will prove both $T(2r,0) \implies T(2r+1,0)$ and $T(2r,0) \implies T(2r+2,0)$ for $r>0$.
		
		Before beginning, note that the 1-parameter family of points
		\begin{equation*}
			p_t = 2^{-1/4} \left( \cos t \, Y_3 - \sin t \, Y_5 \right) + \tfrac{1}{2} (4 - 2 \sqrt{2})^{1/2} Y_7 \in \mathfrak{so}(5) \simeq \R^{10}
		\end{equation*}
		satisfy $F(p_t) = 0$ and hence are all elements of $M.$ Moreover,
		\begin{align*}
			\zeta ( p_t ) &= 0, & \nu(p_t) &= -2^{-1/2} (4 - 2 \sqrt{2})^{1/2}, \\
			\kappa (p_t) & = - 2^{-3/4} (4 - 2 \sqrt{2})^{1/2} \sin t, &  \sigma(p_t) &= - \sqrt{2} \cos^2 (t) \\
			\rho (p_t) &= 2^{-3/4} (4 - 2 \sqrt{2})^{1/2} \cos t.
		\end{align*}
		
		First, fix $\ell > 0$ and suppose that we have a linear relation in $\mathcal{U}_{k,\ell} \cup \mathcal{V}_{k,\ell}$ written as
		\begin{equation} \label{eq:supposedlinrel}
			\sum_i \sum_j a_{ij} \kappa^{k -2 i} \sigma^{i} \zeta^{\ell-j} \nu^j +  \sum_i \sum_j b_{ij} \rho \kappa^{k-1-2 i} \sigma^{i} \zeta^{\ell-1-j} \nu^j = 0.
		\end{equation}
		Evaluating at $p_t$ and absorbing constants into $a_{ij}, b_{ij}$, we get
		\begin{equation*}
			\sum_i a'_{i\ell} \left( \sin t \right)^{k-2i} \left( \cos t \right)^{2i} +  \sum_i b'_{i, \ell-1} \left( \sin t \right)^{k-1-2i} \left( \cos t \right)^{2i+1} = 0,
		\end{equation*}
		where the $a'_{i \ell}$ and $b'_{i, \ell-1}$ are obtained from $a_{i \ell}$ and $b_{i, \ell-1}$ by rescaling by some non-zero constant (which will depend on $i$, $k$, $\ell$). Now, by Lemma \ref{lem:Trigonometry}, the set of functions
		\begin{equation*}
			\lbrace \left( \sin t \right)^{k-2i} \left( \cos t \right)^{2i} \mid 0 \leq i \leq \lfloor k/2 \rfloor \rbrace \cup \lbrace \left( \sin t \right)^{k-1-2i} \left( \cos t \right)^{2i+1} \mid 0 \leq i \leq \lfloor (k-1)/2 \rfloor \rbrace
		\end{equation*}
		is linearly independent. Therefore, $a'_{i\ell}$ and $b'_{i, \ell-1}$ must vanish for all $i$, which implies $a_{i \ell}$ and $b_{i,\ell-1}$ vanish for all $i$, too. Consequently, the left-hand side of (\ref{eq:supposedlinrel}) is of the form $\zeta F$, where $F \in \mathrm{span}\{\mathcal{U}_{k, \ell-1} \cup \mathcal{V}_{k, \ell-1}\}$.  Since $\zeta$ is not the zero function, we obtain $F = 0$. Therefore, linear independence of $\mathcal{U}_{k, \ell-1} \cup \mathcal{V}_{k, \ell-1}$ implies linear independence of $\mathcal{U}_{k, \ell} \cup \mathcal{V}_{k, \ell}$. \\
		\indent Next, we show that $\mathcal{U}_{k,0} \cup \mathcal{V}_{k,0}$ is linearly independent. In fact, $\mathcal{V}_{k,0} = \varnothing,$ so we need to show $\mathcal{U}_{k,0}$ is linearly independent.  In the case where $k = 2r+1$ is odd, any linear relation in $\mathcal{U}_{2r+1, 0}$ is of the form $\kappa F = 0$, where $F \in \mathrm{span}\{ \mathcal{U}_{2r, 0}\}$.  Thus, linear independence of $\mathcal{U}_{2r,0}$ implies that of $\mathcal{U}_{2r+1,0}$. In the case where $k = 2r$ is even, consider a linear relation in $\mathcal{U}_{2r,0}$, which we may write as
		\begin{equation*}
			\sum_{i=0}^r a_i \kappa^{2r-2i} \sigma^i = 0.  
		\end{equation*}
		Evaluating at the point $p_0$, we have $a_r = 0,$ so the linear relation is divisible by $\kappa^2$.  Thus, linear independence of $\mathcal{U}_{2r-2,0}$ implies linear independence of $\mathcal{U}_{2r,0}$.  \\
		\indent Finally, we establish the last claim.  For this, we note that the number of elements in $\mathcal{U}_{k,\ell} \cup \mathcal{V}_{k,\ell}$ is
		\begin{align*}
			\textstyle \left(\left\lfloor \frac{k}{2} \right\rfloor + 1\right)(\ell + 1) + \left( \left\lfloor \frac{k-1}{2} \right\rfloor + 1 \right)\ell & \textstyle = \ell \left( \left\lfloor \frac{k}{2} \right\rfloor + \left\lfloor \frac{k-1}{2} \right\rfloor + 2 \right) + \left\lfloor \frac{k}{2} \right\rfloor + 1 \\
			& \textstyle = \ell(k+1) + \left\lfloor \frac{k}{2} \right\rfloor + 1 \\
			& = m(k, 2\ell).
		\end{align*}
	\end{proof}
	
	\indent Lemma \ref{lem:LinIndep-SO5} implies that
	$$W(k,\ell) = \mathrm{span}\{ g^\sharp( \mathcal{U}_{k,\ell} \cup \mathcal{V}_{k,\ell}) \colon g \in \SO(5) \} \cong V_{k,2\ell}^{\oplus m(k,2\ell)},$$
	thereby establishing Proposition \ref{prop:Isotypical-SO5}.
	
	\subsection{Computing $\Delta_M$ on isotypical summands (Step 3)}
	
	For fixed $k, \ell \geq 0$, define functions
	\begin{align*}
		F_i & = \kappa^{k-2i} \sigma^i, \ \ \ 0 \leq i \leq \lfloor k/2 \rfloor, \\
		G_j & = \zeta^{\ell - j} \nu^j, \ \ \ \ 0 \leq j \leq \ell.
	\end{align*}
 Note that, although it is not made clear in the notation, $F_i$ and $G_j$ depend on $k$ and $\ell$ respectively.
By definition of $\mathcal{U}_{k,\ell}$ and $\mathcal{V}_{k+1,\ell+1},$
	\begin{align*}
		& \{F_i G_j \colon 0 \leq i \leq \lfloor k/2 \rfloor, 0 \leq j \leq \ell\} \text{ is a basis of } \mathrm{span}(\mathcal{U}_{k,\ell}), \text{ and} \\
		& \{\rho F_i G_j \colon 0 \leq i \leq \lfloor (k-1)/2 \rfloor, 0 \leq j \leq \ell - 1\} \text{ is a basis of } \mathrm{span}(\mathcal{V}_{k+1,\ell+1}).
	\end{align*}
	The purpose of this section is to compute the Laplacians $\Delta (F_i G_j)$ and $\Delta (\rho F_i G_j)$.  Recalling our convention that the Laplacian is negative semi-definite, the product rule gives
	\begin{align}
		\Delta (F_i G_j) & = F_i \Delta G_j + 2 \nabla F_i \cdot \nabla G_j + G_j \Delta F_i \label{eq:LaplaceFG} \\
		\Delta (\rho F_i G_j) & = \rho \Delta (F_i G_j) + 2 \nabla \rho \cdot \nabla(F_i G_j) + F_i G_j \Delta \rho. \label{eq:LaplacePFG}
	\end{align}
	Thus, repeatedly applying the product rule, we see that we require the pairwise dot products of the gradients $\nabla \zeta$, $\nabla \kappa$, etc., as well as the Laplacians $\Delta \zeta$, $\Delta \kappa$, etc.  These are given by:
	
	\begin{prop} \label{prop:DotProdGradSO(5)} We have
		\begin{align*}
			\nabla \zeta \cdot \nabla \zeta & = - \zeta^2 - \nu^2 & \nabla \kappa \cdot \nabla \kappa &= - 8 \kappa^2 + 4 \sigma \\
			\nabla \zeta \cdot \nabla \nu &= 2 \zeta^2 - 6 \zeta \nu & \nabla \kappa \cdot \nabla \sigma &= -4 \kappa^3 \\
			\nabla \nu \cdot \nabla \nu &= 3 \zeta^2 - 4 \zeta \nu - 9 \nu^2 & \nabla \sigma \cdot \nabla \sigma &= -4 \kappa^4 +8 \kappa^2 \sigma -8 \sigma^2
		\end{align*}
		and
		\begin{align*}
			\nabla \zeta \cdot \nabla \rho &= - 4 \zeta \rho, & \nabla \zeta \cdot \nabla \kappa &= -2\zeta\kappa-2 \nu \kappa  \\
			\nabla \nu \cdot \nabla \rho &= -12 \nu \rho & \nabla \zeta \cdot \nabla \sigma &= -2 \zeta \sigma - 2 \nu \kappa^2 + 2 \nu \sigma  \\
			\nabla \kappa \cdot \nabla \rho &= -8\kappa \rho & \nabla \nu \cdot \nabla \kappa &= -2\zeta \kappa - 6 \nu \kappa  \\
			\nabla \sigma \cdot \nabla \rho &= - 8 \sigma \rho & \nabla \nu \cdot \nabla \sigma &= -6\zeta \kappa^2 + 2 \zeta \sigma - 6 \nu \sigma
		\end{align*}
		and
		\begin{align*}
			\Delta \zeta & = -8\zeta  &    \Delta \kappa &= -16 \kappa & \Delta \rho & = -32 \rho \\
			\Delta \nu & = -24\nu &   \Delta \sigma &= -4 \kappa^2 - 16 \sigma.
		\end{align*}
	\end{prop}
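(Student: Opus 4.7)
The plan is to verify each identity by direct computation, exploiting the highest-weight structure from Section 4.4 to confine each quantity to a small explicitly-known space, and then pinning down the coefficients by differentiation of the polynomial expressions (\ref{eq:hwvectsso5}).

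The first observation is structural. By Proposition \ref{prop:SpanHighestWeight}(d), each $\nabla \zeta_a \cdot \nabla \zeta_b$ is a highest-weight function whose weight is the sum of the weights of $\zeta_a$ and $\zeta_b$, and each $\Delta \zeta_a$ is a highest-weight function of the same weight as $\zeta_a$. Lemma \ref{lem:LinIndep-SO5} then provides an explicit monomial basis of each such target space. A priori, each left-hand side is thus a $\C$-linear combination of a short, known list of monomials in $\zeta, \nu, \kappa, \sigma, \rho$. For example, $\nabla \kappa \cdot \nabla \sigma$ has weight $(3,0)$ and lies in $\mathrm{span}_{\C}\{\kappa^3, \kappa\sigma\}$, while $\nabla \nu \cdot \nabla \sigma$ has weight $(2,2)$ and lies in $\mathrm{span}_{\C}\{\zeta\kappa^2, \nu\kappa^2, \zeta\sigma, \nu\sigma, \rho\kappa\}$.

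For the calculation itself, one proceeds either by the moving-frame method of Section 2.3.3, using the structure equations (\ref{eq:StrEqns}) together with (\ref{eq:EtaAlphaBeta-Relation}) to write $d\zeta, d\nu, \ldots$ in terms of the Maurer-Cartan components of $\SO(5)$ and then contracting against the induced metric (\ref{eq:so5met}), or, equivalently, by extrinsic differentiation in $\R^{10}$. Since $\mathbf{n} = \tfrac{1}{4}\nabla F$ is polynomial in $\mathbf{x}$, each highest-weight function extends to a polynomial on $\R^{10}$, and one has the identities
\begin{align*}
\nabla_M f \cdot \nabla_M g &= \nabla f \cdot \nabla g - (\mathbf{x}\cdot\nabla f)(\mathbf{x}\cdot\nabla g) - (\mathbf{n}\cdot\nabla f)(\mathbf{n}\cdot\nabla g), \\
\Delta_M f &= \Delta f - \mathrm{Hess}\, f(\mathbf{x},\mathbf{x}) - \mathrm{Hess}\, f(\mathbf{n},\mathbf{n}) - 8\,\mathbf{x}\cdot \nabla f,
\end{align*}
where $\nabla, \mathrm{Hess}, \Delta$ on the right are the ambient Euclidean operators, and the $-8$ arises because the mean-curvature vector of $M^8 \subset \R^{10}$ equals $-8\mathbf{x}$ (using that $M$ is $8$-dimensional and minimal in $S^9$). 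These formulas reduce each computation to polynomial manipulation in the $x_i$. As a cross-check, both sides of every identity may be evaluated along the one-parameter family $p_t$ introduced in the proof of Lemma \ref{lem:LinIndep-SO5}; since the target spaces are low-dimensional, a handful of values of $t$ suffices to pin down all coefficients by linear algebra.

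The principal obstacle is combinatorial rather than conceptual. The polynomials $\kappa, \sigma$, and especially $\rho$ each contain several quadratic monomials with $\sqrt{2}$ coefficients (see (\ref{eq:hwvectsso5})), so products of their gradients expand into dozens of monomials in the $x_i$ that must be reorganized and matched against the basis. This is precisely the sort of systematic but lengthy manipulation for which the paper's Conventions flag the use of Maple, and we expect the verification of each identity to reduce to a computer-algebra check.
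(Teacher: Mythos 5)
Your approach is essentially the paper's: both arguments first confine each $\nabla\mu_1\cdot\nabla\mu_2$ and $\Delta\mu$ to the low-dimensional space of highest weight functions of the appropriate weight via Proposition \ref{prop:SpanHighestWeight}(d) and Lemma \ref{lem:LinIndep-SO5}, and then determine the few unknown coefficients by explicit, computer-assisted computation; your extrinsic identities expressing $\nabla_M$ and $\Delta_M$ through the ambient Euclidean operators (with the $-8\,\mathbf{x}\cdot\nabla f$ term from minimality) are a valid substitute for the paper's moving-frame route. One caveat: the curve $p_t$ from Lemma \ref{lem:LinIndep-SO5} satisfies $\zeta(p_t)\equiv 0$ and $\nu(p_t)\equiv\mathrm{const}$, so evaluation along it cannot ``pin down all coefficients by linear algebra'' --- for instance the coefficient of $\zeta\kappa$ in $\nabla\zeta\cdot\nabla\kappa=-2\zeta\kappa-2\nu\kappa$ is invisible on $p_t$. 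The paper instead evaluates on a genuinely generic circle $\Ad(\exp(tY_3)A)\mathbf{f}$; since you present the $p_t$ evaluation only as a cross-check and your primary computation is direct polynomial manipulation, this does not break the proof, but if evaluation on a curve is to determine the coefficients, a sufficiently general curve must be chosen.
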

	
	\begin{proof} Suppose $\mu_1, \mu_2 \in \left\lbrace \zeta, \nu, \rho, \kappa, \sigma \right\rbrace$ have weights $w_1, w_2$ respectively. We know by Proposition \ref{prop:SpanHighestWeight}(d) that $\nabla \mu_1 \cdot \nabla \mu_2$ is a highest weight function of weight $w_1 + w_2$. We have a basis for the space of highest weight functions for a given weight, so $\nabla \mu_1 \cdot \nabla \mu_2$ can be expressed uniquely as a linear combination of elements of this basis. On the other hand, using (\ref{eq:StrEqns}), the explicit formulas of (\ref{eq:hwvectsso5}), and the technique described in $\S$\ref{subsubsec:Differentiation}, we may obtain an expression for $\nabla \mu_1 \cdot \nabla \mu_2$ in terms of the components of an adapted frame $\left( \mathbf{x}, \mathbf{e}_1, \ldots, \mathbf{e}_9 \right).$ By evaluating this expression on a sufficiently general circle of points in $M,$ we may determine the coefficients of the basis expansion of $\nabla \mu_1 \cdot \nabla \mu_2.$ The calculation of the Laplacians works similarly.
		
		We illustrate our technique on $\nabla \zeta \cdot \nabla \kappa.$ Let $\mathbf{f} \in \psi_{\pi/8}\left(\SO(5)\right)$ be the frame given in (\ref{eq:SpecificFrame}), and let $A \in \SO(5)$ be the matrix 
		\begin{equation*}
			A = \left[ \begin {array}{ccccc} \frac{1}{2}\,\sqrt {2}&0&0&0&\frac{1}{2}\,\sqrt {2}
			\\ \noalign{\medskip}0&\frac{1}{2}&0&-\frac{1}{2}\,\sqrt {3}&0\\ \noalign{\medskip}0&0
			&1&0&0\\ \noalign{\medskip}0&\frac{1}{2}\,\sqrt {3}&0&\frac{1}{2}&0
			\\ \noalign{\medskip}-\frac{1}{2}\,\sqrt {2}&0&0&0&\frac{1}{2}\,\sqrt{2}\end {array}
			\right].
		\end{equation*}
		Then the set of elements $\Ad(\exp(tY_3) A) \mathbf{f} \in \SO(10), $ $t \in \left[0, 2\pi \right],$ gives a circle of points in $M$ together with an adapted frame at each point. On this circle we have
		\begin{equation}\label{eq:SOhwfeval}
			\begin{aligned}
				\zeta &= \tfrac{1}{4} \left( -i\sqrt {3}\cos\!\left( t \right) +\sin \!\left( t \right)  \right) \sin\! \left( \tfrac{1}{8}\pi \right) - \tfrac{i \sqrt{6}}{4}\cos \!\left( \tfrac{1}{8}\pi \right), \\
				\nu &= \tfrac{1}{4} \left( -i\sqrt {3}\cos\! \left( t \right) +\sin \!\left( t \right)  \right) \sin\!\left( \tfrac{3}{8}\pi \right) + \tfrac{i \sqrt{6}}{4}\sin \!\left( \tfrac{1}{8}\pi \right),  \\
				\kappa &= -\tfrac{1}{2} \sin\! \left( t \right), \\
				\rho &= - \tfrac{1}{8}\sqrt {3}\,\sin\! \left( t \right) \cos\! \left( t \right) + \tfrac{1}{8}i \cos^2 \!\left( t \right) - \tfrac{1}{2}i, \\
				\sigma &= -\tfrac{1}{4} \cos^2\!\left( t \right) + \tfrac{3}{8}\,\sqrt {2}
				+ \tfrac{1}{4},
			\end{aligned}
		\end{equation}
		and we may compute
		\begin{equation}\label{eq:SOdoteval}
			\nabla \zeta \cdot \nabla \kappa = -\tfrac{i\sqrt{6}}{4} \sin \left( t \right) \cos \left( \tfrac{1}{8}\pi \right) 
			\cos \left( t \right)- \tfrac{i \sqrt{3}}{2} \sin \left( \tfrac{1}{8}\pi \right) 
			\sin \left( t \right) + \tfrac{\sqrt{2}}{4} \cos \left( \tfrac{1}{8}\pi \right) \sin^2\! \left( t \right).
		\end{equation}
		On the other hand, the function $\nabla \zeta \cdot \nabla \kappa$ has weight $(1,2),$ so it must be a linear combination of $\zeta \kappa,$ $\nu \kappa,$ and $\rho.$ Comparing (\ref{eq:SOhwfeval}) and (\ref{eq:SOdoteval}), we find
		\begin{equation*}
			\nabla \zeta \cdot \nabla \kappa = -2\zeta\kappa-2 \nu \kappa.
		\end{equation*}
		The other formulas are calculated in a similar fashion.
	\end{proof}
	
	\begin{prop} \label{prop:LaplaceProductsSO5} We have
		\begin{equation*}
			\begin{aligned}
				\Delta ( F_i G_j ) & = -4 i (i-1) F_{i-2}G_{j} -12 ij F_{i-1}G_{j-1} - 4 i (2k+3 - 6 i) F_{i-1}G_j - 4 i (\ell-j) F_{i-1} G_{j+1} \\
				& + 3 j (j-1) F_i G_{j-2} + 4 j ( \ell -k + 3i - 2j +1 ) F_i G_{j-1} - 4 (\ell - j )(k-3i) F_i G_{j+1} \\
				& - (\ell-j)(\ell-j-1) F_i G_{j+2} + 4 (k-2i)(k - 2 i - 1) F_{i+1} G_j \\
				& + ( -40 i^2 + 8 ij + 32 ik + 4 i\ell +  2 j^2 - 8kj - 10 \ell j - 8 k^2 - 4 k\ell - \ell^2 + 8 i - 8 j - 8 k - 7 \ell ) F_i G_j
			\end{aligned}
		\end{equation*}
		and
		\begin{equation*}
			\begin{aligned}
				\Delta ( \rho F_i G_j ) & = \rho \Delta(F_iG_j) - (32 + 16k - 16 i +8 \ell + 16 j) \rho F_i G_j. \\
			\end{aligned}
		\end{equation*}
	\end{prop}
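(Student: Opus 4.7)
The plan is to derive both formulas by iterated application of the Laplacian product rule, reducing everything to the first-order data catalogued in Proposition \ref{prop:DotProdGradSO(5)}.

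For the first formula I start from equation (\ref{eq:LaplaceFG}). Applying the product rule to $F_i = \kappa^{k-2i}\sigma^i$ gives
\begin{align*}
\Delta F_i &= (k-2i)(k-2i-1)\kappa^{k-2i-2}\sigma^i\,\nabla\kappa\cdot\nabla\kappa + 2i(k-2i)\kappa^{k-2i-1}\sigma^{i-1}\,\nabla\kappa\cdot\nabla\sigma \\
&\quad + i(i-1)\kappa^{k-2i}\sigma^{i-2}\,\nabla\sigma\cdot\nabla\sigma + (k-2i)\kappa^{k-2i-1}\sigma^i\,\Delta\kappa + i\kappa^{k-2i}\sigma^{i-1}\,\Delta\sigma,
\end{align*}
and an analogous expansion for $G_j = \zeta^{\ell-j}\nu^j$ produces $\Delta G_j$ in terms of the dot products $\nabla\zeta\cdot\nabla\zeta$, $\nabla\zeta\cdot\nabla\nu$, $\nabla\nu\cdot\nabla\nu$ and the Laplacians $\Delta\zeta$, $\Delta\nu$. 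Substituting the explicit formulas from Proposition \ref{prop:DotProdGradSO(5)} converts $\Delta F_i$ into a linear combination of $F_{i-2}, F_{i-1}, F_i, F_{i+1}$ with coefficients polynomial in $i, k$, and similarly $\Delta G_j$ becomes a combination of $G_{j-2}, \ldots, G_{j+2}$ with coefficients polynomial in $j, \ell$. For the cross term, I expand $\nabla F_i = (k-2i)\kappa^{k-2i-1}\sigma^i\,\nabla\kappa + i\kappa^{k-2i}\sigma^{i-1}\,\nabla\sigma$ and analogously $\nabla G_j$, then reduce the four resulting dot products $\nabla\kappa\cdot\nabla\zeta$, $\nabla\kappa\cdot\nabla\nu$, $\nabla\sigma\cdot\nabla\zeta$, $\nabla\sigma\cdot\nabla\nu$ using Proposition \ref{prop:DotProdGradSO(5)}. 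Re-indexing every summand into the form $c\cdot F_{i'}G_{j'}$ and collecting gives the stated coefficients.

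For the second formula I apply equation (\ref{eq:LaplacePFG}). The essential simplification here is that each of $\nabla\rho\cdot\nabla\zeta$, $\nabla\rho\cdot\nabla\nu$, $\nabla\rho\cdot\nabla\kappa$, $\nabla\rho\cdot\nabla\sigma$ is a scalar multiple of $\zeta\rho$, $\nu\rho$, $\kappa\rho$, $\sigma\rho$ respectively. Writing $\nabla(F_iG_j) = G_j\nabla F_i + F_i\nabla G_j$ and expanding $\nabla F_i$, $\nabla G_j$ as above, these proportionalities immediately collapse $\nabla\rho\cdot\nabla(F_iG_j)$ into a single scalar multiple of $\rho F_iG_j$ whose coefficient is linear in $(i,j,k,\ell)$. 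Combining with $F_iG_j\,\Delta\rho = -32\rho F_iG_j$ and the term $\rho\,\Delta(F_iG_j)$ then produces the displayed formula.

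The main obstacle is bookkeeping rather than any conceptual difficulty. The full expression for $\Delta(F_iG_j)$ has ten distinct index shifts, and the ``diagonal'' coefficient of $F_iG_j$ is a quadratic form in $(i,j,k,\ell)$ that receives contributions from essentially every branch of the expansion, so the collection step must be performed carefully (as the authors indicate, best done with a computer algebra system).
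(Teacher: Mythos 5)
Your proposal is correct and follows essentially the same route as the paper's proof: the authors likewise expand $\Delta F_i$, $\Delta G_j$, and $2\nabla F_i\cdot\nabla G_j$ via the product rule, substitute the formulas of Proposition \ref{prop:DotProdGradSO(5)}, and collect terms into shifts of $F_iG_j$, and they handle the $\rho$ term exactly by the proportionality observation you make. The bookkeeping you describe checks out against the intermediate identities recorded in the paper.
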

	
		\begin{proof} Using Proposition \ref{prop:DotProdGradSO(5)}, we calculate
			\begin{align}
				\label{eq:DeltaF}  \Delta F_i & = -4i(i-1)F_{i-2} - 4i\left( 2k + 3 -6i \right)F_{i-1} \\
				\notag & \ \ \ \ +  \left( -40i^2 + 32ik + 8i -8k^2 - 8k \right)F_i +  4(k-2i)(k-2i-1) F_{i+1}\\
				\label{eq:DeltaG} \Delta G_j & = 3j(j-1)G_{j-2} + 4j\left(\ell -2j + 1 \right) G_{j-1}  \\
				\notag & + \left( 2j^2 - 10j\ell - 8j - \ell^2 - 7\ell \right) G_j - (\ell-j)(\ell-j-1) G_{j+2} 
			\end{align}
			Next, we compute
			\begin{align*}
				\nabla F_i & = i \kappa^{k-2i} \sigma^{i-1} \nabla \sigma + (k-2i) \kappa^{k-2i-1} \sigma^i \nabla \kappa \\
				\nabla G_j & = j \zeta^{\ell-j} \nu^{j-1} \nabla \nu + (\ell-j)\zeta^{\ell-j-1} \nu^j \nabla \zeta
			\end{align*}
			Taking their dot product, and applying Proposition \ref{prop:DotProdGradSO(5)}, we find
			\begin{align} \label{eq:NablaFG}
				2\nabla F_i \cdot \nabla G_j & = -4\left( -2ij - i\ell + 2jk + k\ell \right) F_i G_j  + 4j\left( 3i - k \right) F_i G_{j-1} \\
				\notag & \ \ \  + 4(\ell - j) \left( 3i - k \right) F_i G_{j+1} - 12ij F_{i-1} G_{j-1} - 4i (\ell-j) F_{i-1} G_{j+1}
			\end{align}
			Finally, plugging (\ref{eq:DeltaF}), (\ref{eq:DeltaG}), and (\ref{eq:NablaFG})  into (\ref{eq:LaplaceFG}) yields the first claim. \\
			\indent For the second claim, we calculate
			$$2 \nabla \rho \cdot \nabla(F_i G_j) = -\rho F_i G_j(-16i + 16j + 16k + 8\ell).$$
			Applying (\ref{eq:LaplacePFG}) gives the result.
		\end{proof}
		
		\subsection{The index and nullity (Step 4)} 
		
		\indent For $k, \ell \geq 0$, let $Y(k, 2\ell) \subset L^2(M;\R)$ denote the real subspace spanned by the real and imaginary parts of the functions in $\mathcal{U}_{k,\ell} \cup \mathcal{V}_{k,\ell}$, and let $\Delta_{k,2\ell} := \Delta|_{Y(k,2\ell)}$ denote the Laplacian of $g_M$ restricted to $Y(k,2\ell)$. \\
		
		\indent We are almost ready to compute the Morse index and nullity of $M \subset S^9$.  For this, we need an exhaustive list of the eigenvalues (and their multiplicities) of $-\Delta$ that are less than $32$.  Now, Proposition \ref{prop:LaplaceProductsSO5} yields the matrix of $\Delta_{k,2\ell}$ with respect to the basis $\mathrm{Re}(\mathcal{U}_{k,\ell} \cup \mathcal{V}_{k,\ell}) \cup \mathrm{Im}(\mathcal{U}_{k,\ell} \cup \mathcal{V}_{k,\ell})$. Thus, it remains to understand which $Y(k,2\ell)$ house eigenvalues less than 32. \\
		\indent For this, we will apply Theorem \ref{thm:MainBound} to the homogeneous space $N = \SO(5)/\mathrm{T}^2$.  To that end, we let $g_1 = \varphi_{\pi/8}^*g_M$ denote the pullback of the minimal isoparametric metric to $N$, and note that $\Delta_{k,2\ell}$ is a self-adjoint endomorphism with respect to the $L^2$ inner product on $Y(k,2\ell)$ given by $(f_1, f_2) = \int_N f_1 f_2\,\vol_{g_1}$.  Next, we let $g_2 = g_\mathrm{nor}$ denote the normal homogeneous metric, and let $L_{k, 2\ell}$ denote the Laplacian of $g_{\mathrm{nor}}$ restricted to $Y(k, 2\ell)$.  By standard theory \cite{yamaguchi79}, we have 
		$$L_{k, 2\ell} = -\mu(k, 2\ell)\,\mathrm{Id},$$
		where $\mu(k, 2\ell)$ is the eigenvalue of the Casimir of $\SO(5)$ on $V_{k,2\ell}$.  By Freudenthal's formula
		$$\mu(k, 2\ell) = (k+\ell)(k+\ell+3) + \ell(\ell+1).$$
		We now have:
		
		\begin{prop} \label{prop:BoundsSO5} Let $\lambda$ be an eigenvalue of $-\Delta_{k, 2\ell}$.  Then: 
			\begin{align*}
				\left( 2 - \sqrt{2} \right) \left[ (k+\ell)(k+\ell+3) + \ell(\ell+1) \right] & \leq \lambda \leq \left( 4 + 2 \sqrt{2} \right)  \left[ (k+\ell)(k+\ell+3) + \ell(\ell+1) \right]
			\end{align*}
		\end{prop}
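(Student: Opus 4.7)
The plan is to apply Theorem~\ref{thm:MainBound} directly to the homogeneous space $N = \SO(5)/\mathrm{T}^2$ with $g_1 = \varphi_{\pi/8}^* g_M$ and $g_2 = g_{\mathrm{nor}}$, the normal homogeneous metric. The key observation is that both $g_1$ and $g_2$ are diagonalized by the same coframe on $\mathfrak{m} = \mathfrak{so}(5)/\mathfrak{t}^2$ arising from the weight decomposition (\ref{eq:T2Decomp}); namely, they both split blockwise according to the four two-dimensional weight spaces $(Y_3,Y_4)$, $(Y_5,Y_6)$, $(Y_7,Y_8)$, $(Y_9,Y_{10})$. Consequently the eigenvalues of $g_1^{-1}$ with respect to $g_2^{-1}$ are simply $1/a_1, 1/a_2, 1/a_3, 1/a_4$.

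Using the values from (\ref{eq:so5met}), a short computation gives
\begin{align*}
\tfrac{1}{a_1} &= 4 - 2\sqrt{2}, & \tfrac{1}{a_2} &= 4 + 2\sqrt{2}, \\
\tfrac{1}{a_3} &= 2 + \sqrt{2}, & \tfrac{1}{a_4} &= 2 - \sqrt{2},
\end{align*}
and so $r_{\min} = 2 - \sqrt{2}$ (attained on the $(Y_9, Y_{10})$-block) and $r_{\max} = 4 + 2\sqrt{2}$ (attained on the $(Y_5, Y_6)$-block).

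For the second input to Theorem~\ref{thm:MainBound}, take $V = Y(k, 2\ell) \otimes_\R \C$. Since $g_2$ is normal, its Laplacian restricted to the isotypical component $V_{k,2\ell}^{\oplus m(k,2\ell)}$ acts as $-\mu(k, 2\ell)\,\mathrm{Id}$, where $\mu(k,2\ell)$ is the Casimir eigenvalue computed via Freudenthal as $\mu(k,2\ell) = (k+\ell)(k+\ell+3) + \ell(\ell+1)$. Hence $\mu_{\min} = \mu_{\max} = \mu(k, 2\ell)$. Plugging these data into Theorem~\ref{thm:MainBound} yields precisely the stated bounds.

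I do not expect a serious obstacle here: the only step requiring a modicum of care is verifying that $g_1$ and $g_2$ are simultaneously diagonalizable (so that ``eigenvalues of $g_1^{-1}$ relative to $g_2^{-1}$" reduce to the ratios $1/a_i$), which is immediate from the fact that the $\mathrm{T}^2$-invariant decomposition (\ref{eq:T2Decomp}) is common to all $\SO(5)$-invariant metrics on $N$ and Schur's lemma forces any such invariant metric to be a positive scalar on each two-dimensional isotypic block. Everything else is arithmetic and an appeal to prior results.
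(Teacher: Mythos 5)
Your proposal is correct and follows essentially the same route as the paper: both apply Theorem~\ref{thm:MainBound} with $g_1 = \varphi_{\pi/8}^*g_M$ and $g_2 = g_{\mathrm{nor}}$, read off $r_{\min} = 2-\sqrt{2}$ and $r_{\max} = 4+2\sqrt{2}$ as the reciprocals of the block coefficients $a_i$ from (\ref{eq:so5met}), and use $\mu_{\min}=\mu_{\max}=\mu(k,2\ell)$ from the Casimir on the normal metric. Your extra remark that Schur's lemma forces simultaneous block-diagonalization is a correct (and slightly more explicit) justification of a step the paper treats as immediate from its displayed formula for $\pi^*g_{a_1,\dots,a_4}$.
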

		
		\begin{proof} As before, we let $\pi \colon \SO(5) \to N$ denote the coset projection.  Recall from (\ref{eq:so5met}) that the normal homogeneous metric $g_{\mathrm{nor}}$ and the minimal isoparametric metric $g_1$ pull back to $\SO(5)$ as
			\begin{align*}
				\pi^*g_{\mathrm{nor}} & = \omega_3^2 + \cdots + \omega_{10}^2 \\
				\pi^*g_1 & = \textstyle \frac{1}{4}(2 + \sqrt{2})(\omega_3^2 + \omega_4^2) + \frac{1}{4}(2 - \sqrt{2})(\omega_5^2 + \omega_6^2) \\
				& \ \ \ \ \textstyle + \frac{1}{2}(2 - \sqrt{2})(\omega_7^2 + \omega_8^2) + \frac{1}{2}(2 + \sqrt{2})(\omega_9^2 + \omega_{10}^2).
			\end{align*}
			In particular, the smallest and largest eigenvalues of $g_1$ with respect to $g_{\mathrm{nor}}$ are $\frac{1}{4}(2-\sqrt{2})$ and $\frac{1}{4}(4 + \sqrt{2})$.  Taking reciprocals, the smallest and largest eigenvalues of the dual metric $g_1^{-1}$ with respect to $g_{\mathrm{nor}}^{-1}$ are $r_{\min} = 2-\sqrt{2}$ and $r_{\max} = 4 + 2 \sqrt{2}$.  Applying Theorem \ref{thm:MainBound} yields the result.
		\end{proof}
		
		\indent We can now understand which eigenspaces of $-\Delta$ have eigenvalue at most $32$.  Indeed, by Proposition \ref{prop:BoundsSO5}, we need only consider the operators $-\Delta_{k,2\ell}$ for $(k,2\ell)$ satisfying
		$$\left( 2 - \sqrt{2} \right) \left[ (k+\ell)(k+\ell+3) + \ell(\ell+1) \right] \leq 32.$$
		That is, we need only consider the following values of $(k,2\ell)$:
		\begin{equation*}
			\begin{aligned}
				(0,0) & & (0,2) & & (0,4) & & (0,6) & & (0,8) \\
				(1,0) & & (1,2) & & (1,4) & & (1,6) & & \\
				(2,0) & & (2,2) & & (2,4) & & (2,6) & & \\
				(3,0) & & (3,2) & & (3,4) & &  & \\
				(4,0) & & (4,2) & & & & \\
				(5,0) & &  & & & & \\
				(6,0) & & & & & &
			\end{aligned}
		\end{equation*}
		Using the matrix representations of $-\Delta_{k,2\ell}$ provided by Proposition \ref{prop:LaplaceProductsSO5}, we find that the following are all the eigenvalues of $-\Delta$ less than or equal to $32$:
		$$\begin{tabular}{| l | c | c |} \hline
			Eigenvalue of $-\Delta$ & $(k,2\ell)$ & Multiplicity \\ \hline \hline
			$\lambda_1 = 0$ & $(0,0)$ & 1 \\ \hline
			$\lambda_2 = 8$ & $(0,2)$ & 10 \\ \hline
			$\lambda_3 = 16$ & $(1,0)$ & 5 \\ \hline 
			$\lambda_4 = 32 - 4 \sqrt{14} \approx 17.03$ & $(2,0)$ & 14 \\ \hline
			$\lambda_5 \approx 18.20$ & $(0,4)$ & 35 \\ \hline
			$\lambda_6 = 24$ & $(0,2)$ & 10 \\ \hline
			$\lambda_7 = 40 - 4 \sqrt{10} \approx 27.35$ & $(1,2)$ & 35 \\  \hline
			$\lambda_8 \approx 28.62$ & $(2,2)$ & 81 \\ \hline
			$\lambda_9 = 64 - 4 \sqrt{70} \approx 30.53$ & $(0,6)$ & 84 \\ \hline \hline
			$\lambda_{10} = 32$ & $(1,2)$ & 35 \\ \hline
		\end{tabular}$$
		\noindent Here, the eigenvalues $\lambda_5$ and $\lambda_8$ are computed as follows:
		\begin{align*}
			\lambda_5 & = \text{min. root of} \:\:\: {x}^{3}-128\,{x}^{2}+4896\,x-52736 \\
			\lambda_8 & = \text{min. root of} \:\:\: {x}^{3}-176\,{x}^{2}+9120\,x-140288
		\end{align*}
		Also, the multiplicity an eigenvalue is given by $\dim(V_{k,2\ell})$.  By Weyl's dimension formula, we have
		\begin{equation*}
			\dim V_{k,2\ell} = \tfrac{1}{6} \left( k+2\,\ell+2 \right)  \left( k+1 \right)  \left( 2\,k+2\,\ell+3 \right)  \left( 2\,\ell+1 \right)\!.
		\end{equation*}
		The information in the above table now yields:
		
		\begin{thm} The index of the $\SO(5)$-invariant minimal isoparametric hypersurface in $S^9$ is 275. Furthermore, the nullity is 35 and every Jacobi field is rotational.
		\end{thm}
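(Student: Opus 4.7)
The plan is to read the Morse index and nullity directly off the spectral data tabulated above, and then to identify the nullspace with the space of rotational Jacobi fields by invoking irreducibility. First I translate the geometry into an eigenvalue count: since $g=4$ and $n=9$, we have $g(n-1)=32$, so $\mathcal{J}_M = -\Delta_M - 32$. Accordingly, $\mathrm{Ind}(M)$ equals the sum of the multiplicities of the eigenvalues of $-\Delta_M$ that are strictly less than $32$, and $\mathrm{Nul}(M)$ equals the multiplicity of the eigenvalue $32$.

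Next I appeal to the table. By Proposition \ref{prop:BoundsSO5}, only the finite list of isotypical summands $Y(k,2\ell)$ enumerated above can house an eigenvalue $\leq 32$, and the matrix of $\Delta$ on each has already been extracted via Proposition \ref{prop:LaplaceProductsSO5}; the displayed list of eigenvalues is therefore exhaustive. Summing the multiplicities of $\lambda_1,\ldots,\lambda_9$ (all strictly less than $32$) yields
\begin{equation*}
1+10+5+14+35+10+35+81+84 = 275,
\end{equation*}
which is the index. The remaining eigenvalue $\lambda_{10}=32$ is contributed by a single copy of $V_{1,2}$ with multiplicity $35$, which is the nullity.

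For the rotational claim, each $X \in \mathfrak{so}(10)$ produces a Jacobi field $f_X(p) = \langle X\mathbf{x}(p), \mathbf{n}(p)\rangle$ on $M$, which is a linear combination of components of $\mathbf{x}\wedge\mathbf{n}$. The identity $\Delta(\mathbf{x}\wedge\mathbf{n}) = -32\,\mathbf{x}\wedge\mathbf{n}$ recorded earlier places every such $f_X$ in the nullspace of $\mathcal{J}_M$. The rotational Jacobi fields thus form a nonzero $\SO(5)$-invariant subspace of the nullspace; since the nullspace is an irreducible copy of $V_{1,2}$, the two coincide. The main obstacle in executing this plan lay not in the present theorem itself but in the groundwork preceding it: establishing the eigenvalue bound precisely enough that the finite search is exhaustive, and diagonalizing $\Delta$ on each candidate summand. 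With those in hand, the theorem reduces to a table lookup and a dimension count.
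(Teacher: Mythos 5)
Your proposal is correct and follows essentially the same route as the paper: the index is the identical sum of multiplicities over the tabulated eigenvalues strictly below $32$, and the nullity is read off as the multiplicity of the eigenvalue $32$. For the rotational claim the paper's primary argument is a dimension count ($\dim\mathfrak{so}(10)-\dim\mathfrak{so}(5)=35=\mathrm{Nul}(M)$), whereas your irreducibility argument is a slight repackaging of the paper's stated alternative (that the $32$-eigenspace is spanned by the components of $\mathbf{x}\wedge\mathbf{n}$); both are valid, and yours has the minor advantage of not needing to verify that the kernel of $X\mapsto\langle X\mathbf{x},\mathbf{n}\rangle$ is exactly $\rho_*(\mathfrak{so}(5))$.
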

		
		\begin{proof} Summing the dimensions of the eigenspaces of $-\Delta$ having eigenvalue less than $32$, we obtain
			\begin{align*}
				\text{Ind}(M) &  = 1 + 10 + 5 + 14 + 35 + 10 + 35 + 81 + 84 = 275.
			\end{align*}
			Moreover, since $\dim(\SO(10)) - \dim(\SO(5)) = 35$, the space of rotational Jacobi fields is $35$-dimensional.  Since the nullity of $M$ is also equal to $35$, we deduce that every Jacobi field is rotational. \\
			\indent (Alternatively, one can observe that the eigenspace of $-\Delta$ having eigenvalue 32 corresponds to $\rho = \pi_{1,2} \circ (\mathbf{x} \wedge \mathbf{n})$, i.e., is spanned by the components of $\mathbf{x} \wedge \mathbf{n}$.  This also shows that every Jacobi field on $M$ is rotational.)
		\end{proof}
		
		\begin{thm} The Laplace spectrum of the minimal isoparametric metric on $\SO(5)/\mathrm{T}^2$ contains eigenvalues that are not expressible in radicals.
		\end{thm}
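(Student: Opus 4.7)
The plan is to exhibit an isotypical summand $Y(k,2\ell)$ whose Laplacian matrix has characteristic polynomial containing an irreducible factor $p(x) \in \mathbb{Q}[x]$ of degree at least five with non-solvable Galois group. Since the roots of $p$ are then Laplace eigenvalues of the minimal metric on $M$, and since an algebraic number is expressible in radicals iff the Galois group of its minimal polynomial over $\mathbb{Q}$ is solvable (Abel--Ruffini), this will imply the theorem.

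First, I would choose $(k,2\ell)$ for which the multiplicity $m(k,2\ell)$ is sufficiently large and the matrix of $\Delta_{k,2\ell}$ does not split into blocks all of dimension less than five. The natural $\mathcal{U}/\mathcal{V}$-decomposition arising from the factor $\rho$ already gives two blocks of sizes $|\mathcal{U}_{k,\ell}|$ and $|\mathcal{V}_{k,\ell}|$, so one wants at least one of these to have dimension at least five. Candidate summands include $(k,2\ell) = (3,4), (2,6), (1,8)$, with $m(k,2\ell) = 10, 11, 9$ respectively. Since Proposition~\ref{prop:LaplaceProductsSO5} furnishes explicit integer matrix entries for $\Delta_{k,2\ell}$ in the basis $\mathcal{U}_{k,\ell} \cup \mathcal{V}_{k,\ell}$, the characteristic polynomial lies in $\mathbb{Z}[x]$ and can be computed and factored in \textsc{Maple}.

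Once an irreducible factor $p(x)$ of degree $n \geq 5$ has been located, I would verify that $\mathrm{Gal}(p/\mathbb{Q})$ is non-solvable. The cleanest route is to show $\mathrm{Gal}(p) = S_n$ for some prime $n \in \{5,7,\ldots\}$: it suffices to check that $p$ is irreducible and has exactly two non-real roots, since complex conjugation then contributes a transposition and Cauchy's theorem (applied to the prime $n$ dividing $|\mathrm{Gal}(p)|$) produces an $n$-cycle, and a transposition together with an $n$-cycle generate $S_n$. The real-root count can be certified rigorously via a Sturm sequence. For non-prime $n$, one can instead reduce $p$ modulo a handful of small primes and read off the cycle types of the Frobenius elements to pin $\mathrm{Gal}(p)$ down to a non-solvable group; in practice this is automated by \textsc{Maple}'s \texttt{galois} routine.

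The main obstacle is the search itself. The cubics appearing in the table of eigenvalues $\leq 32$ all have solvable Galois groups (every cubic is solvable by Cardano), so one must go strictly beyond the Morse-index calculation into larger isotypical components, where the matrices become genuinely unwieldy. Moreover, the $\mathcal{U}/\mathcal{V}$-block structure, combined with any further $\mathbb{R}$-versus-$\mathbb{R} i$ decomposition of $Y(k,2\ell)$, may cause the characteristic polynomial to split into many low-degree factors; it is a priori unclear whether \emph{any} block yields an irreducible factor of degree $\geq 5$ whose Galois group is non-solvable. The plan is therefore to sweep systematically through increasing $(k,2\ell)$ until such a factor appears, at which point the Galois-group verification is a short computation and the theorem follows.
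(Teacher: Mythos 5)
Your strategy is the same as the paper's: exhibit one isotypical block whose characteristic polynomial is irreducible of degree at least $5$ with non-solvable Galois group, and invoke Galois/Abel--Ruffini. The paper takes the simplest possible choice, $(k,2\ell)=(8,0)$: since $\mathcal{V}_{8,0}=\varnothing$, the whole summand is a single $5\times 5$ $\mathcal{U}$-block, and its characteristic polynomial has Galois group $\mathrm{S}_5$; your candidates $(3,4)$, $(2,6)$, $(1,8)$ would work just as well in principle, and your observation that the $\mathcal{U}/\mathcal{V}$ splitting is $\Delta$-invariant (so one needs a block of size at least $5$) is correct. One of your two proposed verification routes, however, cannot succeed: $\Delta_{k,2\ell}$ is self-adjoint for the $L^2$ inner product on $Y(k,2\ell)$, so every root of the characteristic polynomial is real, and the ``irreducible of prime degree with exactly two non-real roots'' criterion for $\mathrm{S}_n$ is never applicable here. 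You must therefore rely on your second route (Frobenius cycle types mod small primes, or a computer-algebra Galois-group computation), which is what the paper implicitly does.
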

		
		\begin{proof}
			For $k = 8, \ell = 0,$ the Laplacian $\Delta$ acts on $\operatorname{span}{\mathcal{U}_{8,0}}$ as
			\begin{equation*}
				\Delta_{8,0} = \left[ \begin {array}{ccccc} -576&224&0&0&0\\ \noalign{\medskip}-52&-
				352&120&0&0\\ \noalign{\medskip}-8&-56&-208&48&0\\ \noalign{\medskip}0
				&-24&-12&-144&8\\ \noalign{\medskip}0&0&-48&80&-160\end {array}
				\right].
			\end{equation*}
			The characteristic polynomial of this matrix is 
			\begin{equation*}
				{x}^{5}+1440\,{x}^{4}+782464\,{x}^{3}+200810496\,{x}^{2}+24403705856\,x+1126904627200,
			\end{equation*}
			which has Galois group $\mathrm{S}_5,$ so its roots cannot be expressed in radicals.
		\end{proof}
		
		\begin{rmk} 
			This behaviour is appears to be typical. Aside from small values of $k$ and $\ell,$ in many examples we found that the characteristic polynomial of the Laplacian $\Delta_{k,2\ell}$ has Galois group $\mathrm{S}_{m(k,2\ell)}.$
		\end{rmk}

		\section{The $(\SO(3) \times \SO(2))$-Invariant Minimal Hypersurface in $S^5$}
		
		\indent In this section, we implement the steps of $\S$\ref{subsec:Method} to compute the Laplace spectrum of the $(\SO(3) \times \SO(2))$-invariant minimal hypersurface in $S^5$, and thereby calculate its Morse index and nullity.  We continue with the notation of $\S$\ref{subsec:MovingFrames}, taking $n = 5$, $\mathrm{G} = \SO(3) \times \SO(2)$, and $\mathrm{H} = \Z_2$.  Moreover, $\eta$ and $\omega$ will denote the Maurer-Cartan forms on $\SO(6)$ and $\SO(3) \times \SO(2)$, respectively.
		
		\subsection{The minimal hypersurface $M \subset S^5$}
		
		\subsubsection{The $\SO(3) \times \SO(2)$-action on $S^5$}
		
		We will view the ambient space as $\R^6 = \Hom(\R^3, \R^2)$.  Letting $\{e_1, e_2, e_3\}$ be the standard basis of $\R^3$, and $\{E_1, E_2\}$ that of $\R^2$, we have the following basis of $\R^6$:
		\begin{equation} \label{eq:BasisR6}
			\begin{aligned}
				Y_1 & = E_1 \otimes e_1 & \ \ \ \  Y_2 & = E_1 \otimes e_2 & \ \ \ \  Y_3 & = E_1 \otimes e_3 \\
				Y_4 & = E_2 \otimes e_1 & \ \ \ \ Y_5 & = E_2 \otimes e_2 & \ \ \ \ Y_6 & = E_2 \otimes e_3.
			\end{aligned}
		\end{equation}
		We equip $\R^6$ with the flat metric, with respect to which $\{Y_1, \ldots, Y_6\}$ is an orthonormal basis. \\
		\indent The Lie group $\G = \SO(3) \times \SO(2)$ acts on $\R^6$ in the obvious way, with $\SO(3)$ acting on the domain, and $\SO(2)$ acting on the codomain.  Concretely, the $(\SO(3) \times \SO(2))$-action on $\R^6 = \Hom(\R^3, \R^2) \cong (\R^3)^* \otimes \R^2$ yields a map
		\begin{align*}
			\rho \colon \SO(3) \times \SO(2) & \to \SO(6) \\
			\rho(A, e^{it}) & =  \begin{bmatrix}
				\cos (t) A & -\sin (t)  A \\
				\sin (t) A & \cos (t)  A
			\end{bmatrix}\!.
		\end{align*}
		In this way, we obtain an $\SO(3) \times \SO(2)$-action on $S^5$. \\
		\indent We now compute the principal isotropy subgroup $\mathrm{H} \leq \SO(3) \times \SO(2)$.  For this, let $a \in \mathrm{Hom}(\R^3, \R^2)$.  Generically, $\Ker(a) \subset \R^3$ is $1$-dimensional.  Since $\SO(3)$ acts transitively on $\RP^2 = \{\text{1-dim subspaces of }\R^3\}$ with stabilizer $\mathrm{S}(\mathrm{O}(2) \times \mathrm{O}(1))$, we can assume that $\Ker(a) = \mathrm{span}(e_3)$ with residual ambiguity $\mathrm{S}(\mathrm{O}(2) \times \mathrm{O}(1)) \times \SO(2)$. \\
		\indent Now, let $P = \mathrm{span}(e_1, e_2) \subset \R^3$, and consider the restriction $\left.a\right|_P \colon P \to \R^2$.  By acting by the two $\SO(2)$-factors on the domain and codomain, we may diagonalize $a|_P$ so that
		\begin{equation*}
			a(e_1) = \lambda_1 E_1, \:\:\: a(e_2) = \lambda_2 E_2.
		\end{equation*}
		for $\lambda_1, \lambda_2 \neq 0$ with $\lambda_1^2 + \lambda_2^2 = |a|^2$.  Provided $\lambda_1 \neq \lambda_2,$ the residual ambiguity is $\mathbb{Z}_2,$ embedded in $\SO(3) \times \SO(2)$ as
		\begin{equation*}
			\mathrm{H} = \left\lbrace \left( \begin{bmatrix}
				\varepsilon \mathrm{Id}_2 & 0 \\
				0 & 1
			\end{bmatrix} , \varepsilon \mathrm{Id}_2 \right) \colon \varepsilon \in \lbrace 1, -1 \rbrace  \right\rbrace \cong \Z_2.
		\end{equation*}
		Thus, the principal orbits of the $\SO(3) \times \SO(2)$-action on $S^5$ are equivariantly diffeomorphic to $N := (\SO(3) \times \SO(2))/\Z_2$. 
		
		\subsubsection{Invariant metrics on $N$} We pause to consider the homogeneous space $N = (\SO(3) \times \SO(2))/\Z_2$.  Write the Maurer-Cartan forms on $\SO(3)$ and $\SO(2)$ as, respectively
		\begin{align*}
			& \begin{bmatrix}
				0 & \omega_3 & -\omega_2 \\
				-\omega_3 & 0 & \omega_1 \\
				\omega_2 & -\omega_1 & 0
			\end{bmatrix} & &
			\begin{bmatrix}
				0 & \omega_4 \\
				-\omega_4 & 0
			\end{bmatrix}
		\end{align*}
		Letting $\pi \colon \SO(3) \times \SO(2) \to N$ denote the coset projection, we see that every $\SO(3) \times \SO(2)$-invariant metric on $N$ pulls back to
		\begin{equation} \label{eq:InvMetricSO3SO2}
			\pi^*g_{b_1, \ldots, b_6} = \left( b_1\omega_1^2 + b_2 \omega_1 \circ \omega_2 + b_3 \omega_2^2\right) + \left( b_4\omega_3^2 + b_5 \omega_3 \circ \omega_4 + b_6 \omega_4^2 \right)
		\end{equation}
		for some constants $b_1, \ldots, b_6$. In particular, the normal metrics correspond to $b_1 = b_3 = b_4 > 0$ and $b_2 = b_5 = 0$ and $b_6 > 0$.
		
		\subsubsection{The isoparametric hypersurfaces $M_\theta \subset S^5$} 
		
		\indent We now study the principal $\SO(3) \times \SO(2)$-orbits in $S^5$.  To begin, note that the curve $h \colon [0, \frac{\pi}{4}] \to S^5$ given by
		$$h(\theta) = \cos(\theta)\,E_1 \otimes e_1 + \sin(\theta)\,E_2 \otimes e_2$$
		is a geodesic that intersects each $\SO(3) \times \SO(2)$-orbit exactly once, and the principal orbits correspond to $\theta \in (0, \frac{\pi}{4})$.  For $\theta \in (0,\frac{\pi}{4})$, we let $\varphi_\theta \colon N \to S^5$ denote the orbit map $\varphi((A, e^{it})\Z_2) = \rho(A, e^{it})(h(\theta))$, and let
		$$M_\theta := \varphi_\theta(N) \subset S^5$$
		denote the orbit of $h(\theta) \in S^5$. \\
		\indent For computations, we let $\widetilde{h} \colon [0, \frac{\pi}{4}] \to \SO(6)$ denote the following lift of $h$:
		\begin{equation*}
			\widetilde{h}(\theta) = \begin{bmatrix}
				\cos \theta & 0 & 0 & 0 & -\sin \theta & 0 \\
				0 & 1 & 0 & 0 & 0 & 0 \\
				0 & 0 & 1 & 0 & 0 & 0 \\
				0 & 0 & 0 & 1 & 0 & 0 \\
				\sin \theta & 0 & 0 & 0 & \cos \theta & 0 \\
				0 & 0 & 0 & 0 & 0 & 1.
			\end{bmatrix}
		\end{equation*}
		As in $\S$\ref{subsec:MovingFrames}, we let $\psi_\theta \colon \SO(3) \times \SO(2) \to \SO(6)$ denote the map
		$$\psi_\theta(A, e^{it}) = \rho(A, e^{it}) \circ \widetilde{h}(\theta).$$
		Altogether, we have a commutative diagram
		\begin{equation*}
			\begin{tikzcd}
				\SO(3) \times \SO(2) \arrow[r, "\psi_\theta"] \arrow[d, "\pi"'] & \SO(6) \arrow[d, "\mathbf{x}"] \\
				N \arrow[r, "\varphi_\theta"']  & S^5           
			\end{tikzcd}
		\end{equation*}
		Elements $(\mathbf{x}, \mathbf{e}_1, \ldots, \mathbf{e}_5)$ of $\SO(3) \times \SO(2) \leq \SO(6)$ may be viewed as adapted $\Z_2$-frames on $M_\theta$.  Geometrically, at a point $p \in M_\theta$, we have that $\mathbf{x} = p$ is the position vector and $\mathbf{e}_4$ is the normal vector to $M_\theta \subset S^5$ at $p$. \\
		
		\indent Working on $\SO(3) \times \SO(2)$ and suppressing pullbacks via $\psi_\theta \colon \SO(3) \times \SO(2) \to \SO(6)$ from the notation, an application of equation (\ref{eq:EtaAlphaBeta-Relation}) yields:
		\begin{equation*}
			\begin{aligned}
				\alpha_1 & = - \cos(\theta)\, \omega_3 + \sin(\theta)\, \omega_4, \\
				\alpha_2 & = \cos(\theta)\, \omega_2, \\
				\alpha_3 & = \sin(\theta)\, \omega_3 - \cos (\theta)\, \omega_4, \\
				\alpha_4 & = 0, \\
				\alpha_5 & = -\sin(\theta)\, \omega_1.
			\end{aligned}
		\end{equation*}
		Therefore, the induced metric on $M_\theta \subset S^5$ has
\begin{align*}
b_1 & = \sin^2\!\theta & b_2 & = 0 & b_3 & = \cos^2\!\theta & b_4 & = 1 & b_5 & = -4\cos \theta \sin \theta & b_6 & = 1.
\end{align*}  
		Moreover, the symmetric $2$-tensor given by $\SFF = \alpha_1 \circ \beta_{41} + \alpha_2 \circ \beta_{42} + \alpha_3 \circ \beta_{43} + \alpha_5 \circ \beta_{45}$ descends to $N$ as the second fundamental form of $M_\theta \subset S^5$.  Using (\ref{eq:EtaAlphaBeta-Relation}), one computes the mean curvature to be
		$$H(M_\theta) =  \frac{4 \sin(8\theta)}{\cos(8\theta) - 1},$$
		which vanishes when $\theta = \frac{\pi}{8}$.
		
		\subsubsection{The minimal hypersurface $M \subset S^5$} From now on, we fix $\theta = \frac{\pi}{8}$ and let $M := M_{\frac{\pi}{8}}$ denote the minimal isoparametric hypersurface in the $M_\theta$ family.  The induced metric $g_M$ on $M$ pulls back as
		\begin{equation*}
			\pi^*g_M = \frac{1}{4}(2-\sqrt{2})\, 
			\omega_{1}^{2} + \frac{1}{4}(2+\sqrt{2})\,  \omega_{{2}}^{2}  + {\omega_{{3}}}^{2} - \sqrt{2} \,\omega_{3}  \omega_{4}+\omega_{4}^{2}.
		\end{equation*}
		The Cartan-M\"{u}nzner polynomial of the isoparametric foliation is the quartic $F \colon \R^6 \to \R$ given by
		\begin{equation*}
			F(a) = 8 \, \det (a a^t) - \lvert a \rvert^4, 
		\end{equation*}
		where we are thinking of $a \in \R^6$ as an element of $\operatorname{Hom}(\R^3, \R^2)$.  In particular, $M = F^{-1}(0) \cap S^5$.
		
		\subsection{Decomposition of $L^2(M)$} 
		
		\subsubsection{$\SO(3) \times \SO(2)$ representation theory}
		
		The $\SO(3)$-action on $\C^3$ induces complex $\SO(3)$ representations on each $\mathcal{H}_p := \Sym^p_0(\C^3)$ for $p \geq 0$.  It is well-known that each $\mathcal{H}_p$ is irreducible, and that conversely, every irreducible complex $\SO(3)$-representation has this form.  Now, let $\C_q$ for $q \in \Z$ denote the complex irreducible $\SO(2)$-representation given by $e^{i\theta} \cdot z := e^{qi\theta} z$.  Then the complex irreducible $(\SO(3) \times \SO(2))$-representations are all of the form
		\begin{equation*}
			V_{p,q} := \mathcal{H}_p \otimes \C_q,
		\end{equation*}
		for $p,q \in \Z$ with $p \geq 0$.  Via the isomorphism $\C_q \cong \Sym^q(\C_1)$, we may view elements of $\C_q$ as homogeneous $q$th-degree polynomials. \\
		\indent Let $\{v_1, v_2, v_3\}$ be the standard basis of $\C^3$, and let $\{z_i z_j \colon 1 \leq i < j \leq 3\}$ be the induced basis of $\Sym^2(\C^3)$.  A computation shows that the following are highest weight vectors in their respective representations:
		\begin{align*}
			v_1 + iv_2 & \in V_{1,0} & z_1^2 - z_2^2 + 2iz_1z_2 & \in V_{2,0} \\
			v_1 + iv_2 & \in V_{1,1}.  & &
		\end{align*}
		Moreover, $\dim_{\C} (V_{0,2}) = 1$, so every non-zero vector in $V_{0,2}$ is a highest weight vector.
		Accordingly, we define the following projection maps $\pi_{p,q} \colon V_{p,q} \to \C$:
		\begin{equation} \label{eq:ProjMapsSO3SO2}
			\begin{aligned}
				\textstyle \pi_{1,0}(\sum z_j v_j) & = z_1 + i z_2 & \ \ \ \ \  \textstyle \pi_{2,0}( \sum c_{ij} z_i z_j ) & = c_{11} - c_{22} + 2ic_{12} \\
				\textstyle  \pi_{1,1}(\sum z_j v_j) & = z_1 + i z_2 & \ \ \ \ \  \pi_{0,2}(z) & = z
			\end{aligned}
		\end{equation}		
		\subsubsection{The multiplicity formula (Step 1).} By Frobenius Reciprocity (Proposition \ref{prop:FrobRecGen}), there is an $(\SO(3) \times \SO(2))$-invariant decomposition
		$$L^2(M;\C) = \bigoplus_{p \geq 0} \bigoplus_{q \in \Z} V_{p,q}^{\oplus m(p,q)}$$
		where the multiplicity of $V_{p,q}$ is
		$$m(p,q) = \dim_{\C}\{ v \in V_{p, q} \colon h \cdot v = v, \ \forall h \in \Z_2\}.$$
		
		\begin{prop} \label{prop:MultiplicitySO3SO2} We have
			\begin{equation*}
				\begin{aligned}
					m(p,q) &= p+1 \:\:\:\: \text{if} \:\:\:\: p \equiv q \, (2), \\
					m(p,q) &= p \:\:\:\: \text{if} \:\:\:\: p \equiv q +1 \, (2).
				\end{aligned}
			\end{equation*}
		\end{prop}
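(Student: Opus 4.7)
The multiplicity $m(p,q)$ is by definition the dimension of the subspace of $V_{p,q}$ fixed by the nontrivial element $h_0 \in \Z_2$. Since $h_0$ is an involution, the standard averaging formula for invariants under a two-element group gives
\[
m(p,q) \ = \ \frac{1}{2}\bigl( \dim_{\C} V_{p,q} + \chi_{V_{p,q}}(h_0) \bigr) \ = \ \frac{1}{2}\bigl( (2p+1) + \chi_{V_{p,q}}(h_0) \bigr),
\]
so the proof reduces to evaluating a single character.

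Reading off the explicit embedding of the isotropy group stated just before the proposition, the nontrivial element is $h_0 = (R, -\mathrm{Id}_2)$, where $R = \mathrm{diag}(-1,-1,1) \in \SO(3)$ is rotation by angle $\pi$ about the $e_3$-axis. Because $V_{p,q} = \mathcal{H}_p \otimes \C_q$, the character factors:
\[
\chi_{V_{p,q}}(h_0) \ = \ \chi_{\mathcal{H}_p}(R) \cdot \chi_{\C_q}(-\mathrm{Id}_2).
\]
Here $\chi_{\C_q}(-\mathrm{Id}_2) = (-1)^q$, since $-\mathrm{Id}_2 = e^{i\pi}$ acts on $\C_q$ by $e^{iq\pi}$. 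For the first factor, I would evaluate the Weyl character $\chi_{\mathcal{H}_p}(\theta) = \sin((p+\tfrac{1}{2})\theta)/\sin(\theta/2)$ at $\theta = \pi$, which telescopes to $\cos(p\pi) = (-1)^p$. Equivalently, one may note that the monomial basis of $\Sym^p(\C^3)$ consists of $R$-eigenvectors with eigenvalues $(-1)^{p-c}$ (where $c$ is the $z_3$-degree), sum to obtain $\chi_{\Sym^p}(R)$, and then use $\chi_{\mathcal{H}_p}(R) = \chi_{\Sym^p}(R) - \chi_{\Sym^{p-2}}(R)$ (from the $\SO(3)$-decomposition $\Sym^p(\C^3) = \mathcal{H}_p \oplus r^2\Sym^{p-2}(\C^3)$) and a brief induction on $p$ to arrive at the same answer $(-1)^p$.

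Substituting gives $\chi_{V_{p,q}}(h_0) = (-1)^{p+q}$, and therefore
\[
m(p,q) \ = \ \frac{(2p+1) + (-1)^{p+q}}{2} \ = \ \begin{cases} p+1, & p \equiv q \pmod{2}, \\ p, & p \equiv q+1 \pmod{2}, \end{cases}
\]
as claimed. There is no real obstacle here: this is essentially a one-line character evaluation. The only genuinely delicate point is correctly identifying $h_0$ inside $\SO(3) \times \SO(2)$ and keeping the sign conventions on both tensor factors consistent.
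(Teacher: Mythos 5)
Your proof is correct. You and the paper are computing the same quantity --- the dimension of the fixed subspace of the involution $h_0 = (\mathrm{diag}(-1,-1,1),\, -\mathrm{Id}_2)$ acting on $V_{p,q} = \mathcal{H}_p \otimes \C_q$ --- and you both correctly read off the action on the $\C_q$ factor as multiplication by $(-1)^q$. Where you diverge is in handling the $\mathcal{H}_p$ factor: the paper counts monomials $x^a y^b z^c$ explicitly to get $\dim E^{\pm}_p$ on $\Sym^p(\C^3)$, then subtracts using $\Sym^p(\C^3) \cong \mathcal{H}_p \oplus \Sym^{p-2}(\C^3)$ and splits into cases on the parity of $p$ and $q$; you instead invoke the averaging formula $m(p,q) = \tfrac{1}{2}\bigl(\dim V_{p,q} + \chi_{V_{p,q}}(h_0)\bigr)$ and evaluate the $\SO(3)$ character at the rotation angle $\pi$, getting $\chi_{\mathcal{H}_p}(R) = \sum_{m=-p}^{p}(-1)^m = (-1)^p$ and hence $\chi_{V_{p,q}}(h_0) = (-1)^{p+q}$ in one stroke. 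Your route is cleaner: it replaces the monomial count, the eigenspace subtraction, and the four-way parity case analysis with a single character evaluation, and the final case split falls out of the sign $(-1)^{p+q}$ automatically. The paper's approach is more elementary (no character theory beyond eigenspace decompositions) and has the minor virtue of exhibiting explicit eigenvectors, but yours is the more efficient argument. The only point worth being careful about --- and you flag it yourself --- is the identification of $h_0$ inside $\SO(3)\times\SO(2)$, which you have gotten right.
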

		
		\begin{proof}
			Let $(x,y,z)$ be the standard basis for $\C^3$ and let $w$ be a basis for $\C_q.$ Then the non-trivial element $\varepsilon \in \mathbb{Z}_2$ acts as
			\begin{equation*}
				\begin{aligned}
					\varepsilon \cdot (x,y,z) = (-x,-y,z), \:\:\: \varepsilon \cdot w = (-1)^q w.
				\end{aligned}
			\end{equation*}
			The action of $\varepsilon$ on $\mathbb{C}^3$ induces an action on $\mathcal{H}_p.$ When $q$ is even, we have that $m(p,q)$ is equal to the dimension of the $+1$ eigenspace of this action, while when $q$ is odd, $m(p,q)$ is the dimension of the $-1$ eigenspace.
			
			The monomials $x^ay^bz^c$ with $a+b+c = p$ furnish a basis for $\mathrm{Sym}^p(\C^3).$ The element $\varepsilon$ acts on these monomials via $\varepsilon \cdot x^ay^bz^c = (-1)^{a+b} x^ay^b z^c.$ If we let $E^{\pm}_p$ be the $\pm 1$ eigenspace for this action of $\varepsilon$ on $\mathrm{Sym}^p(\C^3),$ we have
			\begin{equation*}
				\begin{aligned}
					\dim E^{+}_p &= \sum_{i=0}^{\lfloor p/2 \rfloor} (2i+1) = \left( \lfloor p/2 \rfloor +1 \right)^2, \\
					\dim E^{-}_p &= \tfrac{1}{2}(p+1)(p+2) - \dim E^{+}_p = \tfrac{1}{2}(p+1)(p+2) - \left( \lfloor p/2 \rfloor +1 \right)^2.
				\end{aligned}
			\end{equation*}
			Since the contraction $\Sym^p(\C^3) \to \Sym^{p-2}(\C^3)$ is surjective with kernel $\mathcal{H}_p$, there is a decomposition $\Sym^p(\C^3) \cong \mathcal{H}_p \oplus \Sym^{p-2}(\C^3)$, and the action of $\varepsilon$ preserves this splitting.  Consequently, the dimensions of the eigenspaces of the $\varepsilon$-action on $\mathcal{H}_p$ are $\dim E^{\pm}_{p} - \dim E^{\pm}_{p-2}$, and
			\begin{equation*}
				\begin{aligned}
					\dim E^{+}_p - \dim E^{+}_{p-2} &= 2 \lfloor p /2 \rfloor +1, \\
					\dim E^{-}_p - \dim E^{-}_{p-2} &= 2p - 2 \lfloor p /2 \rfloor.
				\end{aligned}
			\end{equation*}
			Therefore, if $p$ and $q$ are both even or both odd, $m(p,q) = p+1,$ while if only one of $p$ and $q$ is even, $m(p,q) = p.$
		\end{proof}
		
		\subsection{Explicit equivariant functions (Step 2a)} Before continuing, we set notation.  To begin, we identify $\C^3 \simeq \R^6$ via
		$$(x_{11} + i x_{12}, x_{21} + i x_{22}, x_{31} + i x_{32}) \mapsto (x_{11}, x_{12}, x_{21}, x_{22}, x_{31}, x_{32}).$$
		Letting $\{e_1, e_2, e_3\}$ and $\{E_1, E_2\}$ be the standard bases of $\R^3$ and $\R^2$, respectively, we define $Y_{ia} := e_i \otimes E_a \in \R^6 = \Hom(\R^3, \R^2)$.  Note that in terms of the basis (\ref{eq:BasisR6}), we have that $(Y_{11}, Y_{12}, Y_{21}, Y_{22}, Y_{31}, Y_{32}) = (Y_1, Y_4, Y_2, Y_5, Y_3, Y_6)$. \\
		\indent We use the following index notation. Elements $x_{ia}Y_{ia}$ in $\R^6 = \operatorname{Hom}(\R^3, \R^2)$ are often abbreviated as $(x_{ia})$, where $1 \leq i \leq 3,$ $1 \leq a \leq 2$.  We let $\delta_{ab}$ denote the Kronecker delta and let $\varepsilon_{ijk}$ denote the totally-skew tensor with $\varepsilon_{123} = 1$. \\
		
		\indent We now provide a sufficiently large supply of $(\SO(3) \times \SO(2))$-equivariant functions $M \to V_{p,q}$.  First, we have the position vector $\mathbf{x}$ and normal vector $\mathbf{n}$, i.e.:
		\begin{align*}
			\mathbf{x} \colon M & \to \R^6 \cong V_{1,1} & \mathbf{n} \colon M & \to \R^6 \cong V_{1,1} \\
			\mathbf{x} &  = (x_{ia}) = (x_{11}, x_{12}, x_{21}, x_{22}, x_{31}, x_{32}) & \mathbf{n} & = (n_{ia}) = \frac{1}{4}\left( \frac{\partial F}{\partial x_{ia}}\right)\! = \frac{1}{4}\nabla F,
		\end{align*}
		where $F \colon \R^6 \to \R$ is the (quartic) Cartan-M\"{u}nzner polynomial. Next, we have the quadratic functions
		\begin{align*}
			\mathbf{k} \colon M & \to \C^3 \cong V_{1,0} & \mathbf{b} \colon M & \to \Sym^2(\C^3) \cong V_{2,0} \\
			\mathbf{k} & = (k_i) = \left( \tfrac{1}{2}\varepsilon_{ijk} (x_{j1} x_{k2} -  x_{j2} x_{k1}) \right) & \mathbf{b} & = (b_{ij}) = (x_{ia}x_{ja} - \tfrac{1}{3} x_{ka} x_{ka} \delta_{ij})
		\end{align*}
		and 
		\begin{align*}
			\mathbf{s} \colon M & \to (\Sym^2_\R)_0(\C_1) \cong \Sym^2(V_{0,1}) = V_{0,2} \\
			\mathbf{s} & = (s_{ab}) = (x_{ia} x_{ib} - \tfrac{1}{2} x_{ic} x_{ic} \delta_{ab}),
		\end{align*}
		where we have used the isomorphism from $\C_2$ to $(\mathrm{Sym}^2_\R)_0(\C_1)$ given by
		\begin{equation*}
			x + i y \mapsto \begin{bmatrix}
				x & y \\
				y & - x
			\end{bmatrix}\!.
		\end{equation*}
		Finally, we have the quartic function
		\begin{align*}
			\mathbf{r} \colon M & \to \Sym^2(\C^3) \cong V_{2,0} \\
			\mathbf{r} & = (r_{ij}) := 4\left(\left( x_{i1} n_{j2} + x_{j1} n_{i2} \right) - \left( x_{i2} n_{j1} + x_{j2} n_{i1} \right)\right)\!.
		\end{align*}
				
		\begin{rmk} The components of the position vector $\mathbf{x}$ and normal vector $\mathbf{n}$ are eigenfunctions with eigenvalues $-4$ and $-12$ respectively.   The components of $\mathbf{x} \wedge \mathbf{n}$ are eigenfunctions with eigenvalue $-16$.  In fact, the components of $\mathbf{x} \wedge \mathbf{n}$ take values in
			\begin{equation*}
				\Lambda^2 V_{1,1} / \left( \mathfrak{so}(3) \oplus \mathfrak{so}(2) \right) \cong V_{1,2} \oplus V_{2,0},
			\end{equation*}
			thereby yielding two families of eigenfunctions with eigenvalue $-16$.
		\end{rmk}

		\subsection{Decomposition of isotypical summands (Step 2b)} 
		
		Recalling the maps $\pi_{p,q} \colon V_{p,q} \to \C$ defined in (\ref{eq:ProjMapsSO3SO2}), we define the following six functions highest weight functions $M \to \C$:
		\begin{align*}
			\zeta & := \pi_{1,1} \circ \mathbf{x} & \rho & := i\,\pi_{2,0} \circ \mathbf{r} & \kappa & := \pi_{1,0} \circ \mathbf{k} \\
			\nu & := 4\pi_{1,1} \circ \mathbf{n} & \beta & := \pi_{2,0} \circ \mathbf{b} &  \sigma & : = -i \,\pi_{0,2} \circ \mathbf{s}
		\end{align*}
		The factor of $i = \sqrt{-1}$ in the definitions of $\rho$ and $\sigma$ is included purely to simplify certain formulas in $\S$\ref{subsec:DotProductSO3SO2}.  Explicitly,
		\begin{align*}
			\zeta & = (x_{11} - x_{22}) + i \left( x_{21} + x_{12} \right) & \beta &= (b_{11} - b_{22}) + 2 ib_{12}, \\
			\nu & = 4[(n_{11} - n_{22}) + i \left( n_{21} + n_{12} \right)], &  \rho &=  - 2 r_{12} + i \left(r_{11} - r_{22} \right), \\
			\kappa &= k_1 + i k_2, &  \sigma &= s_{12} - i s_{22}.
		\end{align*}
					
			\indent Now, for fixed $p, q, r \in \Z$ with $p \geq 0$ and  $r \in \{0,1\}$, we consider the following $(\SO(3) \times \SO(2))$-modules:
			\begin{align*}
				W(2p+r,2q) & := \mathrm{span}_{\C}\lbrace g^\sharp(\sigma^q \kappa^{i+r} \beta^j \rho^k) \colon  i + 2j + 2k = 2p, \ k \in \{0,1\}, \ g \in \SO(3) \times \SO(2) \rbrace, \\
				W(2p+1+r, 2q+1) & := \mathrm{span}_{\C}\lbrace g^\sharp(\sigma^q \kappa^r  \zeta^k \nu^{1-k} \rho^i \beta^j)  \colon i+j = p, \ k \in \{0,1\}, \ g \in \SO(3) \times \SO(2) \rbrace.
			\end{align*}
			We prove:
			
			\begin{prop} \label{prop:Isotypical-SO3SO2} For $p,q \in \Z$ with $p \geq 0$, we have
				$$W(p,q) \cong V_{p,q}^{\oplus m(p,q)}.$$
				Consequently,
				$$L^2(M;\C) \cong \bigoplus_{p \geq 0, q \in \Z} W(2p, 2q) \oplus W(2p+1,2q) \oplus W(2p+1, 2q+1) \oplus W(2p+2, 2q+1).$$
			\end{prop}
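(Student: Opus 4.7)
The plan is to model the proof on that of Proposition \ref{prop:Isotypical-SO5}. By Proposition \ref{prop:SpanHighestWeight}(b), each monomial appearing in the spanning set for $W(p,q)$ is a highest weight function of weight $(p,q)$, where the weights of $\zeta, \nu, \rho, \kappa, \beta, \sigma$ are $(1,1), (1,1), (2,0), (1,0), (2,0), (0,2)$, respectively. Thus by Proposition \ref{prop:SpanHighestWeight}(c), if we can show these monomials are linearly independent in $L^2(M;\C)$ and that their count matches $m(p,q)$, then Proposition \ref{prop:SpanHighestWeight}(a) yields $W(p,q) \cong V_{p,q}^{\oplus m(p,q)}$.

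First I would verify the counts. For $W(2p+r, 2q)$, the triples $(i,j,k)$ with $i + 2j + 2k = 2p$ and $k \in \{0,1\}$ split into the case $k=0$ (giving $p+1$ triples) and $k=1$ (giving $p$ triples), for a total of $2p+1$ monomials. For $W(2p+1+r, 2q+1)$, the pairs $(i,j)$ with $i+j = p$ combined with $k \in \{0,1\}$ give $2(p+1) = 2p+2$ monomials. Proposition \ref{prop:MultiplicitySO3SO2} produces exactly these same numbers in all four parity sectors, so the counts agree with $m(p,q)$.

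For linear independence, I would follow the template of Lemma \ref{lem:LinIndep-SO5}: exhibit a one-parameter family of points in $M$ on which the six highest weight functions simplify to products of $\sin t$, $\cos t$, and constants, and reduce a putative linear relation among the monomials to a trigonometric linear relation. A natural candidate is the orbit of $h(\pi/8)$ under a one-parameter subgroup of $\SO(3) \times \SO(2)$ on which several of the generators vanish or become constant, so that the surviving terms separate by their trigonometric degree via a variant of Lemma \ref{lem:Trigonometry}. As in the $\SO(5)$ argument, one likely needs a short induction (on $p$ or $q$) that peels off the relations divisible by $\zeta$, $\sigma$, or $\kappa$, and then handles the ``base'' rows explicitly on a single curve.

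The main obstacle is that the generating set now contains six (rather than five) highest weight functions, and there are two pairs of weight-duplicates to disentangle: $\zeta$ versus $\nu$ at weight $(1,1)$, and $\rho$ versus $\beta$ at weight $(2,0)$. Choosing the test curves so that within each parity sector the resulting trigonometric system is transparently independent will require some care, and the four parity sectors $(2p,2q), (2p+1,2q), (2p+1,2q+1), (2p+2,2q+1)$ may each need a slightly different curve. Once linear independence is established, the claimed decomposition of $L^2(M;\C)$ follows immediately from Frobenius reciprocity (Proposition \ref{prop:FrobRecGen}), since these four families exhaust all $(p,q)$ with $p \geq 0$ and $q \in \Z$.
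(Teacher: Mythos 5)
Your framework is sound: the weight bookkeeping is correct, the count of monomials in each of the four parity sectors does match $m(p,q)$ from Proposition \ref{prop:MultiplicitySO3SO2}, and the deduction of the isomorphism $W(p,q)\cong V_{p,q}^{\oplus m(p,q)}$ from linear independence plus Proposition \ref{prop:SpanHighestWeight}(a),(c) is exactly right. However, the linear independence of the monomials --- which is the entire mathematical content of the proposition beyond this bookkeeping --- is left as an unexecuted plan, and the plan as stated is not obviously adequate. You propose to imitate Lemma \ref{lem:LinIndep-SO5} by restricting to a single one-parameter orbit and invoking a trigonometric independence statement like Lemma \ref{lem:Trigonometry}; but with two weight-duplicate pairs ($\zeta$ vs.\ $\nu$ at $(1,1)$ and $\rho$ vs.\ $\beta$ at $(2,0)$) a single curve will in general not separate all the monomials, and you yourself flag this without resolving it. That unresolved step is the gap.

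For comparison, the paper does not use a curve at all in this case. It first observes that multiplication by $\sigma^q$ (and by $\kappa$) is injective on $L^2(M;\C)$, which collapses the four parity sectors to just two independence statements, with $\sigma$ and the exponent $q$ playing no further role. Each of those two statements is then proved by induction on $p$, evaluating a putative relation at two or three \emph{individual} points of $M$ chosen so that $\beta$ vanishes there (and, for the odd sector, so that $\zeta$ vanishes at one of them): only the $\beta$-free monomials survive, a small nonsingular linear system forces their coefficients to vanish, and the remaining relation is divisible by $\beta$, completing the induction. To repair your argument you would either need to carry out this kind of point-evaluation scheme, or exhibit explicit curves on which the restricted monomials are demonstrably independent in all sectors; as written, neither is done.
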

			
			\noindent The proof of Proposition \ref{prop:Isotypical-SO3SO2} will follow from the following three lemmas.
			
			\begin{lem} \label{lem:LinInd1}
				For fixed $p \in \Z_{\geq 0}$, the set $\lbrace \kappa^i \beta^j \mid i + 2j = 2p \rbrace \cup \lbrace \kappa^i \beta^j \rho \mid i + 2j = 2p -2 \rbrace$ is linearly independent.
			\end{lem}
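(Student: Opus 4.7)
The plan is to adapt the one-parameter family evaluation technique of Lemma~\ref{lem:LinIndep-SO5}: exhibit an explicit curve of points in $M$ along which $\kappa$, $\beta$, $\rho$ take prescribed trigonometric values, and then deduce linear independence from the resulting polynomial identities.  Concretely, let $R_1(t) \in \SO(3)$ denote the rotation in the $(e_1, e_3)$-plane of $\R^3$, and set
\begin{equation*}
q_t := R_1(t) \cdot h(\pi/8) \in M.
\end{equation*}
Since $\mathbf{n} = \tfrac{1}{4}\nabla F$ transforms equivariantly under $\SO(3) \times \SO(2)$, we have $\mathbf{n}(q_t) = R_1(t) \cdot \mathbf{n}(h(\pi/8))$; computing $\mathbf{n}(h(\pi/8))$ directly from the explicit formula for $\nabla F$ and then transporting, a routine calculation yields
\begin{equation*}
\kappa(q_t) = -\tfrac{\sqrt{2}}{4}\sin t, \qquad \beta(q_t) = \cos^2(\pi/8)\cos^2 t - \sin^2(\pi/8), \qquad \rho(q_t) = -8\cos t.
\end{equation*}

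Suppose a linear relation $\sum_{j=0}^{p} a_j\, \kappa^{2p-2j}\beta^{j} + \sum_{j=0}^{p-1} b_j\, \kappa^{2p-2-2j}\beta^{j}\rho = 0$ holds on $M$.  I would evaluate at $q_t$ and then invoke the involution $t \mapsto t + \pi$, under which $\sin^{2m} t$ and $\cos^{2m} t$ are invariant but $\cos t$ is negated; this decomposes the supposed relation into two independent trigonometric identities, one for each $\Z_2$-parity.  After cancelling $\cos t$ from the odd-parity identity and substituting $u = \cos^2 t$ in both, each reduces to a polynomial identity of the form
\begin{equation*}
\sum_{j=0}^{q} c_j\,(1-u)^{q-j}\bigl(\cos^2(\pi/8)\,u - \sin^2(\pi/8)\bigr)^{j} = 0, \qquad q \in \{p,\, p-1\}.
\end{equation*}

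The final step is to observe that whenever $\alpha, \gamma \neq 0$ and $\gamma/\alpha \neq 1$, the polynomials $\{(1-u)^{q-j}(\alpha u - \gamma)^j\}_{j=0}^{q}$ form a basis of the space of polynomials in $u$ of degree $\leq q$: the $j$-th member vanishes to order exactly $j$ at $u = \gamma/\alpha$ and to order exactly $q - j$ at $u = 1$, so examining the smallest $j$ with $c_j \neq 0$ in a supposed relation yields a contradiction.  In our situation, $\gamma/\alpha = \tan^2(\pi/8) = 3 - 2\sqrt{2} \neq 1$, so this Bernstein-type argument applies and forces all $c_j = 0$, whence all $a_j$ and $b_j$ vanish.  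The principal technical effort lies in computing $\rho(q_t)$ via the equivariant transport of $\mathbf{n}(h(\pi/8))$, but this poses no conceptual difficulty beyond bookkeeping.
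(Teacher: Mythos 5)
Your proof is correct, and I checked the three evaluations that carry all the weight: with $q_t = R_1(t)\cdot h(\pi/8)$ one gets $x_{11} = \cos(\pi/8)\cos t$, $x_{31} = \cos(\pi/8)\sin t$, $x_{22} = \sin(\pi/8)$, and then indeed $\kappa(q_t) = -\tfrac{\sqrt2}{4}\sin t$, $\beta(q_t) = \cos^2(\pi/8)\cos^2 t - \sin^2(\pi/8)$, and (using $n_{j1} = (1-\sqrt2)x_{j1}$, $n_{j2} = (1+\sqrt2)x_{j2}$ along this circle) $\rho(q_t) = -8\cos t$. The parity splitting under $t\mapsto t+\pi$, the cancellation of $\cos t$ by continuity, and the vanishing-order argument at $u = \tan^2(\pi/8)$ and $u=1$ are all sound, so all $a_j$ and $b_j$ vanish in one stroke.

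However, your route is genuinely different from the paper's proof of this lemma. The paper argues by induction on $p$: it evaluates the supposed relation at two isolated points $p_1, p_2 \in M$ where $\beta$ vanishes but $\kappa$ and $\rho$ do not, obtains a nonsingular $2\times 2$ system forcing $a_{2p,0} = b_{2p-2,0} = 0$, concludes that the relation is divisible by $\beta$, and descends to $\mathcal{S}_{2p-2}$. Your argument is instead modeled on the paper's proof of the analogous $\SO(5)$ statement (Lemma \ref{lem:LinIndep-SO5}), which also restricts to a circle and invokes trigonometric independence (Lemma \ref{lem:Trigonometry}) --- but with the difference that your circle $q_t$ is generic enough to determine \emph{all} coefficients simultaneously, so no induction or division by $\beta$ is needed. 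What your approach buys is a shorter, non-inductive proof from a single curve; what the paper's buys is that the pointwise evaluations are simpler (only values at two points, no need for the Bernstein-type basis lemma). Both are complete; you could shorten yours slightly by noting that the $q+1$ polynomials $(1-u)^{q-j}(\alpha u - \gamma)^j$ have distinct vanishing orders at $u = \gamma/\alpha$, which immediately gives independence without invoking that they span all of degree $\leq q$.
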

			
			\begin{proof}
				We argue by induction.  For $p = 0$, the set is clearly linearly independent.  Now, consider the following two points in $M \subset \R^6$:
				\begin{equation*}
					\begin{aligned}
						p_1 & = \textstyle \left( 0, \,\tfrac{1}{2}\sqrt {2-\sqrt {2}}, \,0, \,-\tfrac{1}{4}\left( -2+\sqrt {2} \right) 
						\sqrt {2\,\sqrt {2}+4}, \,0, \, \tfrac{1}{2}\,{2}^{3/4} \right), \\
						p_2 & = \textstyle \left( \tfrac{1}{2}\,\sqrt {\sqrt {2}-1},\, 0, \, \tfrac{1}{2}\,\sqrt {2},\,0,\, \tfrac{1}{2}\,\sqrt {\sqrt {2}-1},\,0 \right)\!.
					\end{aligned}
				\end{equation*}
				They satisfy
				\begin{align*}
					\kappa(p_1) & = -\frac{i}{4} \, {2}^{3/4}\sqrt {2-\sqrt {2}}, \:\:\: & \rho(p_1) & = -8\,i \left( \sqrt {2}-1 \right), \:\:\: & \beta(p_1) & = 0, \\
					\kappa(p_2) & = \frac{i}{4}\sqrt {2}\sqrt {\sqrt {2}-1}, \:\:\: & \rho(p_2) & = 4\,i \left( \sqrt {2}-1 \right), \:\:\: & \beta(p_2) & = 0.
				\end{align*}
				Suppose the set $\mathcal{S}_{2p} := \lbrace \kappa^i \beta^j \mid i + 2j = 2p \rbrace \cup \lbrace \kappa^i \beta^j \rho \mid i + 2j = 2p -2 \rbrace$ satisfies a linear relation, say
				\begin{equation} \label{eq:LinRel1}
					\sum_{i+2j = 2p} a_{i,j} \kappa^i \beta^j + \sum_{i+2j = 2p-2} b_{i,j} \kappa^i \beta^j \rho = 0.
				\end{equation}
				Evaluating at $p_1$ and $p_2$ and using $\beta(p_1) = \beta(p_2) = 0$, we find
				\begin{equation*}
					\begin{aligned}
						\kappa(p_1)^2 a_{2p,0} + \rho(p_1) b_{2p-2,0} &= 0, \\
						\kappa(p_2)^2 a_{2p,0} + \rho(p_2) b_{2p-2,0} &= 0,
					\end{aligned}
				\end{equation*}
				from which it follows that $a_{2p,0} = b_{2p-2,0} = 0.$ Therefore, the linear relation (\ref{eq:LinRel1}) has a factor of $\beta$, so that linear independence for $\mathcal{S}_{2p}$ follows from that of $\mathcal{S}_{2p-2}$.  We are done by induction.
			\end{proof}
			
			\begin{lem}
				For fixed $p \in \Z_{\geq 0}$, the set $\lbrace \zeta \rho^i \beta^j \mid i+j = p \rbrace \cup \lbrace \nu \rho^i \beta^j \mid i+j = p \rbrace$ is linearly independent.
			\end{lem}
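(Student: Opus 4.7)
The plan is to argue by induction on $p$, following the template of Lemma~\ref{lem:LinInd1} above. The base case $p = 0$ reduces to the linear independence of $\zeta$ and $\nu$ as functions on $M$, which will itself follow from the same evaluation trick used in the inductive step.

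For the inductive step, suppose we have a linear relation
\begin{equation*}
\sum_{i+j=p} a_{i,j}\, \zeta \rho^i \beta^j + \sum_{i+j=p} b_{i,j}\, \nu \rho^i \beta^j = 0.
\end{equation*}
The strategy is to evaluate at two carefully chosen points of $M$ where $\beta$ vanishes, in order to isolate the ``leading'' coefficients $a_{p,0}$ and $b_{p,0}$. Once these are shown to be zero, every surviving term carries a factor of $\beta$, and since $\beta$ is not identically zero on $M$, cancelling it reduces the relation to the analogous statement for $p - 1$, which is handled by the inductive hypothesis.

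The obvious candidates are the points $p_1, p_2 \in M$ already constructed in the proof of Lemma~\ref{lem:LinInd1}, which satisfy $\beta(p_1) = \beta(p_2) = 0$ and $\rho(p_1), \rho(p_2) \neq 0$. At each $p_k$, the displayed relation collapses to
\begin{equation*}
\rho(p_k)^{p}\,\bigl[a_{p,0}\, \zeta(p_k) + b_{p,0}\, \nu(p_k)\bigr] = 0,
\end{equation*}
yielding a $2 \times 2$ linear system in $(a_{p,0}, b_{p,0})$. Provided the coefficient matrix with rows $(\zeta(p_k), \nu(p_k))$ is non-singular, both unknowns vanish and the induction closes.

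The remaining obstacle, which I expect to be the only substantive one, is verifying this non-singularity, i.e., that $\zeta(p_1)\nu(p_2) - \zeta(p_2)\nu(p_1) \neq 0$. This reduces to evaluating the highest-weight function $\nu = 4\pi_{1,1} \circ \mathbf{n}$ at $p_1$ and $p_2$, which in turn requires the components of the normal vector $\mathbf{n} = \tfrac{1}{4}\nabla F$ at these points, computed directly from $F(a) = 8 \det(a a^t) - |a|^4$. This is a routine, if slightly tedious, calculation. Should the determinant happen to vanish at this specific pair, one can instead choose a different pair of test points from the positive-dimensional subvariety $\{\beta = 0\} \cap M$, so the method is robust.
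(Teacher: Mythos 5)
Your proposal is correct and follows essentially the same route as the paper: induct on $p$, evaluate the relation at points of $M$ where $\beta = 0$ and $\rho \neq 0$ to isolate $a_{p,0}$ and $b_{p,0}$, then divide out $\beta$ and invoke the inductive hypothesis. The one step you defer --- non-singularity of the $2\times 2$ system --- is handled in the paper by replacing $p_2$ with a new point $p_3 \in \{\beta = 0\}$ at which $\zeta(p_3) = 0$ but $\nu(p_3) \neq 0$ (used together with $p_1$, where $\zeta(p_1) \neq 0$), so the system is triangular and the determinant condition is automatic; you would need to carry out this or an equivalent explicit evaluation to close the argument.
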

			
			\begin{proof}
				We again argue by induction.  Consider the point $p_1 \in M \subset \R^6$ defined in the previous proof, and the point $p_3 \in M$ given by
				\begin{equation*}
					\begin{aligned}
						p_3 & = \textstyle \left( 0, \, \tfrac{1}{2} \left( 2-\sqrt {2} \right)^{1/2}, \,\tfrac{1}{2}  {2}^{3/4}, \,\tfrac{1}{2} \left( 2-\sqrt {2} \right)^{1/2},\, 0 , \,0 \right)\!.
					\end{aligned}
				\end{equation*}
				We have
				\begin{align*}
					\zeta(p_3) & = 0,  & \nu(p_3) & = -4i\sqrt {2}\sqrt {2-\sqrt {2}}, & \rho(p_3) & = 8i \!\left( \sqrt {2}-1 \right), & \beta(p_3) & = 0.
				\end{align*}
				Suppose the set $\mathcal{S}_{2p+1} := \lbrace \zeta \rho^i \beta^j \mid i+j = p \rbrace \cup \lbrace \nu \rho^i \beta^j \mid i+j = p \rbrace$ satisfies a linear relation, say
				\begin{equation} \label{eq:LinRel2}
					\sum_{i+j = p} a_{i,j} \zeta \rho^i \beta^j + \sum_{i+j=p} b_{i,j} \nu \rho^i \beta^j = 0.
				\end{equation}
				Evaluating this relation at $p_1$ and $p_3$, using $\beta(p_1) = \beta(p_3) = 0$, $\zeta(p_3) = 0$, and $\rho(p_1) \neq 0$, $\rho(p_3) \neq 0$, we find
				\begin{align*}
					a_{p,0} \zeta(p_1)  +  b_{p,0} \nu(p_1) & = 0, & b_{p,0} \nu(p_3) & = 0.
				\end{align*}
				Noting that $\zeta(p_1) = (\sqrt{2} - 1)^{1/2} \neq 0$, it follows that $a_{p,0} = b_{p,0} = 0.$ Therefore, the linear relation (\ref{eq:LinRel2}) has a factor of $\beta$, so that linear independence for $\mathcal{S}_{2p+1}$ follows from that of $\mathcal{S}_{2p-1}$. Linear independence of $\mathcal{S}_1$ follows from $\zeta(p_3) = 0,$ $\nu(p_3) \neq 0$.  This completes the induction.
			\end{proof}
			
			\begin{lem} \label{lem:BasisHWSO3SO2} Fix $p,q \in \Z$ with $p \geq 0$.  Let $I(p,q)$ be the set of highest weight functions with weight $(p,q)$.
				\begin{enumerate}
					\item The set $\lbrace \sigma^q \kappa^i \beta^j \mid i + 2j = 2p \rbrace \cup \lbrace \sigma^q \kappa^i \beta^j \rho \mid i + 2j = 2p -2 \rbrace$ is a basis of $I(2p,2q)$.
					\item The set $\lbrace \sigma^q \kappa^{i+1} \beta^j \mid i + 2j = 2p \rbrace \cup \lbrace \sigma^q \kappa^{i+1} \beta^j \rho \mid i + 2j = 2p -2 \rbrace$ is a basis of  $I(2p+1,2q)$.
					\item The set $\lbrace  \sigma^q \zeta \rho^i \beta^j  \mid i+j = p \rbrace \cup \lbrace \sigma^q \nu \rho^i \beta^j  \mid i+j = p \rbrace$ is a basis of $I(2p+1, 2q+1)$.
					\item The set $\lbrace \sigma^q \kappa  \zeta\rho^i \beta^j  \mid i+j = p \rbrace \cup \lbrace \sigma^q \kappa  \nu \rho^i \beta^j  \mid i+j = p \rbrace$ is a basis of  $I(2p+2, 2q+1)$.
				\end{enumerate}
			\end{lem}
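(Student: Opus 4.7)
The plan is to handle each of the four cases by the same three-step strategy: check that each displayed monomial is a highest weight function of the stated weight, count that there are exactly $m(p,q)$ of them, and then prove linear independence by reducing to the two preceding lemmas.

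The first step is immediate from Proposition~\ref{prop:SpanHighestWeight}(b), since $\zeta$ and $\nu$ have weight $(1,1)$, $\kappa$ has weight $(1,0)$, $\beta$ and $\rho$ have weight $(2,0)$, and $\sigma$ has weight $(0,2)$: the weight of each product is the sum of its factors' weights, which matches the advertised $(p,q)$ in every case.  For the second step, case (1) contains $(p+1)+p = 2p+1$ elements, matching $m(2p,2q) = 2p+1$ via Proposition~\ref{prop:MultiplicitySO3SO2}; case (2) similarly yields $2p+1 = m(2p+1,2q)$; and cases (3) and (4) each yield $2(p+1) = 2p+2 = m(2p+1,2q+1) = m(2p+2,2q+1)$.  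Since $\dim_{\C} I(p,q) = m(p,q)$ by Proposition~\ref{prop:SpanHighestWeight}(a), it suffices to establish linear independence.

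To establish linear independence in case (1), I would rewrite a hypothetical relation in the form $\sigma^q F = 0$, where $F$ lies in the span of the set appearing in Lemma~\ref{lem:LinInd1}.  Since $\sigma$ is a polynomial function on $M$ that does not vanish identically, the open set $\{\sigma \neq 0\} \subset M$ is nonempty; $F$ vanishes there, and hence throughout $M$ by continuity.  Lemma~\ref{lem:LinInd1} then forces every coefficient to vanish.  Cases (2), (3), and (4) are handled identically, by factoring out $\sigma^q \kappa$, $\sigma^q$, and $\sigma^q \kappa$ respectively, and invoking either Lemma~\ref{lem:LinInd1} or the preceding lemma on the family $\{\zeta \rho^i \beta^j\} \cup \{\nu \rho^i \beta^j\}$.

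The main technical point is thus justifying that we may cancel the common factors, which reduces to verifying that $\sigma$ and $\kappa$ are not identically zero on $M$.  Both are immediate one-point checks---indeed, $\kappa(p_1) \neq 0$ is already implicit in the proof of Lemma~\ref{lem:LinInd1}.  The case $q < 0$ is treated identically after replacing $\sigma^q$ by $\overline{\sigma}^{|q|}$, which is a highest weight function of weight $(0,-2|q|)$.
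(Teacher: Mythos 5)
Your proof is correct and follows essentially the same route as the paper's: a dimension count via Propositions \ref{prop:SpanHighestWeight}(a) and \ref{prop:MultiplicitySO3SO2}, plus linear independence reduced to Lemma \ref{lem:LinInd1} and its companion by cancelling the common factors $\sigma^q$ (and $\kappa$), which the paper phrases as ``multiplication by $\sigma^q$ is an injective map.'' One small imprecision: vanishing of $F$ on the \emph{nonempty} open set $\{\sigma\neq 0\}$ does not propagate to all of $M$ ``by continuity'' alone---you need that $\{\sigma\neq 0\}$ is dense (equivalently, that $F$ is real-analytic on the connected manifold $M$), which holds here because $\sigma$ restricts to a not-identically-zero real-analytic function on $M$.
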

			
			\begin{proof}
				By Proposition \ref{prop:SpanHighestWeight}(a) and Proposition \ref{prop:MultiplicitySO3SO2}, we have
				\begin{align*}
					\dim( I(2p,2q) ) & = m(2p,2q) = 2p+1 \\
					\dim( I(2p+1, 2q )) & = m(2p+1, 2q) = 2p+1.
				\end{align*}
				Now, by Lemma \ref{lem:LinInd1}, the set of $2p+1$ functions $\lbrace \kappa^i \beta^q \mid i + 2j = 2p \rbrace \cup \lbrace \kappa^i \beta^q \rho \mid i + 2j = 2p -2 \rbrace$ is linearly independent.  Since multiplication by $\sigma^q$ is an injective map, claims (1) and (2) follow.  An analogous argument establishes claims (3) and (4).
			\end{proof}

			\subsection{Computing $\Delta_M$ on isotypical summands (Step 3)} \label{subsec:DotProductSO3SO2} We now aim to compute the Laplacian $\Delta_M$ on each isotypical summand $W(p,q)$.  For this, it suffices to compute the Laplacians on the basis functions of $I(p,q)$ given in Lemma \ref{lem:BasisHWSO3SO2}. \\
			
			\indent We start by calculating the pairwise dot products of the gradients $\nabla \zeta$, $\nabla \nu$, etc., along with the Laplacians $\Delta \zeta$, $\Delta \nu$, etc.  The result is as follows:
			
			\begin{prop} \label{prop:DotProdutsSO3SO2} We have
				\begin{equation*}
					\begin{aligned}
						\nabla \kappa \cdot \nabla \beta &= 16 \kappa^3, & \nabla \kappa \cdot \nabla \rho &= -8 \kappa \rho, \\
						\nabla \kappa \cdot \nabla \sigma &= 0, & \nabla \rho \cdot \nabla \sigma &= 64 \beta \sigma + 256 \kappa^2 \sigma, \\
						\nabla \beta \cdot \nabla \sigma &= \tfrac{1}{2} \rho \sigma, & \nabla \beta \cdot \nabla \rho &= -8 \beta \rho, \\
						\nabla \kappa \cdot \nabla \kappa &= -\beta - 8 \kappa^2, & \nabla \sigma \cdot \nabla \sigma &= - 8 \sigma^2, \\
						\nabla \sigma \cdot \nabla \zeta &= -2 \sigma \zeta + \tfrac{1}{2} \sigma \nu, & \nabla \sigma \cdot \nabla \nu &= 8 \sigma \zeta - 6 \sigma \nu, \\
						\nabla \kappa \cdot \nabla \zeta &= -2 \kappa \zeta - \tfrac{1}{2} \kappa \nu, & \nabla \kappa \cdot \nabla \nu &= -8 \kappa \zeta - 6 \kappa \nu, \\
						\nabla \zeta \cdot \nabla \beta &= -4 \zeta \beta - \nu \beta + \tfrac{1}{2} \zeta \rho - \tfrac{1}{16} \nu \rho, & \nabla \nu \cdot \nabla \beta &= -16 \zeta \beta - 12 \nu \beta + 3 \zeta \rho, \\
						\nabla \zeta \cdot \nabla \rho &= 32 \zeta \beta -4 \zeta \rho, & \nabla \nu \cdot \nabla \rho &= -96 \nu \beta + 32 \zeta \rho -12 \nu \rho, \\
						\nabla \beta \cdot \nabla \beta &= -32 \beta \kappa^2 - 64 \kappa^4 - 8 \beta^2. & & 
					\end{aligned}
				\end{equation*}
				We also have the following Laplacian formulas:
				\begin{equation*}
					\begin{aligned}
						\Delta \zeta &= - 4 \zeta, & \Delta \nu &= -12 \nu, \\
						\Delta \beta &= - 8 \beta + 16 \kappa^2, & \Delta \rho &= - 16 \rho, \\
						\Delta \kappa &= -8 \kappa, & \Delta \sigma &= -8 \sigma.
					\end{aligned}
				\end{equation*}
			\end{prop}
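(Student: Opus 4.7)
The plan is to follow the template established in the proof of Proposition \ref{prop:DotProdGradSO(5)} for the $\SO(5)$-case. By Proposition \ref{prop:SpanHighestWeight}(d), each dot product $\nabla \mu_1 \cdot \nabla \mu_2$ of two highest weight functions is again a highest weight function, whose weight is the sum of those of $\mu_1$ and $\mu_2$; and each Laplacian $\Delta \mu$ is a highest weight function of the same weight as $\mu$. Lemma \ref{lem:BasisHWSO3SO2} gives an explicit, rather short, basis for the space of highest weight functions of any prescribed weight $(p,q)$, so each quantity appearing on the left-hand side of the proposition is known \emph{a priori} to be a $\C$-linear combination of the basis monomials listed on the right-hand side. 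For example, $\nabla\kappa\cdot\nabla\beta$ has weight $(3,0)$ and must therefore be a combination of $\{\kappa^3,\,\kappa\beta\}$; $\nabla\zeta\cdot\nabla\beta$ has weight $(3,1)$ and must lie in $\mathrm{span}\{\zeta\beta,\zeta\rho,\nu\beta,\nu\rho\}$; and so on.

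The next step is to compute each gradient and Laplacian concretely using the moving frame apparatus of $\S$\ref{subsec:MovingFrames}. Pulling back to $\SO(3)\times\SO(2)$ via $\psi_{\pi/8}^*$, each of the six highest weight functions $\zeta,\nu,\kappa,\beta,\rho,\sigma$ becomes an explicit polynomial in the entries of the frame $(\mathbf{x},\mathbf{e}_1,\ldots,\mathbf{e}_5)$, since $\mathbf{n}=\mathbf{e}_4=\tfrac{1}{4}\nabla F$. Differentiating via the structure equations $d\mathbf{x}=\alpha_i\mathbf{e}_i$ and $d\mathbf{e}_i=-\alpha_i\mathbf{x}+\beta_{ij}\mathbf{e}_j$ and using the expressions for $\alpha_i$ in terms of the Maurer-Cartan forms $\omega_k$ of $\SO(3)\times\SO(2)$ derived in $\S$3.1, one expands $d\mu$ in the orthonormal coframe that diagonalizes the induced metric $g_M$. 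Reading off the coefficients in that frame yields the components of $\nabla\mu$ as in $\S$\ref{subsubsec:Differentiation}, from which any dot product or divergence (hence Laplacian) can be written out symbolically.

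To fix the unknown coefficients in each basis expansion, I would evaluate the two sides of each proposed identity along a one-parameter family of adapted frames of the form $\psi_{\pi/8}(\exp(tX)\cdot A)$ for a generic $A\in\SO(3)\times\SO(2)$ and a suitably chosen $X\in\mathfrak{so}(3)\oplus\mathfrak{so}(2)$, in direct analogy with the choice made in the $\SO(5)$-proof. Because each left-hand side is constrained to be a combination of at most four specific monomials, evaluating at a handful of values of $t$ (or equating Fourier coefficients in $t$) determines the constants by elementary linear algebra. The same evaluation procedure handles each $\Delta\mu$: for instance, $\Delta\beta$ is known to lie in $\mathrm{span}\{\kappa^2,\beta,\rho\}$, and one point evaluation where $\beta$ and $\kappa^2$ are independent suffices to extract $\Delta\beta=-8\beta+16\kappa^2$ once the $\rho$-coefficient is shown to vanish by symmetry.

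The only real obstacle is bookkeeping: there are $\binom{6}{2}+6=21$ dot products and six Laplacians to verify, the polynomial expression for $\rho$ is quartic in the $x_{ia}$, and the pulled-back one-forms $\alpha_i$ mix the $\omega_k$ with trigonometric factors of $\theta=\pi/8$. Since each individual verification is a mechanical calculation once the test curve and coframe are fixed, this step is best delegated to a computer algebra system, consistent with the conventions announced in the introduction.
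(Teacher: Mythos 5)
Your proposal matches the paper's own proof: the authors likewise invoke Proposition \ref{prop:SpanHighestWeight}(d) together with the basis of Lemma \ref{lem:BasisHWSO3SO2} to constrain each dot product and Laplacian to a short linear combination, compute the left-hand sides via the moving-frame formalism of $\S$\ref{subsubsec:Differentiation}, and fix the coefficients by evaluating on an explicit family of adapted frames (they use the two-parameter family $(t,s)\mapsto\psi_{\pi/8}(\exp(sA),e^{it})$ rather than your one-parameter curve, but this is an inessential variation). The approach and its delegation of the mechanical verification to a computer algebra system are essentially identical to the paper's.
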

			
			\begin{proof}
				These formulas are proven in the same manner as Proposition \ref{prop:DotProdGradSO(5)}. We use the moving frame approach of \S\ref{subsubsec:Differentiation} to express the desired dot product or Laplacian as an expression in an adapted frame $(\mathbf{x}, \mathbf{e}_1, \ldots, \mathbf{e}_5),$ which we restrict to a specific submanifold $S$ of $M$ (together with an adapted frame at each point of $S$) and compare with the basis for each space of highest weight functions with a given weight. The specific submanifold and adapted framing we use is the image of 
				\begin{equation*}
					(t,s) \mapsto \psi_{\pi / 8} ( \exp(sA), e^{it} ),
				\end{equation*}
				where $A \in \SO(3)$ is
				\begin{equation*}
					A = \frac{2}{\sqrt{5}} \left[ \begin {array}{ccc} 0 & 1 & \tfrac{1}{2}\\ \noalign{\medskip} -1 & 0 & 0 \\ \noalign{\medskip}-\tfrac{1}{2} & 0 & 0 \end {array} \right].
				\end{equation*}
				
			\end{proof}
			
			\indent Next, for a fixed value of $p$ and $r$, define functions
			\begin{equation*}
				\begin{aligned}
					f_j &= \sigma^r \beta^j \kappa^{2p-2j}, \:\:\: 0 \leq j \leq p, \\
					g_j &= \rho \sigma^r \beta^{j} \kappa^{2p-2j-2}, \:\:\: 0 \leq j \leq p-1.
				\end{aligned}
			\end{equation*}
			Using the formulas in Proposition \ref{prop:DotProdutsSO3SO2}, together with the Laplacian product rule, it is straightforward to compute:
			
			\begin{prop} \label{prop:LaplacianWEven} We have
				\begin{equation*}
					\begin{aligned}
						\Delta \!\left( f_j \right) & = jr\,g_{j-1} - 64\,j \left( j-1 \right) f_{j-2} - 16 j \left( 6j - 4p - 3 \right) f_{j-1} \\
						& + \left( 4p - 4j - 2 \right)  \left( j-p \right) f_{j+1}+ \left( -40\,j^2 + 64\,pj - 32p^2 - 8\,r^2 \right) f_j, \\
						\\
						\Delta\! \left( g_{{j}} \right) & = \left( -40\,j^2 + 64\,pj - 32\,p^2 - 8r^2 - 48j + 32p - 16 \right) g_j + \left( 4p - 4j - 6
						\right)  \left( j-p+1 \right) g_{j+1} \\
						& -16j \left( 6j - 4p + 1 \right) g_{j-1} - 64j \left( j-1 \right) g_{j-2} + 1024\,rj\,f_{j-1} \\
						& + 128 r \left( j+1 \right) f_{j+1}  + 512r \left( 2\,j+1 \right) f_j,
					\end{aligned}
				\end{equation*}
				and
				\begin{equation*}
					\begin{aligned}
						\Delta\! \left( \kappa f_j \right) & = \left( -40 j^2 + 8\left( 8 p + 4 \right) j - 32 p^2 - 8r^2 - 32p - 8 \right) 
						\kappa f_j
						\\ & + \left( -4j^2 + 800 \left( 4p + 1 \right) j - 8p \left( 2p + 1 \right)  \right) \kappa\,f_{j+1}+ \left( -64j^
						2 + 64 j \right) \kappa f_{j-2} \\
						& + \left( -96j^2 + 200 \left( 8p + 10 \right) j \right) \kappa f_{j-1}+ jr\,\kappa g_{j-1},
						\\
						\\
						\Delta\! \left( \kappa g_j \right) & = \left( 1024jr + 512 r
						\right) \kappa f_j + 128\left( jr + r \right) \kappa f_{j+1} \\
						& + 1024 rj\,\kappa f_{j-1} + \left( -40 j^2 + 64pj - 32p^2 -
						8 r^2 - 16j - 8 \right) \kappa g_j \\
						& + \left( -4j^2 + 8pj - 4p^2 - 6j + 6p - 2 \right) \kappa g_{j+1}+ \left( -64 j^2 + 64\,j \right) \kappa g_{j-2} \\
						& + \left( -96j^2 + 64pj + 16j \right) \kappa g_{j-1}.
					\end{aligned}
				\end{equation*}
			\end{prop}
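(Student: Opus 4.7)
The plan is to prove each of the four identities by direct application of the Laplacian product rule, using the individual Laplacians and pairwise gradient dot products supplied by Proposition \ref{prop:DotProdutsSO3SO2}. No additional geometric input is required; the result is essentially a bookkeeping computation.

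I would begin with $\Delta f_j$, where $f_j = \sigma^r \beta^j \kappa^{2p-2j}$. Applying the product rule for triple products, and then the chain rule $\Delta(h^n) = n h^{n-1}\Delta h + n(n-1) h^{n-2}|\nabla h|^2$ and $\nabla(h^n) = n h^{n-1}\nabla h$ to each factor, I would obtain an expression built from $\Delta\sigma, \Delta\beta, \Delta\kappa$ and the six pairwise dot products $\nabla\sigma\cdot\nabla\sigma$, $\nabla\sigma\cdot\nabla\beta$, $\nabla\sigma\cdot\nabla\kappa$, $\nabla\beta\cdot\nabla\beta$, $\nabla\beta\cdot\nabla\kappa$, $\nabla\kappa\cdot\nabla\kappa$. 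A convenient simplification is that $\nabla\sigma\cdot\nabla\kappa = 0$, which kills the $\sigma$--$\kappa$ cross term immediately; moreover $\Delta(\sigma^r) = -8r^2 \sigma^r$ since $|\nabla\sigma|^2 = -8\sigma^2$. After substitution, every resulting summand is a monomial in $\sigma, \beta, \kappa, \rho$, and can be rewritten using the re-indexings $\beta\cdot f_j = \kappa^2 f_{j+1}$ and $\rho\cdot f_j = \kappa^2 g_j$ (and analogues) to land in the basis $\{f_{j\pm 1}, f_{j\pm 2}, g_{j-1}\}$.

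For $\Delta g_j$ and $\Delta(\kappa g_j)$ I would strip off the $\rho$ factor using
\[
\Delta(\rho F) = \rho\,\Delta F + 2\nabla\rho\cdot\nabla F + F\,\Delta\rho,
\]
so that the bulk of the computation reduces to applying the $\Delta f_j$-style formula to the remaining factor, plus lower-order corrections involving $\nabla\rho\cdot\nabla\sigma$, $\nabla\rho\cdot\nabla\beta$, $\nabla\rho\cdot\nabla\kappa$, together with $\Delta\rho = -16\rho$. The formula for $\Delta(\kappa f_j)$ is handled by the same method with an extra factor of $\kappa$ carried through, and $\Delta(\kappa g_j)$ combines both adjustments.

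The principal obstacle is purely combinatorial bookkeeping. Each product rule expansion yields many monomials, and the binomial-style coefficients coming from $\Delta(h^n)$ and from re-indexing monomials against the correct $f_i$ or $g_i$ are easy to mishandle; stray signs and missing factors of $2$ from the $2\nabla\cdot\nabla$ terms are the most likely source of error. To keep the arithmetic under control, I would tabulate contributions factor-pair by factor-pair and sum column-by-column, and then independently verify each of the four identities for small values of $(p, r, j)$ using a computer algebra system, as the paper's stated conventions explicitly allow.
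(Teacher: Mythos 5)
Your proposal is correct and is exactly the paper's argument: the proposition is obtained by expanding $\Delta(\sigma^r\beta^j\kappa^{2p-2j})$ (and its $\rho$- and $\kappa$-decorated variants) via the Laplacian product rule, substituting the Laplacians and pairwise gradient dot products of Proposition \ref{prop:DotProdutsSO3SO2}, and re-indexing the resulting monomials against $f_{j\pm 1}, f_{j\pm 2}, g_{j-1}$ using identities such as $\beta f_j = \kappa^2 f_{j+1}$ and $\rho f_j = \kappa^2 g_j$. The simplifications you single out ($\nabla\sigma\cdot\nabla\kappa=0$, hence $\Delta(\sigma^r)=-8r^2\sigma^r$) are the right ones, and the coefficients produced by this bookkeeping match those stated.
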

			The formulas in Proposition \ref{prop:LaplacianWEven} yield matrix expressions of the Laplacians on the isotypical components $W(2p,2r)$ and $W(2p+1,2r)$. \\
			
			\indent Finally, for fixed values of $p$ and $r$, define functions 
			\begin{equation*}
				\begin{aligned}
					a_j &= \sigma^r \zeta \rho^{p-j} \beta^j \:\:\: 0 \leq j \leq p, \\
					b_j &=  \sigma^r \nu \rho^{p-j} \beta^j \:\:\: 0 \leq j \leq p.
				\end{aligned}
			\end{equation*}
			Again using Proposition \ref{prop:DotProdutsSO3SO2}, together with the Laplacian product rule, it is straightforward to compute:
			
			\begin{prop} \label{prop:LaplacianWOdd} We have
				\begin{equation*}
					\begin{aligned}
						\Delta \!\left( a_j \right) &= 64 \left( -2p + 2j + 1 \right) 
						\left( j-p \right) a_{j+1} \\
						& + \left( -8j^2 + 16pj - 16jr - 16p^2 + 16pr - 8r^2 - 4j - 8p - 4r - 4 \right) a_j \\
						& + \tfrac{1}{2}\,j
						\left( 2j + 2r + 1 \right) a_{j-1} - \tfrac{1}{16}\,j \left( j-1 \right) a_{j
							-2} - 256\, \left( j-p+1 \right)  \left( j-p \right) b_{j+2} \\
						& + 32r \left( j-p \right) b_{j+1}+ \left( -2j^2 - j + r \right) b_j
						-\tfrac{1}{8}j\,b_{j-1}
					\end{aligned}
				\end{equation*}
				and
				\begin{equation*}
					\begin{aligned}
						\Delta \!\left( b_j \right) & = -4096 \left( j-p+1 \right)  \left(j-p \right) a_{j+2} + 512 \left( j-p \right)  \left( 2j - 2p + r + 2
						\right) a_{j+1} \\
						& + \left( -32j^2 - 128jr + 128pr - 80j + 64p - 16r \right) a_j + 2j \left( 4j+1 \right) a_{j-1} \\
						& -2048 \left( j-p+1 \right)  \left( j-p \right) b_{j+2}+ \left( 64p - 64
						j \right)  \left( 2j - 2p - 4r - 1 \right) b_{j+1} \\
						& + \left( -24j^2 + 16pj + 16jr - 16p^2 - 16pr - 8r^2 + 4j - 24p - 12r - 12 \right) b_j \\
						& -\tfrac{1}{2}j \left( 2j - 2r - 1 \right) b_{j-1} - \tfrac{1}{16}j\left( j-1 \right) b_{j-2},
					\end{aligned}
				\end{equation*}
				and
				\begin{equation*}
					\begin{aligned}
						\Delta \!\left( \kappa a_j \right) & = 64 \left( -2p + 2j+1
						\right)  \left( j-p \right) \kappa a_{j+1} \\
						& + \left( -8j^2 + 16pj - 16jr - 16p^2 + 16pr - 8r^2 + 4j - 24p - 4r - 16 \right) 
						\kappa a_j \\
						& + \tfrac{1}{2}j \left( 2j + 2r + 3 \right) \kappa a_{j-1} - \tfrac{1}{16} j \left( j-1 \right) \kappa a_{j-2} - 256 \left( j-p+1 \right)  \left( j-p \right) \kappa b_{j+2} \\
						& + 32 r \left( j-p \right) \kappa b_{j+1} + \left( -2 j^2 - 3j + r - 1 \right) \kappa b_j - \tfrac{1}{8}j\,\kappa b_{j-1}
					\end{aligned}
				\end{equation*}
				and
				\begin{equation*}
					\begin{aligned}
						\Delta\! \left( \kappa b_j \right) & = -4096 \left( j-p+1 \right) 
						\left( j-p \right) \kappa a_{j+2} + 512 \left( j-p \right) 
						\left( 2j - 2p + r + 2 \right) \kappa a_{j+1} \\
						& + \left( -32j^2 -128jr + 128pr - 112j + 64p - 16r - 16 \right) \kappa a_j + 2j
						\left( 4j + 5 \right) \kappa a_{j-1} \\
						& -2048 \left( j-p+1 \right) 
						\left( j-p \right) \kappa b_{j+2}+ \left( 64p - 64 j \right) 
						\left( 2j - 2p - 4r - 1 \right) \kappa b_{j+1} \\
						& + \left( -24j^2 + 16 pj + 16jr - 16 p^2 - 16pr - 8r^2 - 4j - 40p - 12r - 32
						\right) \kappa b_j \\
						& - \tfrac{1}{2}  \left( 2j - 2r + 1 \right) j\,\kappa b_{
							j-1} - \tfrac{1}{16} j \left( j-1 \right) \kappa b_{j-2}.
					\end{aligned}
				\end{equation*}
			\end{prop}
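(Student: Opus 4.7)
The plan is to prove Proposition \ref{prop:LaplacianWOdd} by direct calculation, applying the Laplacian product rule iteratively together with the dot product and Laplacian formulas collected in Proposition \ref{prop:DotProdutsSO3SO2}. Since $a_j = \sigma^r \zeta \rho^{p-j} \beta^j$ and $b_j = \sigma^r \nu \rho^{p-j} \beta^j$ are monomials in the six highest weight functions $\zeta, \nu, \kappa, \beta, \rho, \sigma$, no genuinely new geometric input is required; the content of the proposition is a bookkeeping statement that the resulting expressions close up in the spans of $\{a_j\}_{0 \le j \le p}$ and $\{b_j\}_{0 \le j \le p}$ (respectively $\{\kappa a_j\}, \{\kappa b_j\}$) with the stated coefficients.

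The first step is to expand $\Delta a_j$ using the multi-factor Laplacian product rule
\begin{equation*}
\Delta(h_1 \cdots h_m) = \sum_k (h_1 \cdots \widehat{h_k} \cdots h_m)\,\Delta h_k + 2 \sum_{k < \ell} (h_1 \cdots \widehat{h_k} \cdots \widehat{h_\ell} \cdots h_m)\,\nabla h_k \cdot \nabla h_\ell,
\end{equation*}
applied to $h_1 = \sigma^r$, $h_2 = \zeta$, $h_3 = \rho^{p-j}$, $h_4 = \beta^j$. For the pure-power factors, one also uses $\Delta(\phi^n) = n \phi^{n-1} \Delta \phi + n(n-1)\phi^{n-2} \nabla \phi \cdot \nabla \phi$ and $\nabla \phi^n \cdot \nabla \psi = n \phi^{n-1} \nabla \phi \cdot \nabla \psi$. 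Substituting the formulas from Proposition \ref{prop:DotProdutsSO3SO2} produces an expression that is a $\C$-linear combination of monomials in $\zeta, \nu, \kappa, \beta, \rho, \sigma$. The next step is to identify each such monomial with one of the basis elements $a_{j'}$ or $b_{j'}$. For instance, terms from $\nabla \zeta \cdot \nabla \beta$ produce both $\zeta \beta$ and $\nu \beta$ and $\zeta \rho$ and $\nu \rho$ factors, which translate directly into $a_{j\pm 1}, b_{j\pm 1}, a_{j+1}$, and $b_{j+1}, b_{j+2}$ after regrouping powers of $\rho$ and $\beta$; the term $-64\kappa^4$ appearing in $\nabla \beta \cdot \nabla \beta$ is absent here because $\kappa$ does not divide $a_j$, but it reappears in the $\Delta(\kappa a_j)$ calculation.

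The second step is to repeat the computation for $\Delta b_j$, which is mechanically the same but uses the formulas involving $\nabla \nu$ in Proposition \ref{prop:DotProdutsSO3SO2}. The asymmetry between the $\zeta$ and $\nu$ gradients (e.g.\ $\nabla \nu \cdot \nabla \beta = -16\zeta\beta - 12\nu\beta + 3\zeta\rho$, with no $\nu\rho$ term) explains why the $b_j$-formula involves a different pattern of shifts than the $a_j$-formula. For the $\kappa a_j$ and $\kappa b_j$ formulas, the cleanest route is to write $\Delta(\kappa f) = \kappa \Delta f + 2 \nabla \kappa \cdot \nabla f + f \Delta \kappa$ with $f = a_j$ or $b_j$, and reuse the previous two results, so that only the extra dot products $\nabla \kappa \cdot \nabla \zeta$, $\nabla \kappa \cdot \nabla \nu$, $\nabla \kappa \cdot \nabla \rho$, $\nabla \kappa \cdot \nabla \beta$, $\nabla \kappa \cdot \nabla \sigma$ and $\Delta \kappa = -8\kappa$ need to be fed in; the $\kappa^4$ and $\beta \kappa^2$ terms from $\nabla \beta \cdot \nabla \beta$ now close up properly because $\kappa$ is a factor.

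The main obstacle is simply the volume of algebra: there are on the order of a dozen cross terms per monomial factor, and the coefficients mix the indices $j$, $p$, and $r$ in ways that are easy to mistabulate. I would organize the bookkeeping by first writing out, for each pair of factors $h_k, h_\ell$, a row listing which basis vectors $a_{j'}, b_{j'}$ (or $\kappa a_{j'}, \kappa b_{j'}$) receive contributions and with what coefficients, and then summing the columns. Given that the paper explicitly notes that various computations were aided by \textsc{Maple}, this is a natural place to cross-check the hand calculation against a computer algebra implementation; no step in the derivation is subtle, but verifying the quadratic-in-$j$ coefficients of the diagonal term is the most error-prone.
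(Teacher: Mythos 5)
Your overall strategy is the same as the paper's: the paper offers no more justification than ``using Proposition~\ref{prop:DotProdutsSO3SO2}, together with the Laplacian product rule, it is straightforward to compute,'' so an iterated product-rule expansion fed by the tabulated gradients and Laplacians is exactly the intended route.

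However, there is a genuine gap in your claim that the expansion ``closes up'' formally, i.e.\ that every monomial produced can be ``identified with one of the basis elements $a_{j'}$ or $b_{j'}$,'' and that ``no genuinely new geometric input is required.'' This is false as stated. The product rule applied to $a_j = \sigma^r \zeta \rho^{p-j}\beta^j$ produces monomials divisible by $\kappa^2$ that are \emph{not} of the form $a_{j'}$ or $b_{j'}$: the term $j\beta^{j-1}\Delta\beta$ contributes $16j\,\kappa^2\zeta\beta^{j-1}\rho^{p-j}\sigma^r$ via $\Delta\beta = -8\beta + 16\kappa^2$; the cross term $\nabla(\sigma^r)\cdot\nabla(\rho^{p-j})$ contributes $\kappa^2\sigma$-terms via $\nabla\rho\cdot\nabla\sigma = 64\beta\sigma + 256\kappa^2\sigma$; and for $j\geq 2$ the term $j(j-1)\beta^{j-2}\nabla\beta\cdot\nabla\beta$ contributes $\kappa^2\beta$- and $\kappa^4$-terms. (Your assertion that the $-64\kappa^4$ term ``is absent because $\kappa$ does not divide $a_j$'' is a misreading: it arises from $\Delta(\beta^j)$ regardless of whether $\kappa$ divides the monomial.) These $\kappa$-divisible monomials do not cancel; already for $(p,j,r)=(1,1,0)$ a direct expansion gives $\Delta(\zeta\beta) = -20\zeta\beta + 16\kappa^2\zeta - 2\nu\beta + \zeta\rho - \tfrac18\nu\rho$, which matches the stated $-24\zeta\beta + \tfrac32\zeta\rho - 3\nu\beta - \tfrac18\nu\rho$ only after substituting a relation of the form $16\kappa^2\zeta = -4\zeta\beta + \tfrac12\zeta\rho - \nu\beta$. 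Such relations exist because, e.g., the six weight-$(3,1)$ monomials $\kappa^2\zeta, \kappa^2\nu, \zeta\beta, \zeta\rho, \nu\beta, \nu\rho$ live in the $4$-dimensional space $I(3,1)$ (Lemma~\ref{lem:BasisHWSO3SO2} and Proposition~\ref{prop:MultiplicitySO3SO2}), but their coefficients are additional geometric data about $M$ that must be determined by the same evaluation-on-a-submanifold technique used in Proposition~\ref{prop:DotProdutsSO3SO2}; they are not formal consequences of the product rule. A second, smaller omission: for $p-j\geq 2$ the expansion of $\Delta(\rho^{p-j})$ requires $\nabla\rho\cdot\nabla\rho$, which does not appear in the list of Proposition~\ref{prop:DotProdutsSO3SO2} and must also be computed and reduced. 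Your proof plan should therefore include an explicit preliminary step establishing the needed quadratic relations among $\kappa^2$ (and $\kappa^4$) times the other highest-weight functions, without which the ``identify each monomial with a basis element'' step fails.
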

			The formulas in Proposition \ref{prop:LaplacianWOdd} yield matrix expressions of the Laplacians on the isotypical components $W(2p,2r+1)$ and $W(2p+1,2r+1)$.
			
			\subsection{The index and nullity (Step 4)} 
			\indent For $p,q \in \Z$ with $p \geq 0$, let $Y(p,q) \subset L^2(M;\R)$ denote the real subspace spanned by the real and imaginary parts of the highest weight functions in $I(p,q)$ (recall Lemma \ref{lem:BasisHWSO3SO2}), and let $\Delta_{p,q} := \Delta|_{Y(p,q)}$ denote the Laplacian of $g_M$ restricted to $Y(p,q)$. \\
			
			\indent We are almost ready to compute the Morse index and nullity of $M \subset S^5$.  For this, we need an exhaustive list of the eigenvalues (and their multiplicities) of $-\Delta$ that are less than $16$.  Now, Proposition \ref{prop:LaplacianWEven} (respectively, Proposition \ref{prop:LaplacianWOdd}) yields the matrix of $\Delta_{p, 2q}$ (resp., $\Delta_{p,2q+1}$) with respect to the basis consisting of the real and imaginary parts of highest weight functions in $I(p,2q)$ (resp., $I(p,2q+1)$). Thus, it remains to understand which $Y(p,q)$ house eigenvalues less than $16$. \\
			\indent For this, we will apply Theorem \ref{thm:MainBound} to the homogeneous space $N = (\SO(3) \times \SO(2))/\Z_2$.  In prepraration, we let $g_1 = \varphi^*_{\pi/8}g_M$ denote the pullback of the minimal isoparametric metric to $N$, and note that $\Delta_{p,q}$ is a self-adjoint endomorphism with respect to the $L^2$ inner product on $Y(p,q)$ given by $(f_1, f_2) = \int_M f_1 f_2\,\vol_{g_N}$.  Next, we let $g_2 = g_\mathrm{nor}$ denote the normal homogeneous metric on $N$ given by (\ref{eq:InvMetricSO3SO2}) with $b_1 = b_3 = b_4 = b_6 = 1$ and $b_2 = b_5 = 0$, and let $L_{p,q}$ denote the Laplacian of $g_{\mathrm{nor}}$ restricted to $Y(p,q)$.  By standard theory \cite{yamaguchi79},
			$$L_{p,q} = -\mu(p,q)\,\mathrm{Id},$$
			where $\mu(p,q)$ is the eigenvalue of the Casimir of $\SO(3) \times \SO(2)$ on $Y(p,q)$.  Moreover,
			$$\mu(p,q) = p(p+1) + q^2.$$
			We now have:
			
			\begin{prop} \label{prop:Bounds-SO3SO2} Let $\lambda$ be an eigenvalue of $-\Delta_{p,q}$.  Then:
				\begin{align*}
					\left( 2 - \sqrt{2} \right) \left( p(p+1) + q^2 \right) & \leq \lambda \leq \left( 4 + 2 \sqrt{2} \right) \left( p(p+1) + q^2 \right)\!.
				\end{align*}
			\end{prop}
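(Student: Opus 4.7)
The plan is to mimic the proof of Proposition \ref{prop:BoundsSO5}, applying Theorem \ref{thm:MainBound} with $g_1 = \varphi_{\pi/8}^*g_M$ and $g_2 = g_{\mathrm{nor}}$, where $g_{\mathrm{nor}}$ is the normal metric with $b_1 = b_3 = b_4 = b_6 = 1$ and $b_2 = b_5 = 0$ in the notation of \eqref{eq:InvMetricSO3SO2}. Since $Y(p,q)$ is an isotypical component corresponding to $V_{p,q}$, the remark following Theorem \ref{thm:MainBound} gives $\mu_{\min} = \mu_{\max} = \mu(p,q) = p(p+1) + q^2$, so the task reduces to computing the extremal eigenvalues of $g_1^{-1}$ relative to $g_{\mathrm{nor}}^{-1}$, equivalently the reciprocals of the extremal eigenvalues of $g_1$ relative to $g_{\mathrm{nor}}$.

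First I would record that
\begin{align*}
\pi^*g_{\mathrm{nor}} & = \omega_1^2 + \omega_2^2 + \omega_3^2 + \omega_4^2, \\
\pi^*g_M & = \tfrac{1}{4}(2-\sqrt{2})\,\omega_1^2 + \tfrac{1}{4}(2+\sqrt{2})\,\omega_2^2 + \omega_3^2 - \sqrt{2}\,\omega_3\circ\omega_4 + \omega_4^2,
\end{align*}
using the formula for $\pi^*g_M$ already established in the minimal hypersurface subsection. The metric $\pi^*g_M$ is block-diagonal in the $(\omega_1, \omega_2) \oplus (\omega_3, \omega_4)$ splitting. The $(\omega_1,\omega_2)$-block is already diagonal with eigenvalues $\tfrac{1}{4}(2-\sqrt{2})$ and $\tfrac{1}{4}(2+\sqrt{2})$. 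For the $(\omega_3,\omega_4)$-block I would compute the eigenvalues of the symmetric matrix $\begin{pmatrix} 1 & -\tfrac{\sqrt{2}}{2} \\ -\tfrac{\sqrt{2}}{2} & 1 \end{pmatrix}$, which are $\tfrac{1}{2}(2\pm\sqrt{2})$.

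Hence the four eigenvalues of $g_1$ relative to $g_{\mathrm{nor}}$ are $\tfrac{1}{4}(2\pm\sqrt{2})$ and $\tfrac{1}{2}(2\pm\sqrt{2})$. The smallest is $\tfrac{1}{4}(2-\sqrt{2})$ and the largest is $\tfrac{1}{2}(2+\sqrt{2})$. Taking reciprocals and rationalizing, I obtain
\begin{equation*}
r_{\min} = \frac{2}{2+\sqrt{2}} = 2-\sqrt{2}, \qquad r_{\max} = \frac{4}{2-\sqrt{2}} = 4+2\sqrt{2}.
\end{equation*}
Theorem \ref{thm:MainBound} then yields the asserted bounds $r_{\min}\mu(p,q) \leq \lambda \leq r_{\max}\mu(p,q)$.

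There is no significant obstacle here; the proof is essentially a two-by-two eigenvalue computation followed by invocation of the general eigenvalue-bound theorem. The only mild subtlety is remembering that Theorem \ref{thm:MainBound} is stated in terms of the eigenvalues of the dual (inverse) metrics, which is why we invert to pass from the eigenvalues of $g_1$ w.r.t. $g_{\mathrm{nor}}$ to those of $g_1^{-1}$ w.r.t. $g_{\mathrm{nor}}^{-1}$.
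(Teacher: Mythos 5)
Your proposal is correct and follows the same route as the paper: pull back $g_M$ and $g_{\mathrm{nor}}$ to $\SO(3)\times\SO(2)$, extract the extremal relative eigenvalues $\tfrac{1}{4}(2-\sqrt{2})$ and $\tfrac{1}{2}(2+\sqrt{2})$, invert to get $r_{\min}=2-\sqrt{2}$ and $r_{\max}=4+2\sqrt{2}$, and apply Theorem \ref{thm:MainBound} with $\mu_{\min}=\mu_{\max}=p(p+1)+q^2$. The only difference is that you spell out the $2\times2$ eigenvalue computation for the $(\omega_3,\omega_4)$-block, which the paper leaves implicit.
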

			
			\begin{proof} As before, we let $\pi \colon \SO(3) \times \SO(2) \to N$ denote the coset projection.  Recall that $g_{\mathrm{nor}}$ and $g_M$ pull back to $\SO(3) \times \SO(2)$ as
				\begin{align*}
					\pi^*g_{\mathrm{nor}} & =  \omega_1^2 + \omega_2^2 +  \omega_3^2 + \omega_4^2  \\
					\pi^*g_M & = \textstyle \frac{1}{4}(2-\sqrt{2})\, 
					\omega_{1}^{2} + \frac{1}{4}(2+\sqrt{2})\,  \omega_{{2}}^{2}  + {\omega_{{3}}}^{2} - \sqrt{2} \,\omega_{3}  \omega_{4}+\omega_{4}^{2}.
				\end{align*}
				In particular, the smallest and largest eigenvalues of $g_M$ with respect to $g_{\mathrm{nor}}$ are $\frac{1}{4}(2-\sqrt{2})$ and $\frac{1}{2}(2 + \sqrt{2})$.  Taking reciprocals, the smallest and largest eigenvalues of the dual metric $g_M^{-1}$ with respect to $g_{\mathrm{nor}}^{-1}$ are $r_{\min} = 2-\sqrt{2}$ and $r_{\max} = 4 + 2 \sqrt{2}$.  Applying Theorem \ref{thm:MainBound} yields the result.
			\end{proof}
			
			\indent We can now understand which eigenspaces of $-\Delta$ have eigenvalue at most $16$.  Indeed, by Proposition \ref{prop:Bounds-SO3SO2}, we need only consider the operators $-\Delta_{p,q}$ for $(p,q)$ satisfying
			$$\left( 2 - \sqrt{2} \right) \left( p(p+1) + q^2\right) \leq 16.$$
			That is, we need only consider the following values of $(p,q)$:
			\begin{equation*}
				\begin{aligned}
					& (0,0),\,(0,\pm 2),\,(0,\pm 4), \\
					& (1,0),\,(1,\pm 1),\,(1,\pm 2),\,(1,\pm 3),\,(1,\pm 4),\,(1,\pm 5), \\
					& (2,0),\,(2,\pm 1),\,(2,\pm 2),\,(2,\pm 3),\,(2,\pm 4), \\
					& (3,0),\,(3,\pm 1),\,(3,\pm 2),\,(3,\pm 3),\,(3,\pm 4), \\
					& (4,0),\,(4,\pm 1),\,(4,\pm 2).
				\end{aligned}
			\end{equation*}
			Using the matrix representations of $-\Delta_{p,q}$ provided by Propositions \ref{prop:LaplacianWEven} and \ref{prop:LaplacianWOdd}, we find that the following are all the eigenvalues of $-\Delta$ less than or equal to $16$:
			
			$$\begin{tabular}{| l | l | c |} \hline
				Eigenvalue of $-\Delta$ & $(p,q)$ & Multiplicity \\ \hline \hline
				$\lambda_1 = 0$ & $(0,0)$ & 1 \\ \hline
				$\lambda_2 = 4$ & $(1,1) \oplus (1,-1)$ & 6 \\ \hline
				$\lambda_3 = 8$ & $(0,2) \oplus (0,-2) \oplus (1,0)$ & 5 \\ \hline
				$\lambda_4 = 20 - 4 \sqrt{7} \approx 9.42$ & $(2,0)$ & 5 \\ \hline
				$\lambda_5 = 12$ & $(1,1) \oplus (1,-1)$ & 6 \\ \hline
				$\lambda_6 = 24 - 4\sqrt{5} \approx 15.06$ & $(1,3) \oplus (1,-3) \oplus (2,1) \oplus (2,-1)$ & 16 \\ \hline \hline
				$\lambda_7 = 16$ & $(1,2) \oplus (1,-2) \oplus (2,0)$ & 11 \\ \hline
			\end{tabular}$$ \\
			Note that the multiplicity of an eigenvalue corresponding to $V_{p,q}$ is given by $\dim(V_{p,q}) = 2p+1$.  The above table now yields:
			
			\begin{thm} The index of the $(\SO(3) \times \SO(2))$-invariant minimal isoparametric hypersurface in $S^5$ is 39. Furthermore, the nullity is 11 and every Jacobi field is rotational.
			\end{thm}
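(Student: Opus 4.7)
The plan is to imitate the proof of the analogous theorem for the $\SO(5)$-invariant hypersurface in $S^9$, but now using the tabulated data for $S^5$ that has already been assembled just above the statement. Since the Jacobi operator is $\mathcal{J}_M = -(\Delta_M + g(n-1))$ with $g = 4$ and $n = 5$, the index equals the total multiplicity of $-\Delta$ eigenvalues strictly less than $16$, and the nullity equals the multiplicity of the eigenvalue $16$.

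First I would read off the contributions to the index directly from the table, summing the dimensions listed in the last column for rows with $\lambda < 16$:
\begin{equation*}
\mathrm{Ind}(M) = 1 + 6 + 5 + 5 + 6 + 16 = 39.
\end{equation*}
The legitimacy of this count rests on the fact that Proposition \ref{prop:Bounds-SO3SO2} gives the lower bound $(2-\sqrt{2})(p(p+1)+q^2)$ for eigenvalues of $-\Delta_{p,q}$, which exceeds $16$ outside the finite list of $(p,q)$ enumerated above, so every eigenspace with eigenvalue $<16$ must appear in one of those $Y(p,q)$. Propositions \ref{prop:LaplacianWEven} and \ref{prop:LaplacianWOdd} then reduce the determination of these eigenvalues to a finite linear algebra computation whose outcome is tabulated.

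Next I would read off the nullity as the multiplicity of the $\lambda_7 = 16$ row, namely $11$. For the claim that every Jacobi field is rotational, I would then compare with the dimension of the space of rotational Jacobi fields, which is $\dim(\SO(6)) - \dim(\SO(3)\times\SO(2)) = 15 - 4 = 11$. The rotational Jacobi fields always inject into the full nullity space (they consist of the normal components of Killing fields on $S^5$, which vanish only for Killing fields tangent to $M$, i.e., those in $\mathfrak{so}(3)\oplus\mathfrak{so}(2)$), so the equality of dimensions forces every Jacobi field to be rotational.

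The only real obstacle in this argument is verifying that the numerical data in the table is correct, i.e., that all eigenvalues of $-\Delta_{p,q}$ strictly below $16$ are accounted for and that none of the excluded pairs $(p,q)$ sneak an eigenvalue below $16$ through the bound; but this is already guaranteed by Proposition \ref{prop:Bounds-SO3SO2}, and the individual matrix diagonalizations are routine given Propositions \ref{prop:LaplacianWEven} and \ref{prop:LaplacianWOdd}. As in the $\SO(5)$ case, one can alternatively identify the $16$-eigenspace with the span of the components of $\mathbf{x} \wedge \mathbf{n}$, confirming from a different angle that the nullity is spanned entirely by rotational fields.
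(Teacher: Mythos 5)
Your proposal is correct and takes essentially the same route as the paper: the index is read off by summing the multiplicities of the tabulated eigenvalues below $16$, and the nullity $11$ is matched against $\dim(\SO(6)) - \dim(\SO(3)\times\SO(2)) = 11$ to conclude that all Jacobi fields are rotational. Your added remarks on the injectivity of the rotational fields into the nullity space and the identification of the $16$-eigenspace with the components of $\mathbf{x}\wedge\mathbf{n}$ are consistent with (and slightly more explicit than) what the paper writes.
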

			
			\begin{proof} Summing the dimensions of the eigenspaces of $-\Delta$ having eigenvalue less than $16$, we obtain
				\begin{align*}
					\text{Ind}(M) &  = 1 + 6 + 5 + 5 + 6 + 16 = 39.
				\end{align*}
				Moreover, since $\dim(\SO(6)) - \dim(\SO(3) \times \SO(2)) = 11$, the space of rotational Jacobi fields is $11$-dimensional.  Since the nullity of $M$ is also equal to $11$, we deduce that every Jacobi field is rotational.
			\end{proof}
			
			\begin{thm} The Laplace spectrum of the minimal isoparametric metric on $(\SO(3) \times \SO(2))/\Z_2$ contains eigenvalues that are not expressible in radicals.
			\end{thm}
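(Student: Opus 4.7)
The plan is to mirror exactly the strategy used in the analogous $\SO(5)/\mathrm{T}^2$ statement: exhibit a single isotypical summand on which the explicit matrix representation of the Laplacian, furnished by our formulas in Step 3, has a characteristic polynomial whose Galois group over $\Q$ is the full symmetric group $S_m$ for some $m \geq 5$. By the Abel--Ruffini theorem, the roots of such a polynomial are not expressible in radicals, and they are (a subset of) the Laplace eigenvalues by Proposition \ref{prop:Isotypical-SO3SO2}.

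Concretely, I would first select an isotypical summand whose multiplicity $m(p,q)$ is at least $5$, since $S_n$ is non-solvable precisely for $n \geq 5$. The most economical candidate is $(p,q) = (4,0)$, for which Proposition \ref{prop:MultiplicitySO3SO2} gives $m(4,0) = 5$ and Lemma \ref{lem:BasisHWSO3SO2}(1) (with $p=2$, $q=0$) yields the explicit basis $\{\kappa^4,\,\kappa^2\beta,\,\beta^2,\,\kappa^2\rho,\,\beta\rho\}$ of the space of highest weight functions. Applying Proposition \ref{prop:LaplacianWEven} with $p=2$, $r=0$ then expresses $\Delta_{4,0}$ as an explicit $5 \times 5$ matrix with integer entries, from which the characteristic polynomial $\chi(x) \in \Z[x]$ can be read off by a routine determinant computation.

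The remaining step is to verify, using a computer algebra system such as \textsc{Maple}, that $\mathrm{Gal}(\chi/\Q) \cong S_5$. In practice this is done by checking that $\chi$ is irreducible over $\Q$ (e.g.\ via factorization), and then invoking the built-in Galois group routine, or alternatively by exhibiting primes modulo which $\chi$ has cycle types generating $S_5$ (a transposition plus a $5$-cycle suffices for an irreducible quintic). Once the Galois group is identified as $S_5$, the Abel--Ruffini theorem yields the conclusion.

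The main obstacle is simply that this last verification is not a priori guaranteed: it is conceivable that for the specific summand $(4,0)$ the polynomial $\chi$ factors or has a smaller solvable Galois group, in which case one would need to try a larger summand. However, as already noted in the remark following the $\SO(5)$ case, the generic behavior across summands appears to be $\mathrm{Gal} \cong S_{m(p,q)}$, so if $(4,0)$ fails to work directly, a slightly larger case such as $(5,1)$ (with $m = 6$) or $(6,0)$ (with $m = 7$) should succeed. No new theoretical input beyond Propositions \ref{prop:LaplacianWEven}--\ref{prop:LaplacianWOdd} is required; the proof is a targeted numerical application of the algorithm already established.
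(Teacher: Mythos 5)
Your overall strategy is exactly the paper's: exhibit one isotypical summand whose Laplacian matrix (read off from Propositions \ref{prop:LaplacianWEven}--\ref{prop:LaplacianWOdd}) has characteristic polynomial with Galois group $S_m$, $m\geq 5$, and invoke Abel--Ruffini. The paper does this with the summand $(p,q)=(4,4)$, where $\Delta_{4,4}$ is a genuinely mixing $5\times 5$ matrix whose characteristic polynomial has Galois group $S_5$.

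However, your primary candidate $(4,0)$ is structurally doomed, and so is your fallback $(6,0)$ --- this is not a matter of bad luck to be checked numerically. In Proposition \ref{prop:LaplacianWEven}, every term coupling the $f$-block to the $g$-block carries a factor of $r$ (the exponent of $\sigma$): the only $g$-term in $\Delta(f_j)$ is $jr\,g_{j-1}$, and the only $f$-terms in $\Delta(g_j)$ are $1024rj\,f_{j-1}+128r(j+1)f_{j+1}+512r(2j+1)f_j$. For weight $(2p,0)$ one has $r=0$, so $\Delta_{2p,0}$ is block-diagonal with blocks of sizes $p+1$ and $p$. For $(4,0)$ this gives a cubic times a quadratic, and for $(6,0)$ a quartic times a cubic; in both cases every eigenvalue is expressible in radicals, so the Galois-group check must fail. (The same factor of $r$ appears in the $\kappa f_j$, $\kappa g_j$ formulas, so all summands $(p,0)$ are ruled out.) You therefore need a summand with $q\neq 0$ of multiplicity at least $5$; your other fallback $(5,1)$ is of the type governed by Proposition \ref{prop:LaplacianWOdd}, where the $a$--$b$ coupling survives at $r=0$, so it is a plausible candidate, and the paper's choice $(4,4)$ is verified to work. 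With that correction --- choose $(4,4)$ (or another mixing summand) and carry out the irreducibility and Galois-group verification you describe --- the argument goes through as in the paper.
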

			
			\begin{proof} For $(p,q) = (4,4)$, the Laplacian  acts as
				\begin{equation*}
					\Delta_{4,4} =
					\left[ \begin {array}{ccccc}
					-160&-12&0&0&0\\
					80&-72&-2&2&0\\
					-128&-32&-64&0&4\\ 
					1024&256&0&-112&-2\\ 
					2048&3072&512&16&-72
					\end {array} \right].
				\end{equation*}
				The characteristic polynomial of this matrix is 
				\begin{equation*}
					x^5 + 480\,x^4 + 87264\,x^3 + 7409920\,x^2 + 287600640\,x+
					3933241344
				\end{equation*}
				which has Galois group $\mathrm{S}_5,$ so its roots cannot be expressed in radicals.
			\end{proof}

			\section*{Appendix: Representation theory}

			Let $\G$ be a compact, connected Lie group with Lie algebra $\mathfrak{g}$ and let $(V, \langle \cdot, \cdot \rangle)$ be a 
			hermitian vector space with a unitary $\G$-representation, i.e. a group homomorphism  $\varphi: \G \rightarrow \mathrm{Aut}(V)$. 
			Any representation $V$ of a compact Lie group $\G$ is the direct sum of irreducible summands. Combining isomorphic summands  we obtain
			the decomposition of $V$ into {\it isotypical components}, i.e. sums of  irreducible summands all isomorphic to a fixed irreducible $\G$-representation.
			The {\it multiplicity} of an irreducible $\G$-representation $W$ in $V$ is then the number of summands in the isotypical component defined by $W$.
			It is given by the dimension of $\mathrm{Hom}_{\G}(V, W)$, the space of $\G$-equivariant homomorphisms from $V$ to $W$.
			
			We recall that any representation $(V, \varphi)$ of a Lie group $\G$  defines a representation of its Lie algebra $\mathfrak{g}$, 
			i.e. we have a Lie algebra homomorphism $\varphi_* : \mathfrak{g} \rightarrow \mathrm{End}(V)$, where  $\varphi_*$ is the differential of 
			the representation $\varphi$. This is a 1-1-correspondence in case of simply connected Lie groups.
			A representation $(V, \varphi)$ of a Lie group $\G$ is also called a {\it $\G$-module}. Then $\G$-invariant
			subspaces of $V$ are $\G$-submodules and $\G$-equivariant homomorphisms  are $\G$-module morphisms.

			We fix a maximal torus $T \subset \G$ with Lie algebra $\mathfrak{t} \subset \mathfrak{g}$. 
			Then the restriction to the maximal torus defines a representation of $T$. Since $T$ is abelian it decomposes into a sum of $1$-dimensional 
			complex representations, the simultaneous eigenspaces of the $T$ action on $V$. The {\it weight vectors} of the
			$\G$-module $V$ are the simultaneous eigenvectors of this restricted $T$-representation. 
			The  {\it weights} of  $V$ are the corresponding eigenvalues considered as $1$-forms on the Lie algebra $\mathfrak{t}$. 
			They span a lattice in the Lie algebra $\mathfrak{t}^*$.
			More explicitly, a vector $v\in V$ is a weight vector of weight $\omega$ if and only if $\varphi_*(X)v = 2\pi i \omega(X) v$ 
			for all $X \in \mathfrak{t}$.  A {\it weight space} is the subspace of weight vectors for a fixed weight. These are
			exactly the isotypical components of the $T$-representation on $V$. 
			
			As an example we have the  {\it roots}  of the Lie algebra $\mathfrak{g}$, which are the non-zero weights of the 
			adjoint representation of $\G$ on the complexification $\mathfrak{g}^\C$ of its Lie algebra. Fixing a basis of simple roots distinguishes a Weyl chamber and  defines a partial 
			ordering on the weight lattice. For the classical groups, in particular for $\SO(n)$, we will use the basis of
			simple roots as given in Bourbaki \cite{bourbaki89}. In the latter case, identifying $\mathfrak{t}$ with $\mathbb{R}^{[\frac{n}{2}]}$, we obtain
			the lexicographic order on $\mathbb{R}^{[\frac{n}{2}]}$.
			
			By the {\it Theorem of the highest weight} any irreducible unitary $\G$-module has a {\it highest weight},
			characterizing the  $\G$-module up to equivalence, i.e. up to $\G$-module  isomorphisms. The corresponding weight space is $1$-dimensional, i.e.
			the corresponding highest weight vector is unique up to scalar multiples. In this sense  irreducible unitary $\G$-module are
			parametrized by their highest weights and we will write $V= V_\rho$ if the $V$ has highest weight $\rho$.
			The set of all equivalence classes of irreducible, finite-dimensional unitary $\G$-modules  is denoted as  $\hat \G$ 
			and, identifying  irreducible unitary $G$-module with their highest weight, we will  also write $\rho \in \hat \G$.
			
			The {\it fundamental weights} $\omega_1, \ldots, \omega_n$ are defined as a basis of $\mathfrak{t}$ dual  to the set of coroots associated to the simple roots. 
			That is, the fundamental weights are defined by the condition:
			$$
			2 \, \frac{\langle \omega_i, \alpha_j \rangle}{\langle \alpha_j, \alpha_j \rangle} \,=\, \delta_{ij} ,
			$$
			where $\alpha_1, \ldots, \alpha_n$ are the simple roots. The fundamental representations, i.e. the corresponding $\G$-modules of highest
			weights $\omega_i, i=1, \ldots, n$, generate the representation ring of $\G$.
			
			Finally, we mention the following facts about weights that we will need in this work: 
			
			\begin{enumerate}
				\item 
				Every $\G$-module has an orthogonal basis of weight vectors.
				\item
				The span of the $\G$-orbit of a highest weight vector is irreducible.
				\item
				The tensor product $V_\rho \otimes V_\mu$ contains  $V_{\rho + \mu}$ as $\G$-submodule of multiplicity one. 
				The corresponding highest weight vector is exactly the tensor product of the two highest weight vectors of the two factors.
			\end{enumerate}
		
		\bibliography{IndexQuarticreferences}
			
		\end{document}